\newtheorem{thm}{Theorem}[section]
\newtheorem{lem}{Lemma}[section]
\newtheorem{prop}{Proposition}[section]
\theoremstyle{definition}
\newtheorem{defn}{Definition}[section]
\theoremstyle{remark}
\newtheorem{rem}{Remark}[section]
\numberwithin{equation}{section}
\newcommand{\supp}{{\rm supp}}
\newcommand{\rmd}{\mathrm {d}}
\newcommand{\bu}{\mathbf{u}}
\newcommand{\bv}{\mathbf{v}}
\newcommand{\bw}{\mathbf{w}}
\newcommand{\bff}{\mathbf{f}}
\newcommand{\bx}{\mathbf{x}}
\newcommand{\by}{\mathbf{y}}
\newcommand{\bd}{\mathbf{d}}
\newcommand{\bp}{\mathbf{p}}
\newcommand{\bH}{\mathbf{H}}
\newcommand{\bQ}{\mathbf{Q}}
\newcommand{\bg}{\mathbf{g}}
\newcommand{\bh}{\mathbf{h}}
\newcommand{\bq}{\mathbf{q}}
\newcommand{\bW}{\mathbf{W}}
\newcommand{\bV}{\mathbf{V}}
\newcommand{\bU}{\mathbf{U}}
\newcommand{\ba}{\mathbf{a}}
\newcommand{\bb}{\mathbf{b}}
\newcommand{\bc}{\mathbf{c}}
\newcommand{\bpsi}{\bm{\psi}}
\newcommand{\bxi}{\bm{\xi}}
\newcommand{\bnu}{\bm{\nu}}
\newcommand{\blambda}{\bm{\lambda}}
\newcommand{\bPhi}{\bm{\Phi}}
\newcommand{\bE}{\mathbf{E}}
\newcommand{\bJ}{\mathbf{J}}
\newcommand{\beq}{\begin{equation}}
\newcommand{\eeq}{\end{equation}}
\title[Effective medium theory for electromagnetic scattering]
{Effective medium theory for embedded obstacles in electromagnetic scattering with applications}
\author{Huaian Diao}
\address{School of Mathematics, Jilin University, Changchun, Jilin, China.}
\email{diao@jlu.edu.cn, hadiao@gmail.com}
\author{Hongyu Liu}
\address{Department of Mathematics, City University of Hong Kong, Kowloon Tong, Hong Kong SAR, China.}
\email{hongyu.liuip@gmail.com; hongyliu@cityu.edu.hk}
\author{Qingle Meng}
\address{Department of Mathematics, City University of Hong Kong, Kowloon Tong, Hong Kong SAR, China.}
\email{mengql2021@foxmail.com; qinmeng@cityu.edu.hk}
\author{Li Wang}
\address{Department of Mathematics, City University of Hong Kong, Kowloon, Hong Kong SAR, China.}
\email{lwang637-c@my.cityu.edu.hk}
\begin{document}
\maketitle
\begin{abstract}

This paper focuses on the time-harmonic electromagnetic (EM) scattering problem in a general medium which may possess a nontrivial topological structure. We model this by an inhomogeneous and possibly anisotropic medium with embedded obstacles and the EM waves cannot penetrate inside the obstacles. Such a situation naturally arises in studying inverse EM scattering problems from complex mediums with partial boundary measurements, or inverse problems from EM mediums with metal inclusions. We develop a novel theoretical framework by showing that the embedded obstacles can be effectively approximated by a certain isotropic medium with a specific choice of material parameters. We derive sharp estimates to verify this effective approximation and also discuss the practical implications of our results to the inverse problems mentioned above, which are longstanding topics in inverse scattering theory.

\medskip

\medskip

\noindent{\bf Keywords:}~~ anisotropic mediums; electromagnetic scattering; embedded obstacle; effective medium theory; variational analysis; inverse problem

\noindent{\bf 2020 Mathematics Subject Classification:}~~35B34; 74E99; 78A46  

\end{abstract}

\section{Introduction}

\subsection{Mathematical setup}\label{sub:MS}

Initially, our focus lies on establishing the mathematical framework for our subsequent study. Specifically, we are interested in the time-harmonic electromagnetic (EM) scattering problem occurring within a general medium, which might exhibit a nontrivial topological structure; see Fig.~\ref{fig:1} for a schematic illustration.
\begin{figure}[t]
\centering
    {\includegraphics[width=0.4\textwidth]{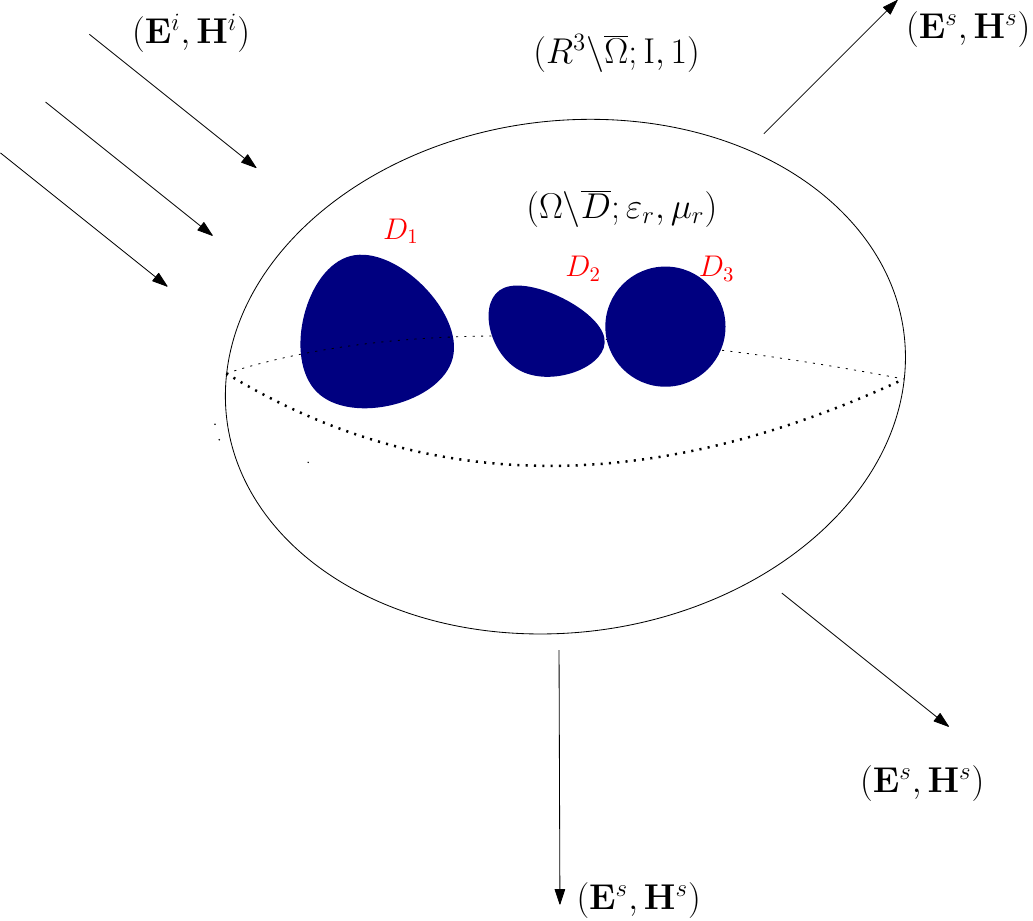}}
    \caption{Schematic illustration of the anisotropic electromagnetic scattering problem with multiple embedded obstacles. \label{fig:1}  }
\end{figure}
Let $\Omega$ be bounded Lipschitz domains such that $\mathbb{R}^3\backslash\overline{\Omega}$  is connected. Suppose that $D\Subset \Omega$ represents the impenetrable obstacle such that $\Omega\backslash\overline{D}$ is connected, which could consist of a finite number of pairwise disjoint obstacles $D_l$ ($l=1,\ldots,N$, $N\in \mathbb N$). Namely, $D=\cup_{l=1}^N D_l$ and $D_i\cap D_j=\emptyset $ for $i\neq j$ and $i,j\in \{1,\ldots,N\}$. The medium configuration is characterised by the relative permittivity $\varepsilon_r(\bx)$ and the relative permeability $\mu_r(\bx)$, which are defined as follows:
$$
\varepsilon_r=\frac{\varepsilon+\mathrm{i}\sigma/\omega}{\varepsilon_0}\quad\mbox{and}\quad \mu_r=\frac{\mu}{\mu_0}\quad\mbox{for}\quad \bx\in\mathbb{R}^3\backslash\overline{D}.
$$
In the above expressions, $\omega$ denotes the temporal frequency of the electromagnetic waves,  $\varepsilon_0$ and $\mu_0$ are positive constants representing the electric permittivity and magnetic permeability of the medium in $\mathbb{R}^3\backslash\overline{\Omega}$, respectively. Furthermore, $\varepsilon$, $\mu$, and $\sigma$ correspond to the electric permittivity, magnetic permeability, and electric conductivity of the possibly anisotropic medium in $\Omega\backslash\overline{D}$, respectively. Here, $\varepsilon$ and $\sigma$ are $3 \times 3$ matrix-valued functions, and $\mu$ is a scalar function. We assume that $\mu_r(\bx)$ belongs to $L^\infty(\Omega\backslash\overline{D})$ and $\varepsilon_r\in L^\infty(\Omega\backslash\overline{D})^{3\times 3}$ is symmetric, and satisfy
\begin{align}\label{ineq:gamma}
0<\gamma_1\leq \mu_r\leq\gamma_2,\quad \overline{\bxi}\cdot\Re \varepsilon_r\,\bxi\geq\gamma|\bxi|^2>0\quad\mbox{and}\quad {\bxi}\cdot\Im \varepsilon_r\,\bxi\geq 0
\end{align}
for all $\bx \in \Omega\backslash\overline{D}$ and all $\bxi\in \mathbb{C}^3$, where $\gamma_1$,$\gamma_2$ and $\gamma$ are positive constants.

 In this paper, we take the time-harmonic electromagnetic incident fields of the form
\begin{align*}
\bE^i=\bp \, e^{\mathrm{i}k\bx\cdot\bd}\quad\mbox{and}\quad
 \bH^i=(\bd\times\bp)\,e^{\mathrm{i}k\bx\cdot\bd}\quad \mbox{for}\quad \bx\in\mathbb{R}^3,
 \end{align*}
 where $\bd\in\mathbb{S}^2$ is the propagation direction, $\mathrm{i}:=\sqrt{-1}$, $\bp\in\mathbb{S}^2$ is the polarization vector satisfying $\bp\cdot\bd=0$, and the wave number $k:=\sqrt{\mu_0\varepsilon_0}\,\omega$ is positive.
 Due to the presence of the complex scatterer, denoted by $D\oplus(\Omega\backslash
\overline{D}; \mu_r, \varepsilon_r)$, the electric scattered field denoted by $\bE^s$ and the magnetic scattered field denoted by $\bH^s$ characterize the perturbation of the propagation of the incident fields. We next consider the following EM scattering problem:
\begin{equation}\label{eq:sca1}
\begin{cases}
\nabla\times \bE-\mathrm{i}k\mu_r\bH={\bf 0},\quad \nabla\times \bH+\mathrm{i}k\varepsilon_r\bE={\bf0}&\mbox{in}\ \ \Omega\backslash\overline{D},\medskip\\
\nabla\times \bE^s-\mathrm{i}k\bH^s={\bf 0},\,\quad \nabla\times \bH^s+\mathrm{i}k\bE^s={\bJ}& \mbox{in}\ \ \mathbb{R}^3\backslash\overline{\Omega},\medskip\\\
\bnu\times \bE\big|_{\partial \Omega}=\bnu\times \bE^s\big|_{\partial \Omega}+\bnu\times \bE^{i}\big|_{\partial \Omega}&\mbox{on}\ \ \partial{\Omega},\\
\bnu\times \bH\big|_{\partial \Omega}=\bnu\times \bH^s\big|_{\partial \Omega}+\bnu\times \bH^{i}\big|_{\partial \Omega}&\mbox{on}\ \ \partial{\Omega},\\
\mathfrak{B}(\bE,\bH)={\bf0} & \mbox{on}\ \ \partial D,\medskip\\
\lim\limits_{r \to\infty}r\big(\bE^s \times\hat{\bx}+\bH^s\big)={\bf 0},
\end{cases}
\end{equation}
where $\bJ$ is the given applied current density, and $\mathfrak{B}(\mathbf{E},\mathbf{H}) =:\bnu \times \mathbf{E}$ or $\mathfrak{B}(\mathbf{E},\mathbf{H}) =: \bnu\times \mathbf{H}$, corresponding to the perfectly electrically conducting (PEC) or perfectly magnetically conducting (PMC) obstacle.
The last limit in (\ref{eq:sca1}) is known as the Silver-M\"{u}ller radiation condition, which describes that the scattered fields are outgoing. Based on the radiation condition mentioned above, the electric scattered fields $\bE^s\in H_{\mathrm{loc}}^1(\mathrm{curl}, \mathbb{R}^3\backslash\overline{\Omega})$ and $\bH^s\in H_{\mathrm{loc}}^1(\mathrm{curl},\mathbb{R}^3\backslash\overline{\Omega})$ admit the following asymptotic behaviour (cf. \cite{Colton2013,Monk2002}):
\begin{align*}
\bQ^s(\bx,\bd,\bp)=\frac{e^{\mathrm{i}k|\bx|}}{|\bx|}\left\{\bQ^\infty(\hat{\bx},\bd,\bp) +\mathcal{O}\big(\frac{1}{|\bx|^2}\big) \right\}\quad \mbox{as}\quad |\bx|\rightarrow\infty, \quad\bQ=\bE,\bH
\end{align*}
uniformly in all directions $\hat{\bx}:=\frac{\bx}{|\bx|}\in \mathbb{S}^2$. $\bE^{\infty}(\hat{\bx},\bd,\bp)$ and $\bH^{\infty}(\hat{\bx},\bd,\bp)$ are known as the electric and magnetic far-field patterns,
corresponding to $\bE^{s}$ and $\bH^{s}$ respectively, which are analytic functions on $\mathbb{S}^2$ and admit the relations (cf.\cite{Colton2013})
$$
\bH^{\infty}(\hat{\bx},\bd,\bp)=\hat{\bx}\times\bE^{\infty}(\hat{\bx},\bd,\bp),\,\,\hat{\bx}\cdot\bE^{\infty}(\hat{\bx},\bd,\bp)=0,\,\,\hat{\bx}\cdot\bH^{\infty}(\hat{\bx},\bd,\bp)=0.
$$


\subsection{Motivation and background}
This paper aims to investigate the time-harmonic electromagnetic (EM) scattering problem in a general medium, which may exhibit a nontrivial topological structure, motivated by two challenging inverse EM scattering problems. Here, we introduce the first inverse EM scattering problem associated with \eqref{eq:sca1}, which involves recovering the properties of a complex scatterer, denoted by $D\oplus(\Omega\backslash\overline{D}; \mu_r, \varepsilon_r)$, using far-field data $\mathbf{E}^{\infty}$ and $\mathbf{H}^{\infty}$. The formulation of this inverse problem is presented as follows:
\begin{equation}\label{eq:ip0 intr}
\bE^\infty\big(\hat{\bx}; \bE^i,\bH^i,\,\bJ, D\oplus(\Omega\backslash\overline{D}; \mu_r, \varepsilon_r) \big)\rightarrow D\oplus(\Omega\backslash\overline{D}; \mu_r, \varepsilon_r).
\end{equation}
It is evident that the inverse problem \eqref{eq:ip0 intr} is nonlinear and ill-posed. Extensive research has been conducted on scenarios involving background media without obstacles, as well as isotropic background media with embedded obstacles; for example, see \cite{Haher1998, Hahenr93, Hettlich1996, Cakoni2004, Colton2013, Cakoni2003,Yang2018,LZY2010}.
However, relatively few results address the inverse electromagnetic scattering problem in an anisotropic medium with an embedded obstacle. The presence of impenetrable obstacles introduces additional challenges, but some progress has been made in this area. Recent studies \cite{LL17, Liu2012, DLL} have focused on similar inverse scattering problems in the context of acoustic and elastic scattering in anisotropic media, aiming to reconstruct both the background medium and its embedded obstacles through multiple measurements. Additionally, \cite{Meng2022} explores time-harmonic elastic scattering in a general anisotropic inhomogeneous medium with an impenetrable obstacle, demonstrating that, by selecting specific material parameters, an isotropic elastic medium can effectively approximate the impenetrable obstacle. These findings align with  \cite{ABCRV2022}, which address the optimization problem using a phase-field approach to reconstruct the cavities.

Another type of inverse EM problem involves the inverse boundary value problem with partial data. Consider the following boundary problem:
\begin{equation}\label{Bp}
\begin{cases}
\nabla\times \bE-\mathrm{i}\omega\mu\bH={\bf 0}&\mbox{in} \quad\Sigma,\\
\nabla\times \bH+(\mathrm{i}\omega\varepsilon-\sigma)\bE={\bf 0}&\mbox{in}\quad \Sigma,\\
\bnu\times\bH=\ba &\mbox{on}\,\,\, \partial \Sigma
\end{cases}
\end{equation}
for $\bE,\bH\in H^1(\mathrm{curl},\Sigma)$, where $\Sigma\Subset \mathbb{R}^3$ is a bounded Lipschitz domain and $\bnu$ is the exterior unit normal vector to $\partial \Sigma$. The problem (\ref{Bp}) exhibits a unique solution $(\mathbf{E},\mathbf{H})\in H^1(\mathrm{curl},\Sigma)$ for $\omega>0$ outside a discrete set of resonant frequencies. The impedance map $\Lambda _{\sigma,\varepsilon,\mu}$ is defined as follows:
$$
\Lambda _{\sigma,\varepsilon,\mu}:\ba\longmapsto\bnu\times \bE\big|_{\partial\Sigma},
$$
which encodes all possible Cauchy data $(\Lambda _{\sigma,\varepsilon,\mu}(\mathbf{\nu}\times\mathbf{H}),\mathbf{\nu}\times\mathbf{H})$ associated with the problem \eqref{Bp}. The inverse problem of interest is formulated as:
\begin{equation}\label{ibp}
\Lambda _{\sigma,\varepsilon,\mu}\longrightarrow (\Sigma;\sigma,\varepsilon,\mu).
\end{equation}
This inverse problem is nonlinear and ill-conditioned, and it has been extensively investigated; see e.g., \cite{COS2009,SU1992} and the related literature cited therein. In practical scenarios, the measurement of data on all boundaries can present challenges, especially when certain boundaries are inaccessible. This limitation is particularly evident in boundary value problems that involve embedded obstacles or holes, where direct measurements of the inner boundary of the medium are not feasible. Therefore, the inverse boundary problem by knowledge of partial boundary measurements is introduced
\begin{equation}\label{ibp'}
\Lambda^p _{\sigma,\varepsilon,\mu}\big|_\Gamma\longrightarrow (\Sigma;\sigma,\varepsilon,\mu),
\end{equation}
where $\Lambda^p_{\sigma,\varepsilon,\mu}:\ba\big|_\Gamma\longmapsto\bnu\times \bE\big|_{\Gamma}$ represents the partial impedance map associated with measurements on a subset $\Gamma\Subset\partial\Omega$. The inverse boundary problems of buried obstacles belong to a specific category of partial-data inverse boundary problems. In this case, we have $\Sigma=\Omega\backslash\overline{D}$ and $\Gamma=\partial \Omega$.
The partial-data inverse problem is widely recognized as an exceptionally challenging research area, and its resolution remains largely elusive, even for the classical Calder\'{o}n inverse conductivity problem $\nabla\cdot(\sigma\nabla u)=0$ (\cite{IUY10, KSU}), where $\sigma$ is a scalar function. This holds true, particularly for the case where $\Sigma=\Omega\backslash\overline{D}$ and $\mathrm{supp}(\ba)\Subset \partial \Omega$, as in \eqref{Bp}. Notable progress has been made in recent years to address the partial-data inverse EM  boundary value problem in both isotropic mediums \cite{COS2009, BMR2014} and anisotropic mediums \cite{KMU2011}. It is important to point out that the material parameters considered for the anisotropic case are sufficiently smooth.

The relationship between the two inverse problems \eqref{eq:sca1} and \eqref{Bp} becomes apparent when we consider the scenario where $\Sigma=\Omega\backslash\overline{D}$ and $\mathrm{supp}(\ba)\Subset \partial \Omega$. In such cases, by introducing  an appropriate truncation to confine the unbounded domain $\mathbb{R}^3\backslash\overline{D}$ into a bounded one, these two problems can be regarded as equivalent. It is worth highlighting that the scattering model \eqref{eq:sca1} is more relevant in the context of the inverse EM scattering problem we are interested in. The mathematical argument associated with an effective medium theory for \eqref{eq:sca1} is technically more involved than associated with  \eqref{Bp}. In this paper, our main focus is devoted to investigating the effective medium theory specifically for \eqref{eq:sca1}.

\subsection{Technical development and discussion}\label{sub2}
In this study, we consider the surrounding medium to be anisotropic and nonhomogeneous, allowing the electric permittivity and conductivity of the underlying medium to be matrix-valued functions with respect to the spatial variable $\mathbf{x}$, while maintaining constant and positive values for those of the medium exterior. Although a positive constant is typically required for the magnetic permeability in cases involving anisotropic electromagnetic materials, we permit it to be a scalar function of the spatial variable $\bx$. Constructing and analyzing a fundamental solution to Maxwell's equations for anisotropic media can be challenging, especially in the presence of impenetrable obstacles. Consequently, methods utilizing fundamental solutions for Maxwell's equations are unsuitable for studying the well-posedness of the scattering problem of multi-layered electromagnetic media with embedded obstacles. In this paper, we propose a different perspective for simultaneously recovering the buried obstacles and the surrounding medium by exploring the effective medium theory. Following a similar approach used in \cite{Meng2022}, we introduce a definition of an effective $\delta$-realization of the complex scatterer, which includes the impenetrable obstacle, the surrounding anisotropic medium, and their corresponding electric and magnetic parameters. Similar to \cite{Meng2022}, we demonstrate that an isotropic electromagnetic medium can effectively approximate the impenetrable obstacle for two different boundary conditions by selecting specific parameters. Our results remain applicable even when considering a more complex obstacle $D$, comprising multiple pairwise disjoint PMC or PEC obstacles.
Transitioning from elastic effective medium theory to EM effective medium theory is not a straightforward generalization. Transitioning from elastic effective medium theory to EM effective medium theory is not a straightforward generalization. The primary challenge arises from the significant disparity in the Sobolev space compact embedding requirements for analyzing electromagnetic scattering problems, which exhibit relatively constrained properties; further details are provided in Subsection \ref{Preliminary}.

As the far-field patterns of electric and magnetic fields can be represented interchangeably, we first focus exclusively on the inverse problem \eqref{eq:ip0 intr}, which specifically addresses a single PEC or PMC obstacle, a topic that has garnered considerable attention in the field. The standard approach to solving \eqref{eq:ip0 intr} involves transforming it into a minimization problem aimed at minimizing the error between the actual and measured far-field data for a priori  class of admissible scatterers. Our main result, as presented in this paper, reformulates the aforementioned optimization problem into the following form:
\begin{equation*}
\min_{(\Omega; \widetilde{\mu}_r, \widetilde{\varepsilon}_r)\in\mathscr{Q}}\big\|\bE^\infty\big(\hat{\bx}; \bE^i,\bJ, (\Omega; \widetilde{\mu}_r, \widetilde{\varepsilon}_r) \big)-\mathcal{F}(D\oplus(\Omega\backslash\overline{D}; \mu_r, \varepsilon_r)) \big\|_{L(\mathbb{S}^{2})^3}.
\end{equation*}
Here, $\mathscr{Q}$ represents the a-priori class of admissible scatterers, while $\mathcal{F}$ denotes the set of measured far-field data. In such cases, the obstacle $D$ can effectively and approximately be replaced by a penetrable medium with specific isotropic electromagnetic medium parameters.  It is worth noting that our results provide a theoretical basis for the rationality of transforming the original problem into the mentioned optimization problem. By solving this optimization problem, we can obtain an initial estimate of the effective realization medium $\Omega$ to solve the original optimization problem.
Furthermore, the distinct singular behaviors exhibited by the material parameters enable us to identify the type of obstacles involved. Specifically, we can analyze these behaviors to determine whether the obstacle $D$ consists of multiple pairwise non-intersecting PEC or PMC obstacles. In such cases, we retain confidence that the previously described procedure remains applicable. However, it is important to note that the processing required in these situations is relatively more complex compared to dealing with a single obstacle.

The rest of the paper is organized as follows. In Section \ref{Main result}, we mainly give the main theorems. Section \ref{sect:preliminary} recalls some significant spaces and operators, and gives auxiliary results. The related results about the effective medium scattering problems are established in Section \ref{sect:medium}. Finally, in Section \ref{sect:proof}, we provide the proofs of Theorem \ref{thm:main1} and Theorem \ref{thm:main2}.

\section{Statement of main results}\label{Main result}
\subsection{Main result for  the anisotropic electromagnetic scattering problem with a single  embedded obstacle}\label{sub:single}
In this subsection, we first consider a simple scenario involving only one obstacle. The following subsection \ref{sub:multiple}, shall provide a more detailed discussion of the situation when there are multiple obstacles.
This subsection aims to present a significant theorem that establishes the existence of approximate effective realizations for a single embedded obstacle in the time-harmonic electromagnetic problem \eqref{eq:sca1}. Based on the discussion in subsection \ref{sub:MS}, it is observed that the far-field patterns of the electric and magnetic fields can be represented interchangeably. Our primary focus is deriving specific results for the associated far-field pattern $\bE^\infty$. Similarly, we can obtain analogous results for the magnetic case using a similar approach. To end this, we reformulate the system (\ref{eq:sca1}) by eliminating the magnetic fields $\bH$ and $\bH^{s}$ as follows:
\begin{equation*}
\begin{cases}
\nabla\times(\mu^{-1}_r\nabla\times \bE)-k^2\varepsilon_r\bE={\bf 0}&\mbox{in}\ \ \Omega\backslash\overline{D},\medskip\\
\nabla\times\nabla\times \bE^s-k^2\bE^s=\bJ& \mbox{in}\ \ \mathbb{R}^3\backslash\overline{\Omega},\medskip\\
\bnu\times \bE\big|_{\partial \Omega}=\bnu\times \bE^s\big|_{\partial \Omega}+\bnu\times\bE^i\big|_{\partial \Omega} &\mbox{on}\ \  \partial{\Omega},\\
\bnu\times (\mu^{-1}_r\nabla\times\bE)\big|_{\partial \Omega}=\bnu\times(\nabla\times \bE^s)\big|_{\partial \Omega}+\bnu\times(\nabla\times\bE^i)\big|_{\partial \Omega} &\mbox{on}\ \  \partial{\Omega},\medskip\\
\mathfrak{B}(\bE)={\bf0} & \mbox{on}\ \ \partial D,\medskip\\
\lim\limits_{r \to\infty}r\big( \nabla\times\bE^s-\mathrm{i}k\bE^s \big)={\bf 0},
\end{cases}
\end{equation*}
where $\mathfrak{B}(\bE)=\bnu\times \bE$ or $\mathfrak{B}(\bE)=\bnu\times\big(\mu^{-1}_r\nabla\times \bE\big)=\bf{0}$, corresponding to whether the obstacle $D$ is a PEC obstacle or a PMC obstacle. In this way, the inverse problem \eqref{eq:ip0 intr} reduces into
\begin{equation}\label{eq:ip1}
\bE^\infty\big(\hat{\bx}; \bE^i,\bJ, D\oplus(\Omega\backslash\overline{D}; \mu_r, \varepsilon_r) \big)\rightarrow D\oplus(\Omega\backslash\overline{D}; \mu_r, \varepsilon_r).
\end{equation}

It is obvious that the inverse problem \eqref{eq:ip1} is nonlinear and ill-posed. Since the presence of the obstacle $D$ inside the scatterer $\Omega$ poses practical challenges in solving \eqref{eq:ip1}. To overcome this issue, we introduce the following definition, which outlines an effective approximation of $D\oplus(\Omega\backslash\overline{D}; \mu_r, \varepsilon_r)$ by utilizing a penetrable scatterer $D$ with specific material parameters.



\begin{defn}\label{def:2}
Consider the complex scatterer $D\oplus(\Omega\backslash\overline{D}; \mu_r, \varepsilon_r)$ as described above. Let $\bE^\infty\big(\hat{\bx}; \bE^i, \bJ, D\oplus(\Omega\backslash
\overline{D}; \mu_r, \varepsilon_r) \big)$ be the far-field pattern corresponding to \eqref{eq:sca1}. 
For sufficiently small $\delta \in \mathbb{R}_+$, suppose there exists a medium configuration denoted by $(\Omega; \widetilde{\mu}_r, \widetilde{\varepsilon}_r)$ such that
$$
(\widetilde{\mu}_r, \widetilde{\varepsilon}_r)\big|_{\Omega\backslash\overline{D}}=(\mu_r, \varepsilon_r)\big|_{\Omega\backslash\overline{D}}, \quad (\widetilde{\mu}_r, \widetilde{\varepsilon}_r)\big|_{D}=(\mu_D, \varepsilon_D),
$$
 where  $\varepsilon_D$ and $\mu_D$  are relative electronic permittivity and magnetic permeability in $D$, respectively. Additionally, consider the corresponding far-field pattern $\bE^\infty(\hat{\bx}; \bE^i, \bJ, (\Omega; \widetilde{\mu}_r, \widetilde{\varepsilon}_r) )$ of the medium scattering problem \eqref{MD:tran2-2} associated with the medium $(\Omega; \widetilde{\mu}_r, \widetilde{\varepsilon}_r)$. If this far-field pattern satisfies the following inequality:
\begin{equation*}
\begin{split}
&\left\|\bE^\infty\left(\hat{\bx}; \bE^i, \bJ, (\Omega; \widetilde{\mu}_r, \widetilde{\varepsilon}_r) \right)-\bE^\infty\left(\hat{\bx}; \bE^i, \bJ, D\oplus(\Omega\backslash\overline{D}; \mu_r, \varepsilon_r)\right ) \right\|_{L(\mathbb{S}^{2})^3} \\
 &\leq C \delta\left(\|\bE^i\|_{H^1(\mathrm{curl},B_R)}+\|\nabla\times\bE^i\|_{H^1(\mathrm{curl},B_R)}+\|\bJ\|_{L^2(B_R)^3}\right),
\end{split}
\end{equation*}
where $B_R$ is any given central ball containing $\Omega$ and $C$ is a generic positive constant depending on the prior parameters, then $(\Omega; \widetilde{\mu}_r, \widetilde{\varepsilon}_r)$ is considered to be an effective $\delta$-realization of $D\oplus(\Omega\backslash\overline{D}; \mu_r, \varepsilon_r)$. Furthermore, if $\delta\equiv 0$, then $(\Omega; \widetilde{\mu}_r, \widetilde{\varepsilon}_r)$ is considered an effective realization of $D\oplus(\Omega\backslash\overline{D}; \mu_r, \varepsilon_r)$. In brief, $(D; \widetilde{\mu}_r, \widetilde{\varepsilon}_r)$ is also referred to as an effective $\delta$-realization of the obstacle $D$.
\end{defn}


\begin{rem}
Here $\bE^\infty(\hat{\bx}; \bE^i, \bJ, (\Omega; \widetilde{\mu}_r, \widetilde{\varepsilon}_r) )$
is the far-field pattern associated with the scattering problem \eqref{eq:sca1}, where the impenetrable obstacle $D\Subset \Omega$ is replaced by a penetrable scatterer $D$  with the medium  configuration of  $(\widetilde{\mu}_r,\widetilde{\varepsilon}_r)\big|_{{D}}$ and the corresponding configuration of  $\Omega \backslash \overline D $ is described  by $(\mu_r, \varepsilon_r)\big|_{\Omega\backslash\overline{D}}$. If $\delta\equiv 0$, then $(\Omega; \widetilde{\mu}_r, \widetilde{\varepsilon}_r)$ is said to be an effective realization of $D\oplus(\Omega\backslash\overline{D}; \mu_r, \varepsilon_r)$.
\end{rem}

In the upcoming theorem, we present our main result concerning the scattering system \eqref{eq:sca1}, with a specific emphasis on the situation involving one single obstacle.
\begin{thm}\label{thm:main1}
Let $D\oplus(\Omega\backslash\overline{D}; \mu_r, \varepsilon_r)$ be the complex scatterer mentioned above, where $\mu_r(\bx)$ satisfies the conditions in \eqref{ineq:gamma} and $\varepsilon_r(\bx)$  is a complex matrix-valued function whose real part and imaginary parts  $\Re\varepsilon_r$ and $\Im\varepsilon_r$ satisfy the conditions in \eqref{ineq:gamma}. These parameters $\mu_r$ and $\varepsilon_r$ can be extended to $\mathbb{R}^3\backslash\overline{D}$ by setting $\mu_r(\bx)=1$ and $\varepsilon_r(\bx)=I$ in $\mathbb R^3\backslash \overline \Omega$, where $I$ is the $3\times 3$ identity matrix. Considering the medium scattering problem \eqref{MD:tran2-2} associated with the medium scatterer $(\Omega; \widetilde{\mu}_r, \widetilde{\varepsilon}_r)$, where $(\widetilde{\mu}_r, \widetilde{\varepsilon}_r)\big|_{\Omega\backslash\overline{D}}=(\mu_r, \varepsilon_r)\big|_{\Omega\backslash\overline{D}}$ and $(\widetilde{\mu}_r, \widetilde{\varepsilon}_r)\big|_{D}=(\mu_D, \varepsilon_D)$. Here $\varepsilon_D$ and $\mu_D$  are the relative electronic permittivity and magnetic permeability respectively. Additionally, let $\eta_0$ and $\tau_0$ be positive constants. For $\delta\in\mathbb{R}_+$ and $\delta\ll 1$,
\begin{itemize}
	\item Case 1: If the obstacle $D$ is a PMC obstacle, then $\mu_D$ and $\varepsilon_D$  can be chosen as
\begin{equation}\label{eq:eff2}
\mu_D=\delta^{-1}\,\,\mbox{and} \,\, \varepsilon_D=\eta_0+\mathrm{i}\tau_0;
\end{equation}
such that $(D; \mu_D, \varepsilon_D)$ is an $\delta^{1/2}$-realization of the obstacle $D$ in the sense of Definition~\ref{def:2}.
\item  Case 2: If the obstacle $D$ is a PEC obstacle, then $\mu_D$ and $\varepsilon_D$ can be taken  as
\begin{equation}\label{eq:eff2_2}
\mu_D=\delta\,\,\mbox{and} \,\, \varepsilon_D=\eta_0+\mathrm{i}\delta^{-1}\tau_0,
\end{equation}
such that $(D; \mu_D, \varepsilon_D)$ is an $\delta^{1/2}$-realization of the obstacle $D$ in the sense of Definition~\ref{def:2}.
\end{itemize}
\end{thm}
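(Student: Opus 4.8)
The plan is to prove both cases by a variational/energy comparison between the solution $\bE$ of the true scattering problem \eqref{eq:sca1} (with the impenetrable obstacle $D$) and the solution $\widetilde{\bE}$ of the medium scattering problem \eqref{MD:tran2-2} (with $D$ filled by $(\mu_D,\varepsilon_D)$). Setting $\bW:=\widetilde{\bE}-\bE$ in $\Omega\backslash\overline{D}$, the strategy is to show that the difference satisfies a Maxwell-type system whose source and boundary data are controlled by $\delta$, then to translate a bound on $\bW$ (and its curl) near $\partial\Omega$ into a bound on the far-field difference via the continuity of the far-field operator established in Section~\ref{sect:medium}. The central mechanism is that the chosen singular parameters force $\widetilde{\bE}$ (or its curl) to be asymptotically small \emph{inside} $D$, thereby approximately reproducing the homogeneous boundary condition $\mathfrak{B}(\bE)=\mathbf{0}$ on $\partial D$ in the limit $\delta\to 0$.

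First I would write the weak formulation of \eqref{MD:tran2-2} over $\Omega$ (or the truncated ball $B_R$) with test function $\bPhi$, yielding a sesquilinear form involving $\int \widetilde{\mu}_r^{-1}(\nabla\times\widetilde{\bE})\cdot(\nabla\times\overline{\bPhi}) - k^2\int \widetilde{\varepsilon}_r\,\widetilde{\bE}\cdot\overline{\bPhi}$. The key is to test against $\widetilde{\bE}$ itself restricted to $D$ and to exploit \eqref{eq:eff2}/\eqref{eq:eff2_2}. In Case~1 (PMC), $\mu_D=\delta^{-1}$ makes the coefficient $\widetilde{\mu}_r^{-1}=\delta$ on the curl term in $D$, while $\Im\varepsilon_D=\tau_0>0$ is a fixed positive absorption; the energy identity (taking real/imaginary parts) then gives $\delta\|\nabla\times\widetilde{\bE}\|_{L^2(D)}^2 + \tau_0 k^2\|\widetilde{\bE}\|_{L^2(D)}^2 \lesssim$ (boundary flux through $\partial D$), from which one extracts that $\bnu\times\widetilde{\bE}$ — equivalently $\mathfrak{B}$ for the PMC case written as $\bnu\times(\mu_r^{-1}\nabla\times\bE)$ — behaves like $O(\delta^{1/2})$ on $\partial D$. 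Symmetrically, in Case~2 (PEC), $\mu_D=\delta$ and $\Im\varepsilon_D=\delta^{-1}\tau_0$ reverse the roles: the large imaginary permittivity penalizes $\|\widetilde{\bE}\|_{L^2(D)}^2$ by a factor $\delta^{-1}$, forcing $\bnu\times\widetilde{\bE}\big|_{\partial D}=O(\delta^{1/2})$ and thus approximately recovering the PEC condition $\bnu\times\bE=\mathbf{0}$.

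Next I would set up the difference problem. Since $\widetilde{\bE}$ and $\bE$ solve the same equations in $\Omega\backslash\overline{D}$ and (after truncation) in $B_R\backslash\overline{\Omega}$ with matched transmission data on $\partial\Omega$, the residual $\bW$ solves a homogeneous Maxwell system in $\Omega\backslash\overline{D}$ with radiating behaviour outside $\Omega$, but with nonzero boundary data on $\partial D$ of size $O(\delta^{1/2})$ (measured in the appropriate trace space $H^{-1/2}(\mathrm{div},\partial D)$ or its dual). Invoking the well-posedness and a priori stability estimate for the medium scattering problem — which I would take from the results of Section~\ref{sect:medium}, where the relevant Sobolev compact embeddings and the Fredholm/uniqueness argument are established — I obtain $\|\bW\|_{H(\mathrm{curl},\Omega\backslash\overline{D})}\le C\delta^{1/2}(\cdots)$ with the right-hand side carrying the incident-field and current norms. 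Finally, the mapping from the Cauchy data on $\partial\Omega$ (or on $\partial B_R$) to the far-field pattern $\bE^\infty$ is bounded, so this estimate transfers directly to the claimed far-field bound with the factor $\delta^{1/2}$, matching the $\delta^{1/2}$-realization in Definition~\ref{def:2}.

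The main obstacle I anticipate is the limited compactness of the relevant curl-conforming spaces, flagged explicitly in Subsection~\ref{Preliminary}: unlike the elastic case of \cite{Meng2022}, one cannot freely use a compact $H^1$ embedding, so controlling $\widetilde{\bE}$ on $\partial D$ from the interior energy bound requires a careful trace estimate within $H(\mathrm{curl})$ together with a Helmholtz/Hodge-type decomposition to separate the divergence-free and gradient parts, only one of which is penalized by the singular coefficient. Establishing that the penalized energy genuinely controls the full trace $\bnu\times\widetilde{\bE}$ (and that the unpenalized component does not blow up) — uniformly in $\delta$ — is the technically delicate point, and getting the sharp exponent $\delta^{1/2}$ rather than a weaker power hinges on balancing the two terms in the energy identity precisely as dictated by \eqref{eq:eff2} and \eqref{eq:eff2_2}.
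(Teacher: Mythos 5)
Your overall architecture coincides with the paper's: an energy identity for the medium problem whose real and imaginary parts exploit the scalings \eqref{eq:eff2} and \eqref{eq:eff2_2} to make the relevant trace on $\partial D$ small of order $\delta^{1/2}$ in $H^{-1/2}_{\mathrm{div}}(\partial D)$ (in Case 1 the ``Neumann'' trace, via the transmission condition $\bnu\times(\mu_r^{-1}\nabla\times\bE_\delta)\big|^+_{\partial D}=\delta\,\bnu\times(\nabla\times\bE_\delta)\big|^-_{\partial D}$; in Case 2 the tangential trace $\bnu\times\bE_\delta\big|_{\partial D}$); the difference field then solves the obstacle problem \eqref{MD:tran2} resp.\ \eqref{MD:tran2-2} with only this small datum $\bq$, so the $\delta$-independent stability estimates (Theorems \ref{th:exitence1} and \ref{th:exitence2}, proved exactly through the Calder\'on operator, the Helmholtz decompositions and the compact embeddings you flag) plus the explicit far-field representation yield the $\delta^{1/2}$ bound --- this is precisely the chain Proposition \ref{prop:der_case1} $\rightarrow$ Proposition \ref{diff} $\rightarrow$ Section \ref{sect:proof}.

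There is, however, one genuine gap in your middle step. You assert that the energy identity gives $\delta\|\nabla\times\bE_\delta\|^2_{L^2(D)^3}+\tau_0k^2\|\bE_\delta\|^2_{L^2(D)^3}\lesssim$ (boundary flux through $\partial D$). In fact, when the identities over $D$, $\Omega\backslash\overline{D}$ and $B_R\backslash\overline{\Omega}$ are summed, the fluxes across $\partial D$ \emph{cancel} by the transmission conditions; what survives on the right-hand side (see \eqref{eq:inter4}--\eqref{eq:inter6 2}) are terms on $\partial\Omega$ and $\partial B_R$ involving not only the data $\bE^i,\bJ$ but also the unknown scattered field $\bE^s_\delta$. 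So the interior energy is controlled only in terms of $\|\bE_\delta\|_{H^1(\mathrm{curl},B_R\backslash\overline{D})}$, and the uniformity of that exterior norm in $\delta$ is exactly what must still be proved: the coefficients degenerate as $\delta\to0$, so the Fredholm constants of the medium problem are not a priori $\delta$-uniform, and ``invoking the well-posedness of the medium problem'' is circular at this point. The paper closes the loop with a normalization/contradiction argument (the sequences $(\hat{\bE}_n,\hat{\bE}^i_n,\hat{\bJ}_n)$ in Lemmas \ref{lem:mediu1} and \ref{lem:mediu2}): if the exterior norm blew up, the normalized solution would solve the $\delta$-independent obstacle problem with boundary datum of size $O(\sqrt{\delta})$ --- where, in Case 1, one must also use the equation in $D$, $\nabla\times(\delta\nabla\times\hat{\bE}_n)=k^2\varepsilon_D\hat{\bE}_n$, to control the graph norm needed for the $H^{-1/2}_{\mathrm{div}}(\partial D)$ trace bound --- contradicting $\|\hat{\bE}_n\|_{H^1(\mathrm{curl},B_R\backslash\overline{D})}=1$ via Theorem \ref{th:exitence1} resp.\ Theorem \ref{th:exitence2}. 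Without this uniform exterior estimate (\eqref{inq:esti2_m}, \eqref{inq:esti2_m'}), your extraction of the $O(\delta^{1/2})$ trace on $\partial D$, and hence the bound on the difference field, does not follow from the energy identity alone.
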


\subsection{Main result for the anisotropic electromagnetic scattering problem with multiple embedded obstacles }\label{sub:multiple}
This subsection mainly explores a more complex configuration of $D$ in the electromagnetic scattering problem \eqref{eq:sca1}, where
\begin{equation}\label{eq:M}
D=\bigcup_{l=1}^{N} {D_l},\quad l=1,2,\cdots, N.
\end{equation}
Here, $\big\{D_l\big\}_{l=1}^N$ represents the PMC or PEC obstacles. We assume that for any two distinct components $D_k$ and $D_l$, they satisfy $D_k\cap D_l=\emptyset$ with $k,l=1,2,\cdots,N$, and $\Omega\backslash\overline{D}$ is connected. It is evident that $\partial D\Subset\bigcup_{k=1}^{N}\partial D_k$. Consequently, the electromagnetic scattering problem \eqref{eq:sca1} can be reformulated as follows:
\begin{equation}\label{eq:sca1M}
\begin{cases}
\nabla\times \bE-\mathrm{i}k\mu_r\bH={\bf 0},\quad \nabla\times \bH+\mathrm{i}k\varepsilon_r\bE={\bf0}&\mbox{in}\ \ \Omega\backslash\overline{D},\\
\nabla\times \bE^s-\mathrm{i}k\bH^s={\bf 0},\,\quad \nabla\times \bH^s+\mathrm{i}k\bE^s={\bJ}& \mbox{in}\ \ \mathbb{R}^3\backslash\overline{\Omega},\\
\bnu\times \bE\big|_{\partial \Omega}=\bnu\times \bE^s\big|_{\partial \Omega}+\bnu\times \bE^{i}\big|_{\partial \Omega}&\mbox{on}\ \ \partial{\Omega},\\
\bnu\times \bH\big|_{\partial \Omega}=\bnu\times \bH^s\big|_{\partial \Omega}+\bnu\times \bH^{i}\big|_{\partial \Omega}&\mbox{on}\ \ \partial{\Omega},\medskip\\
\mathfrak{B}(\bE,\bH)\big|_{\partial D_l\cap\partial D}={\bf0} & l=1,2,\cdots,N,\medskip\\
\lim\limits_{r \to\infty}r\big(\bE^s \times\hat{\bx}+\bH^s\big)={\bf 0}.
\end{cases}
\end{equation}
Similarly, eliminating the magnetic fields $\bH$ and $\bH^{s}$, we obtain
\begin{equation*}
\begin{cases}
\nabla\times(\mu^{-1}_r\nabla\times \bE)-k^2\varepsilon_r\bE={\bf 0}&\mbox{in}\ \ \Omega\backslash\overline{D},\\
\nabla\times\nabla\times \bE^s-k^2\bE^s=\bJ& \mbox{in}\ \ \mathbb{R}^3\backslash\overline{\Omega},\\
\bnu\times \bE\big|_{\partial \Omega}=\bnu\times \bE^s\big|_{\partial \Omega}+\bnu\times\bE^i\big|_{\partial \Omega} &\mbox{on}\ \  \partial{\Omega},\medskip\\
\bnu\times (\mu^{-1}_r\nabla\times\bE)\big|_{\partial \Omega}=\bnu\times(\nabla\times \bE^s)\big|_{\partial \Omega}+\bnu\times(\nabla\times\bE^i)\big|_{\partial \Omega} &\mbox{on}\ \  \partial{\Omega},\medskip\\
\mathfrak{B}(\bE)\big|_{\partial D_l\cap\partial D}={\bf0}, & l=1,2,\cdots,N, \medskip\\
\lim\limits_{r \to\infty}r\big( \nabla\times\bE^s-\mathrm{i}k\bE^s \big)={\bf 0}.
\end{cases}
\end{equation*}

We continue to employ the notation $D\oplus(\Omega\backslash\overline{D}; \mu_r, \varepsilon_r)$ to represent a complex scatterer composed of multiple obstacles and their associated material parameters. Moreover, Definition \ref{def:2} can be extended to scenarios that involve multiple obstacles, utilizing the expressions in \eqref{eq:M}. To accommodate such scenarios, the corresponding theorem necessitates appropriate modifications, which are presented in the revised version below.

\begin{thm}\label{thm:main2}

Under the same setup as in Theorem \ref{thm:main1}, let us assume that the parameters $\mu_r$ and $\varepsilon_r$ can be extended to $\mathbb{R}^3\backslash\overline{D}$ by setting $\mu_r(\bx)=1$ and $\varepsilon_r(\bx)=I$ in $\mathbb R^3\backslash \overline{\Omega}$. We consider the medium scattering problem \eqref{MD:tran2-2} associated with the medium scatterer $(\Omega; \widetilde{\mu}_r, \widetilde{\varepsilon}_r)$, where $(\widetilde{\mu}_r, \widetilde{\varepsilon}_r)\big|_{\Omega\backslash\overline{D}}=(\mu_r, \varepsilon_r)\big|_{\Omega\backslash\overline{D}}$ and $(\widetilde{\mu}_r, \widetilde{\varepsilon}_r)\big|_{D}=(\mu_D, \varepsilon_D):=\Big(\sum\limits_{l=1}^N\chi(D_l)\mu_{D_l}, \,\sum\limits_{l=1}^N\chi(D_l)\varepsilon_{D_l}\Big)$, where $\chi$ denotes the characteristic function. If the material parameters $\mu_{D_l}$ and $\varepsilon_{D_l}$ are chosen as in \eqref{eq:eff2} (or \eqref{eq:eff2_2}) for positive constants $\eta^l_0$ and $\tau^l_0$, $l=1,2,\cdots,N$, with $\delta$ being a sufficiently small positive constant, then $(D; \mu_D, \varepsilon_D)$ can be considered as a $\delta^{1/2}$-realization of the complex PMC obstacles $D$ (or PEC obstacles $D$) in the sense of Definition~\ref{def:2}.

\end{thm}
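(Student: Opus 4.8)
The plan is to reduce Theorem~\ref{thm:main2} to the single-obstacle estimate established in Theorem~\ref{thm:main1}, taking advantage of the hypothesis that the components $D_l$ in \eqref{eq:M} are pairwise disjoint. Because the effective parameters are defined componentwise as $(\mu_D,\varepsilon_D)=\big(\sum_{l=1}^N\chi(D_l)\mu_{D_l},\,\sum_{l=1}^N\chi(D_l)\varepsilon_{D_l}\big)$, they agree on each $D_l$ with the single-obstacle choice \eqref{eq:eff2} (PMC) or \eqref{eq:eff2_2} (PEC), and every volume integral over $D$ splits as $\int_D=\sum_{l=1}^N\int_{D_l}$. First I would truncate the exterior to a large ball $B_R\supset\Omega$, encode the Silver--M\"{u}ller condition by a DtN/capacity operator on $\partial B_R$, and write both the impenetrable problem and the penetrable medium problem \eqref{MD:tran2-2} in weak form over $H(\mathrm{curl},B_R)$. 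The sesquilinear forms and the resulting energy identities then decouple across the components, so the single-obstacle mechanism can be applied on each $D_l$ in parallel.

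The heart of the matter is a uniform-in-$\delta$ a priori bound on the penetrable solution $\widetilde{\bE}$ together with a quantitative comparison with the impenetrable solution $\bE$. In the PMC case the coefficient $\mu_{D_l}^{-1}=\delta$ multiplies the curl energy, so testing the weak form with $\widetilde{\bE}$ and taking real parts yields $\delta\sum_{l=1}^N\|\nabla\times\widetilde{\bE}\|_{L^2(D_l)}^2\le C$; hence $\|\nabla\times\widetilde{\bE}\|_{L^2(D)}\le C\delta^{-1/2}$, and in the comparison of the two weak formulations the discrepancy appears as the error term $\delta\int_D(\nabla\times\widetilde{\bE})\cdot(\nabla\times\overline{\bF})$, of size $\delta\cdot\delta^{-1/2}=\delta^{1/2}$, so that the natural PMC condition $\bnu\times(\mu_r^{-1}\nabla\times\bE)={\bf 0}$ is recovered to that order. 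In the PEC case the coefficient $\Im\varepsilon_{D_l}=\delta^{-1}\tau_0^l$ forces, upon taking imaginary parts, $\delta^{-1}\sum_{l=1}^N\tau_0^l\|\widetilde{\bE}\|_{L^2(D_l)}^2\le C$, so $\|\widetilde{\bE}\|_{L^2(D)}\le C\delta^{1/2}$ and the tangential trace $\bnu\times\widetilde{\bE}$ on each $\partial D_l$ tends to zero at rate $\delta^{1/2}$, recovering $\bnu\times\bE={\bf 0}$. Feeding these bounds into the difference estimate for $\widetilde{\bE}-\bE$ in the $H(\mathrm{curl},B_R)$ norm produces the $\delta^{1/2}$ rate; since $N$ is finite, summing over $l$ preserves it, with $C$ now depending additionally on $N$, on $\max_l(\eta_0^l,\tau_0^l)$, and on the geometry of the $D_l$.

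The final step transfers the near-field bound $\|\widetilde{\bE}-\bE\|_{H(\mathrm{curl},B_R)}\le C\delta^{1/2}(\cdots)$ to the far field via the continuous dependence of the far-field pattern on the interior field --- e.g. the Stratton--Chu representation restricted to $\partial\Omega$, which is a bounded map into $L(\mathbb{S}^2)^3$ --- thereby yielding exactly the inequality of Definition~\ref{def:2} with rate $\delta^{1/2}$. I expect the principal obstacle to be not the componentwise bookkeeping but securing the uniform-in-$\delta$ well-posedness and a priori bound for $\widetilde{\bE}$ in the presence of the singular coefficients ($\mu_D^{-1}\to0$ for PMC and $\Im\varepsilon_D\to\infty$ for PEC). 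As stressed in the introduction, the compact embeddings available for $H(\mathrm{curl})$ are much weaker than in the elastic setting, so a Helmholtz--Hodge decomposition is needed to isolate the divergence-free part and apply the Fredholm alternative uniformly; one must moreover exclude $\delta$-dependent spurious resonances generated by the multiple cavities, which is where the connectedness of $\Omega\backslash\overline{D}$ and a non-degenerate coercivity estimate for the exterior form enter. Once this uniform coercivity is in hand, the componentwise energy estimates and their summation proceed exactly as in the proof of Theorem~\ref{thm:main1}.
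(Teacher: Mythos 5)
Your overall architecture coincides with the paper's own proof, which is likewise a reduction to the machinery of Theorem \ref{thm:main1}: componentwise splitting of the volume and boundary integrals over the $D_l$ (the paper's Step 3, including the interface conditions on $\partial D_l\cap\partial D_j$ when closures of distinct components touch, which your ``full decoupling'' silently assumes away), the real/imaginary-part energy identities giving $\|\nabla\times\bE_\delta\|_{L^2(D)}=\mathcal{O}(\delta^{-1/2})$ in the PMC case and $\|\bE_\delta\|_{L^2(D)}=\mathcal{O}(\delta^{1/2})$ in the PEC case, the comparison of the penetrable and impenetrable solutions through the boundary datum $\bnu\times(\mu_r^{-1}\nabla\times\bE_\delta)$ (resp.\ $\bnu\times\bE_\delta$) of size $\delta^{1/2}$, and the Stratton--Chu transfer to the far field. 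Up to that point your proposal and the paper's Steps 1--5 are the same.

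The genuine gap is the uniform-in-$\delta$ a priori bound. Your assertion that testing the weak form with $\widetilde{\bE}$ ``yields $\delta\sum_l\|\nabla\times\widetilde{\bE}\|^2_{L^2(D_l)}\le C$'' is circular as stated: the energy identity (the multi-obstacle analogue of \eqref{eq:inter8 2}) controls the left-hand side by $\|\bE_\delta\|^2_{H^1(\mathrm{curl},B_R\backslash\overline{D})}$ plus data, and the indefinite $-k^2$ volume terms prevent any direct coercivity from making that right-hand side $\delta$-independent; the constant you need is exactly the content of \eqref{inq:esti2_m} (resp.\ \eqref{inq:esti2_m'}). Your proposed fix --- a Helmholtz decomposition plus a Fredholm alternative applied ``uniformly'' to the $\delta$-dependent medium problem, with $\delta$-dependent spurious resonances excluded --- is not how the paper proceeds and, as sketched, would not work: since $\mu_D^{-1}\to 0$ (PMC) or $\Im\varepsilon_D\to\infty$ (PEC), compactness-based Fredholm theory gives invertibility for each fixed $\delta$ but no $\delta$-uniform constant, and excluding the resonances is precisely the unproved assertion. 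The paper instead applies the Fredholm machinery (Lemmas \ref{lem:Helmotz}--\ref{lem:imbed} and their analogues) only to the $\delta$-free impenetrable problems \eqref{MD:tran2} and \eqref{MD:tran2-2}, obtaining the $\delta$-independent stability estimates of Theorems \ref{th:exitence1} and \ref{th:exitence2}, and then extracts the uniform bound on the medium solution by the normalization--contradiction argument of Lemmas \ref{lem:mediu1} and \ref{lem:mediu2}: assuming $\|\bE_n\|_{H^1(\mathrm{curl},B_R\backslash\overline{D})}\to\infty$, one rescales to unit norm with vanishing data, observes that the rescaled field solves the impenetrable problem with boundary datum of size $\sqrt{\delta}$ (Proposition \ref{prop:der_case1} in the PMC case), and contradicts the normalization. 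To close your argument you should replace the ``uniform Fredholm'' step by this contradiction argument, which trades $\delta$-uniformity for the stability of the limiting obstacle problem; the remainder of your proposal then goes through as in Propositions \ref{diff} and the far-field step.
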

The detailed proofs of Theorem \ref{thm:main1} and Theorem \ref{thm:main2} are postponed to Section \ref{sect:proof}.
\section{Auxiliary results}\label{sect:preliminary}
\subsection{Preliminaries}\label{Preliminary}


To precisely describe the mathematical problem addressed throughout this paper, it is necessary to recall the following spaces and operators.  For $s\in \mathbb{R}$, $H^s(D)^3$ and $H^{s}(\partial D)^3$ represent the standard vector Sobolev spaces defined on $D$ and $\partial D$, respectively (with the convention $H^0=L^2$). Here, $D$ represents a Lipschitz domain in $\mathbb R^3$ with $\partial D$ denoting its boundary. Additionally, other vector function spaces are defined as follows:
\begin{align*}
H^1(\mathrm{curl},D):&=\{\bu\in L^2(D)^3|\mathrm{curl}\bu\in  L^2(D)^3\},\\
H^s_t(\partial D)^3:&=\{\bu\in H^s(\partial D)^3|\bnu\cdot\bu=0\},
\end{align*}
where $\mathrm{curl}$ represents the curl operator in the distributional sense, and $\bnu$ denotes the outward unit normal vector to $\partial D$. Similar spaces are defined for other domains in a similar manner. For any ball $B_R$ centered at the origin with radius $R$, we denote these spaces as $H^1(\mathrm{curl}, B_R \cap\mathbb{R}^3\backslash\overline{D})$ by $H^1_{\mathrm{loc}}(\mathbb{R}^3\backslash\overline{D})$. The tangential trace spaces of $H^1(\mathrm{curl}, D)$ are characterized as follows:
\begin{align*}
H^{-1/2}_{\mathrm{div}}(\partial D):&=\big\{\bg\in H^{-1/2}_t(\partial D)^3 \big|\mathrm{div}_{\small{\partial D}}\,\bg\in H^{-1/2}(\partial D)\big\},\\
H^{-1/2}_{\mathrm{curl}}(\partial D):&=\big\{\bg\in H^{-1/2}_t(\partial D)^3 \big|\mathrm{curl}_{\small{\partial D}}\,\bg\in H^{-1/2}(\partial D)\big\},
\end{align*}
where $\mathrm{div}_{\partial D}$ and $\mathrm{curl}_{\partial D}$ represent the surface divergence and surface curl on $\partial D$, respectively. Next, we recall the tangential trace mapping $\gamma_t$ and the tangential projection operator $\gamma_T$, respectively,
\begin{align*}
\gamma_t(\bu):=\bnu\times\bu
\qquad\mbox{and} \qquad\gamma_T(\bu):=\bnu\times(\bu\times\bnu)
\end{align*}
for any smooth function $C^\infty(\overline{D})^3$, where $\bnu$ is the outward unit  vector to $\partial D$.
In this paper, we usually use $(\cdot)_T$ to represent $\gamma_T(\cdot)$.

%


For a general Lipschitz domain, the tangential trace mapping $\gamma_t$ and the tangential projection operator $\gamma_T$ can be extended from $H^1(\mathrm{curl}, D)$ to $H^{-1/2}{\mathrm{div}}(\partial D)$ and $H^{-1/2}{\mathrm{curl}}(\partial D)$, respectively. Moreover, it is found that the spaces $H^{-1/2}{\mathrm{div}}(\partial D)$ and $H^{-1/2}{\mathrm{curl}}(\partial D)$ can be characterized more precisely (cf. \cite{Monk2002, Buffa2002}). The next theorem primarily presents the extended properties of $\gamma_r$ and $\gamma_T$ and Green's formula associated  with the spaces $H^1(D)^3$ and $H^1(\mathrm{curl}, D)$, corresponding to Theorem 3.29, Theorem 3.31, and Remark 3.30 in \cite{Monk2002}.

\begin{thm}\label{th_auxility1}
Let $D$ be a bounded Lipschitz domain in $\mathbb{R}^3$. Then $\gamma_t$ and $\gamma_T$ can be extended continuously to continuous, linear and surjective maps from $H^1(\mathrm{curl}, D)$ into $H^{-1/2}_{\mathrm{div}}(\partial D)$ and $H^{-1/2}_{\mathrm{curl}}(\partial D)$, respectively. The two spaces $H^{-1/2}_{\mathrm{div}}(\partial D)$ and  $H^{-1/2}_{\mathrm{curl}}(\partial D)$ are dual.
Moreover, the following Green's formulas  hold:
\begin{itemize}
\item[{\rm (1)}] For any $\bv\in H^1(\mathrm{curl}, D)$ and $\bpsi\in H^1(D)^3$,
\begin{equation*}
(\nabla\times \bw,\bpsi)-(\bw,\nabla\times\bpsi)=<\gamma_t(\bw),\bpsi>.
\end{equation*}
\item[{\rm (2)}]For any $\bw\in H^1(\mathrm{curl},D)$ and $\bpsi\in H^1(\mathrm{curl},D)$,
\begin{equation*}
(\nabla\times \bw,\bpsi)-(\bw,\nabla\times\bpsi)=<\gamma_t(\bw),\bpsi_T>.
\end{equation*}

\end{itemize}
\end{thm}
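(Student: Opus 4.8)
The plan is to establish the four assertions — continuity, surjectivity, duality, and the two Green's formulas — by first working with smooth vector fields and then passing to the limit using the density of $C^\infty(\overline{D})^3$ in $H^1(\mathrm{curl}, D)$, which holds for any bounded Lipschitz domain. For $\bw,\bpsi\in C^\infty(\overline{D})^3$ the divergence theorem applied to $\bw\times\bpsi$, together with the identity $\nabla\cdot(\bw\times\bpsi)=\bpsi\cdot(\nabla\times\bw)-\bw\cdot(\nabla\times\bpsi)$ and the triple-product rule $\bnu\cdot(\bw\times\bpsi)=(\bnu\times\bw)\cdot\bpsi$, yields $(\nabla\times\bw,\bpsi)-(\bw,\nabla\times\bpsi)=\int_{\partial D}(\bnu\times\bw)\cdot\bpsi\,\rmd s$, which is the smooth version of (1); since $\bnu\times\bw$ is tangential, $(\bnu\times\bw)\cdot\bpsi=(\bnu\times\bw)\cdot\bpsi_T$ gives the smooth version of (2). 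The right-hand side is bounded by $\|\bw\|_{H^1(\mathrm{curl},D)}\|\bpsi\|_{H^1(D)^3}$ for (1), and by $\|\bw\|_{H^1(\mathrm{curl},D)}\|\bpsi\|_{H^1(\mathrm{curl},D)}$ for (2), so each boundary functional extends by continuity and density to all of $H^1(\mathrm{curl},D)$; this simultaneously furnishes the bounded extensions of $\gamma_t$ and $\gamma_T$ and the continuity part of the statement.

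Next I would identify the precise target spaces and prove surjectivity. The range is characterized by a surface differential condition: since $\mathrm{div}(\nabla\times\bw)=0$, one has $\nabla\times\bw\in H(\mathrm{div},D)$, so its normal trace $\bnu\cdot(\nabla\times\bw)\in H^{-1/2}(\partial D)$; because $\mathrm{div}_{\partial D}\gamma_t(\bw)$ coincides (up to sign) with this normal trace, we obtain $\gamma_t(\bw)\in H^{-1/2}_{\mathrm{div}}(\partial D)$, and the analogous computation places $\gamma_T(\bw)\in H^{-1/2}_{\mathrm{curl}}(\partial D)$. Surjectivity is the substantive step: given $\bg\in H^{-1/2}_{\mathrm{div}}(\partial D)$ I would construct a bounded lifting $\bw\in H^1(\mathrm{curl},D)$ with $\gamma_t(\bw)=\bg$, for instance by solving an auxiliary regularized variational problem (a vector-potential or Maxwell-type boundary value problem) whose solution depends continuously on the boundary data; the open mapping theorem then upgrades the resulting bounded right inverse to surjectivity. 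This step relies on the tangential vector calculus on Lipschitz surfaces, for which I would invoke the framework of \cite{Buffa2002}.

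The duality of $H^{-1/2}_{\mathrm{div}}(\partial D)$ and $H^{-1/2}_{\mathrm{curl}}(\partial D)$ then follows from the extended Green's formula: for $\bg=\gamma_t(\bw)$ and $\bh=\gamma_T(\bv)$ the pairing $<\bg,\bh>=(\nabla\times\bw,\bv)-(\bw,\nabla\times\bv)$ is well defined and bounded by the continuity already established, while nondegeneracy — needed to identify each space as exactly the dual of the other — uses the surjectivity of the two traces to realize every admissible test element as some $\gamma_T(\bv)$ or $\gamma_t(\bw)$. Formulas (1) and (2) in full generality are finally obtained by choosing smooth sequences $\bw_n\to\bw$ in $H^1(\mathrm{curl},D)$ and $\bpsi_n\to\bpsi$ in $H^1(D)^3$ (respectively in $H^1(\mathrm{curl},D)$), applying the smooth identities, and passing to the limit using the continuity of each term, including the boundary pairing via the duality just proved.

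The main obstacle is the surjectivity together with the exact range characterization on a merely Lipschitz boundary. On smooth domains the surface operators $\mathrm{div}_{\partial D}$ and $\mathrm{curl}_{\partial D}$ and the associated trace liftings are classical, but at Lipschitz regularity the tangential fields are no longer smooth and the surface differential calculus must be set up with care; this is precisely where the density of $C^\infty(\overline{D})^3$ in $H^1(\mathrm{curl},D)$ and the Lipschitz tangential-trace theory of \cite{Buffa2002} (with the companion results in \cite{Monk2002}) carry the argument. All remaining steps reduce to continuity-and-density limiting arguments built on the smooth Green's formula.
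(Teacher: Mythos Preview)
The paper does not prove this theorem at all: it is stated as a quotation of known results, with an explicit attribution to Theorem~3.29, Theorem~3.31, and Remark~3.30 in \cite{Monk2002} (and implicitly to \cite{Buffa2002} for the Lipschitz tangential calculus). Your proposal is a correct outline of precisely the standard argument developed in those references --- smooth Green's formula, extension by density of $C^\infty(\overline{D})^3$ in $H^1(\mathrm{curl},D)$, range characterization via $\mathrm{div}_{\partial D}\gamma_t(\bw)=-\bnu\cdot(\nabla\times\bw)$, lifting for surjectivity, and duality from the extended pairing --- so you are essentially reconstructing what the paper defers to the literature. The one place where your sketch is a bit optimistic is the duality: surjectivity of the traces gives well-definedness and nondegeneracy of the pairing, but identifying each space \emph{isometrically} as the dual of the other (with the correct intrinsic norms on $H^{-1/2}_{\mathrm{div}}$ and $H^{-1/2}_{\mathrm{curl}}$) requires the full Hodge-type decomposition of tangential fields on Lipschitz boundaries carried out in \cite{Buffa2002}, which is substantially more work than your paragraph suggests.
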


\subsection{Auxiliary lemmas for Case 1 of Theorem \ref{thm:main1}}
In this subsection, we shall establish several key lemmas for proving Case 1 in Theorem \ref{thm:main1}. Subsequently, we consider the following scattering problems: Given $\bq\in H^{-1/2}_{\mathrm{div}}(\partial D)$, $\bff\in H^{-1/2}_{\mathrm{div}}(\partial \Omega) $, $\bh\in H^{-1/2}_{\mathrm{div}}(\partial \Omega)$ and $\bJ $ with $\supp(\bJ)\Subset B_{R_0}\backslash\overline{\Omega}\Subset B_{R}\backslash\overline{\Omega}$, find $(\bE,\bH,\bE^s,\bH^s)\in H^1(\mathrm{curl}, \Omega\backslash\overline{D})\times H^1(\mathrm{curl}, \Omega\backslash\overline{D})\times  H^1_{\mathrm{loc}}(\mathrm{curl}, \mathbb{R}^3\backslash\overline{\Omega})\times H^1_{\mathrm{loc}}(\mathrm{curl}, \mathbb{R}^3\backslash\overline{\Omega})$ satisfying
\begin{align}\label{MD:tran2}
\left\{ \begin{array}{ll}
\nabla\times\bE-\mathrm{i}k\mu_r\bH={\bf0},\quad \nabla\times\bH+\mathrm{i}k\varepsilon_r\bE={\bf0}&\mbox{in}\ \  \Omega\backslash \overline{D},\quad \\[5pt]
\nabla\times\bE^s-\mathrm{i}k\bH^s={\bf0},\quad \nabla\times\bH^s+\mathrm{i}k\bE^s={\bJ}& \mbox{in}\ \ \mathbb{R}^3\backslash\overline{\Omega},\\[5pt]
\bnu\times\bE-\bnu\times\bE^s=\bff,\quad \bnu\times\bH-\bnu\times\bH^s=\bh&\mbox{on} \ \ \partial{\Omega},\\[5pt]
\bnu\times\bH=\bq&\mbox{on}\ \ \partial{D},\\[5pt]
\lim\limits_{|\bx| \to\infty}|\bx|\big(\bE^s \times\hat{\bx}+\bH^s\big)={\bf 0}.
\end{array}
 \right.
\end{align}



To formulate \eqref{MD:tran2} in a variational  form over a bounded domain, we introduce an artificial boundary $\partial B_R$, representing the surface of a ball $B_R$ with radius $R$, ensuring that the scatterer is contained within the ball's interior. Moreover, given $\bff\in H^{-1/2}{\mathrm{div}}(\partial \Omega)$, if $k^2$ is not a Dirichlet eigenvalue of the operator \lq\lq  $\nabla \times \nabla \times $ " in $B_R\backslash \Omega $,  there exists a unique solution $\bE_{\bff}\in H^1(\mathrm{curl}, B_R\backslash\overline{\Omega})$ to the following equation
 \begin{align}\label{eq:Ef}
&\nabla\times\nabla\times \bw-k^2\bw={\bf0},\qquad \nabla\cdot \bw=0\,\,\, \mbox{in}\,\, B_R\backslash \overline{\Omega},\nonumber\\
 &\bnu\times\bw=\bff\,\quad \mbox{on}\,\quad\partial \Omega,\qquad \bnu\times\bw={\bf0}\quad \mbox{on}\quad\partial D,
\end{align}
and it can be estimated as follows
\begin{equation}\label{ieq:Ef}
\|\bE_{\bff}\|_{H^1(\mathrm{curl},B_R\backslash\overline{\Omega})}\leq C\|\bff\|_{H^{-1/2}_{\mathrm{div}}(\partial D)}\quad \mbox{for some constant $C>0$}.
\end{equation}
 Related uniqueness results can be referred to \cite{LZY2010} and the references cited therein. Now we introduce the exterior Calder\'{o}n operator $G_e$ (cf. \cite{Kirsch1998,Monk2002}), which is an analogue of the Dirichlet-to-Neumann (DtN) map for Maxwell equations. Essentially, $G_e$ is an isomorphism mapping from $H^{-1/2}{\mathrm{div}}(\partial B_R)$ to $H^{-1/2}{\mathrm{div}}(\partial B_R)$, namely,
\begin{align}\label{eq:Calderon}
G_e:&H^{-1/2}_{\mathrm{div}}(\partial B_R)\rightarrow H^{-1/2}_{\mathrm{div}}(\partial B_R)\nonumber\\
&\blambda\longmapsto G_e(\blambda):=\hat{\bx}\times\bH^s \quad \mbox{on}\,\,\partial B_R,
\end{align}
where $(\bE^s, \bH^s)$ satisfies
\begin{align*}
\left\{ \begin{array}{ll}
\nabla\times\bE^s-\mathrm{i}k\bH^s={\bf0}&\mbox{in}\ \mathbb{R}^3\backslash \overline{B_R},\quad \\[5pt]
\nabla\times\bH^s+\mathrm{i}k\bE^s={\bf0}&\mbox{in}\ \mathbb{R}^3\backslash \overline{B_R},\quad \\[5pt]
\hat{\bx}\times \bE^s=\blambda&\mbox{on}\ \partial B_R,\quad \\[5pt]
\lim\limits_{r \to\infty}r(  \bE^s\times\hat{\bx}+\bH^s  \big)={\bf0}.
\end{array}
\right.
\end{align*}

By expressing the magnetic fields through electric fields in \eqref{MD:tran2}, and utilizing the transmission conditions across $\partial \Omega$ and the boundary condition on $\partial D$, along with the definition of $G_e$ and integration by parts, we obtain the following variational formulation for the electric fields in \eqref{MD:tran2}. Given $\bq\in H^{-1/2}_{\mathrm{div}}(\partial D)$, $\bff\in H^{-1/2}_{\mathrm{div}}(\partial \Omega)$, $\bh\in H^{-1/2}_{\mathrm{div}}(\partial \Omega)$ and $\bJ $ with $\supp(\bJ)\Subset B_{R_0}\backslash\overline{\Omega}\Subset B_{R}\backslash\overline{\Omega}$, find $\bU\in H^1(\mathrm{curl},B_R\backslash\overline{D})$ satisfying
\begin{small}\begin{align}\label{eq:VF}
&\int_{\Omega\backslash\overline{D}}\mu_r^{-1}(\nabla\times \bU)\cdot(\nabla\times\overline{\bPhi})-k^2\varepsilon_r\bU\cdot\overline{\bPhi}\mathrm{d}\bx+\int_{B_R\backslash\overline{\Omega}}(\nabla\times \bU)\cdot(\nabla\times\overline{\bPhi})-k^2\bU\cdot\overline{\bPhi}\mathrm{d}\bx\nonumber\\
&
+\mathrm{i}k\int_{\partial B_R}G_e(\hat{\bx}\times \bU)\cdot\overline{\bPhi}_{\mathrm{T}}\,\mathrm{d}\sigma=-\int_{\partial \Omega}\mathrm{i}k\bh\cdot\overline{\bPhi}_{\mathrm{T}}\mathrm{d}\sigma+\int_{\partial D}\mathrm{i}k\bq\cdot\overline{\bPhi}_{\mathrm{T}}\,\mathrm{d}\sigma-\mathrm{i}k\int_{\partial B_R}G_e(\hat{\bx}\times\bE_{\bff})\cdot\overline{\bPhi}_{\mathrm{T}}\,\mathrm{d}\sigma\quad\nonumber\\
&-\int_{B_R\backslash\overline{\Omega}}(\nabla\times \bE_{\bff})\cdot(\nabla\times\overline{\bPhi})-k^2\bE_{\bff}\cdot\overline{\bPhi}\mathrm{d}\bx+\int_{B_R\backslash\overline{\Omega}}\mathrm{i}k \bJ\cdot\overline{\bPhi}\,\mathrm{d}\bx
\end{align}
\end{small}for any test function $\bPhi\in H^1(\mathrm{curl},B_R\backslash\overline{D})$. Assume that $\bU$ is a solution of \eqref{eq:VF}, by choosing sufficiently smooth test functions $\bPhi$, we can easily show that $\bE|_{D}:=\bU|_{D}$ and $\bE^s:=\bU|_{B_R\backslash\overline{\Omega}}+\bE_{\bff}$ satisfy the differential equations for the electric fields of \eqref{MD:tran2} in $\Omega\backslash\overline{D}$ and $B_R\backslash\overline{\Omega}$, respectively, along with  the transmission conditions on $\partial \Omega$ and $\hat{\bx}\times(\nabla\times\bE^s)=\mathrm{i}k G_e(\hat{\bx}\times\bE^s)$ on $\partial B_R$. Furthermore, a solution of the variational problem \eqref{eq:VF} and the corresponding magnetic fields $\bH^s=\frac{\nabla\times \bE^s}{\mathrm{i}k}$ and $\bH=\frac{\nabla\times \bE}{\mathrm{i}k}$ can be extended to a solution of \eqref{MD:tran2}.  Notably,  at the interface $\partial B_R$, there is no jump of $\hat{\bx}\times \bE^s$. Consequently, based on the relationship between $\hat{\bx}\times\bE^s$ and $\hat{\bx}\times\bH^s$ through the operator $G_e$, we infer that  there is no jump of $\hat{\bx}\times\bH^s$ either.

\begin{thm}\label{th:unique}
The problems \eqref{MD:tran2} and \eqref{eq:VF} have at most one solution.
\end{thm}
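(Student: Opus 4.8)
The plan is to prove uniqueness for the differential formulation \eqref{MD:tran2}; since the discussion preceding the theorem establishes that solutions of \eqref{MD:tran2} and of the variational problem \eqref{eq:VF} are in one-to-one correspondence (a solution $\bU$ of \eqref{eq:VF} together with $\bH^s=(\nabla\times\bE^s)/(\mathrm{i}k)$ and $\bH=(\nabla\times\bE)/(\mathrm{i}k)$ extends to a solution of \eqref{MD:tran2}, and conversely), it suffices to treat \eqref{MD:tran2}. By linearity, the difference of two solutions solves \eqref{MD:tran2} with vanishing data $\bq=\bff=\bh=\bJ=\mathbf{0}$, so I would show that this homogeneous problem admits only the trivial solution. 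In the homogeneous case the transmission conditions reduce to continuity of both tangential traces $\bnu\times\bE$ and $\bnu\times\bH$ across $\partial\Omega$, while the obstacle condition becomes $\bnu\times\bH=\mathbf{0}$ on $\partial D$ (the PMC case of this subsection).

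First I would derive a Poynting-type energy identity. Applying Green's formula (Theorem~\ref{th_auxility1}) to $\nabla\cdot(\bE\times\overline{\bH})=(\nabla\times\bE)\cdot\overline{\bH}-\bE\cdot(\nabla\times\overline{\bH})$ on the bounded region $B_R\setminus\overline{D}$ and inserting the field equations (recalling that $\mu_r$ is a positive scalar and $\varepsilon_r$ is symmetric), the flux of $\bE^s\times\overline{\bH^s}$ through $\partial B_R$ is expressed through the volume integrals of $\mu_r|\bH|^2$ and $\overline{\bE}\cdot\varepsilon_r\,\bE$ over $\Omega\setminus\overline{D}$ and of $|\bH^s|^2-|\bE^s|^2$ over $B_R\setminus\overline{\Omega}$. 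The interface contributions on $\partial\Omega$ cancel because both tangential traces are continuous there, and the boundary term on $\partial D$ vanishes since $\bnu\cdot(\bE\times\overline{\bH})=\overline{\bH}\cdot(\bnu\times\bE)=-\bE\cdot\overline{(\bnu\times\bH)}=\mathbf{0}$ under the PMC condition $\bnu\times\bH=\mathbf{0}$.

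Next I would separate the dissipation and the radiated energy. Using $\overline{\bE}\cdot\Re\varepsilon_r\,\bE\ge\gamma|\bE|^2$ and $\overline{\bE}\cdot\Im\varepsilon_r\,\bE\ge 0$ from \eqref{ineq:gamma}, the sign-definite part of the flux identity gives $\Re\int_{\partial B_R}(\hat{\bx}\times\bE^s)\cdot\overline{\bH^s}\,\mathrm{d}\sigma=-k\int_{\Omega\setminus\overline{D}}\overline{\bE}\cdot\Im\varepsilon_r\,\bE\,\mathrm{d}\bx\le 0$. On the other hand, because $\bE^s,\bH^s$ solve the constant-coefficient Maxwell system with the Silver--M\"uller condition in $\mathbb{R}^3\setminus\overline{\Omega}$, an annular divergence argument shows the annular correction $\mathrm{i}k\int(|\bH^s|^2-|\bE^s|^2)$ is purely imaginary, so this real part is independent of $R$ and, letting $R\to\infty$, equals $\|\bE^\infty\|_{L^2(\mathbb{S}^2)^3}^2\ge 0$ (cf. \cite{Colton2013,Monk2002}). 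Hence both quantities vanish: the electric far-field pattern is zero and $\int_{\Omega\setminus\overline{D}}\overline{\bE}\cdot\Im\varepsilon_r\,\bE=0$. Rellich's lemma together with unique continuation in the homogeneous, hence analytic, exterior then forces $\bE^s=\bH^s=\mathbf{0}$ in $\mathbb{R}^3\setminus\overline{\Omega}$.

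Finally, since $\bE^s$ and $\bH^s$ vanish near $\partial\Omega$, the transmission conditions yield vanishing Cauchy data $\bnu\times\bE=\bnu\times\bH=\mathbf{0}$ on $\partial\Omega$ for the interior field, which solves $\nabla\times(\mu_r^{-1}\nabla\times\bE)-k^2\varepsilon_r\bE=\mathbf{0}$ in $\Omega\setminus\overline{D}$. A unique continuation principle for this curl--curl system then gives $\bE=\bH=\mathbf{0}$ there, completing the proof. I expect this last step to be the main obstacle: since \eqref{ineq:gamma} only provides $\Im\varepsilon_r\ge 0$ (possibly degenerate), the dissipation term alone cannot kill a standing interior wave, so one genuinely must invoke unique continuation for an anisotropic curl--curl equation whose coefficients $\mu_r,\varepsilon_r$ are merely $L^\infty$. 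Establishing (or citing) a UCP valid in this low-regularity, matrix-valued setting---exploiting that $\mu_r$ is scalar to recast the system in a form amenable to Carleman estimates---is the delicate technical core, whereas the preceding energy and Rellich bookkeeping is standard.
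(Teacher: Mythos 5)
Your proposal is correct and follows essentially the same route as the paper's proof: a Green's-formula/Poynting energy identity with cancellation of the $\partial\Omega$ interface terms and the $\partial D$ term under the PMC condition, the sign condition $\Im\varepsilon_r\geq 0$ to obtain $\Re\int_{\partial B_R}(\bnu\times\overline{\bE}^s)\cdot\bH^s\,\mathrm{d}\sigma\leq 0$, Rellich's lemma to conclude $\bE^s=\bH^s={\bf 0}$ outside $\Omega$, and unique continuation to kill the interior field in $\Omega\backslash\overline{D}$. The only differences are cosmetic---you phrase the identity via $\nabla\cdot(\bE\times\overline{\bH})$ rather than dotting the curl-curl equation with $\overline{\bE}$, and you spell out the $R\to\infty$ far-field step that the paper delegates to \cite[Theorem 6.11]{Colton2013}---while your closing caveat about unique continuation for merely $L^\infty$ anisotropic coefficients flags a genuine delicacy that the paper's own proof invokes without justification.
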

\begin{proof}
We only need to show uniqueness for the problem \eqref{MD:tran2}. Assume that $\bE$, $\bE^s$, $\bH=1/(\mathrm{i}k\mu_r)\nabla\times \bE$ and $\bH^s=1/(\mathrm{i}k)\nabla\times \bE^s$ are the solutions of \eqref{MD:tran2} with boundary conditions $\bff=\bh=\bq=\bJ={\bf 0}$. Taking the dot product of the first equation in \eqref{MD:tran2} by the electric fields $\overline{\bE}$  in the domain $\Omega\backslash\overline{D}$, we see that
\begin{equation*}
\int_{\Omega\backslash\overline{D}}\left(\nabla\times(\mu_r^{-1}\nabla\times\bE)\right)\cdot\overline{\bE}\,\mathrm{d}\bx-\int_{\Omega\backslash\overline{D}}k^2\varepsilon_r\bE\cdot\overline{\bE}\,\mathrm{d}\bx=\bf 0.
\end{equation*}

Using Green's formula and the fact that $\nabla(\alpha \,\ba)=\nabla\alpha\times\ba+\alpha\nabla\times\ba$, $\ba\cdot(\bb\times\bc)=\bb\cdot(\bc\times\ba)=\bc\cdot(\ba\times\bb),$ $\ba\times\bb=-\bb\times\ba$ for arbitray vectors $\ba$, $\bb$, $\bc$ and scalar $\alpha$, we see that
\begin{equation*}
\int_{\Omega\backslash\overline{D}}\mu_r^{-1}|\nabla\times\bE|^2\mathrm{d}\bx-\int_{\Omega\backslash\overline{D}}k^2\varepsilon_r\bE\cdot\overline{\bE}\,\mathrm{d}\bx-\int_{\partial(\Omega\backslash\overline{D})}\big(\bnu\times(\nabla\times\bE)\big)\cdot(\mu_r^{-1}\overline{\bE})_{\mathrm T}\,\mathrm{d}\sigma=\bf 0.
\end{equation*}
Repeating the above procedure in $B_R\backslash\overline{\Omega}$ with $\bE^s$ replaced by $\bE$, we observe that
{\small\begin{equation*}
\int_{B_R\backslash\overline{\Omega}}|\nabla\times\bE^s|^2\,\mathrm{d}\bx-\int_{B_R\backslash\overline{\Omega}}k^2|\bE^s|^2\,\mathrm{d}\bx-\int_{\partial B_R}\big(\bnu\times(\nabla\times\bE^s)\big)\cdot\overline{\bE}^s_{\mathrm T}\,\mathrm{d}\sigma+\int_{\partial \Omega}\big(\bnu\times(\nabla\times\bE^s)\big)\cdot\overline{\bE}^s_{\mathrm T}\,\mathrm{d}\sigma=\bf 0.
\end{equation*}}
Summing the above two formulae and then using the transmission conditions on $\partial \Omega$ and $\partial B_R$,  we get
{\small\begin{align}\label{interD}
\int_{\partial B_R}(\bnu\times(\nabla\times\bE^s))\cdot\overline{\bE}^s\mathrm{d}\sigma
&=\int_{B_R\backslash\overline{\Omega}}|\nabla\times\bE^s|^2-k^2|\bE^s|^2\mathrm{d}\bx+\int_{\Omega\backslash\overline{D}}\mu^{-1}_r|\nabla\times\bE|^2-k^2\varepsilon_r\bE\cdot\overline{\bE}\,\mathrm{d}\bx
\end{align}}
Using the identity $\mathrm{i}k\bH^s=\nabla\times \bE^s$ in $B_R\backslash\overline{\Omega}$ and the fact that $(\bnu\times \ba)\cdot\bb=-(\bnu\times \bb)\cdot\ba$ again for any vector $\ba$ and $\bb$, it  follows directly that

\begin{align*}
-\mathrm{i}k\int_{\partial B_R}(\bnu\times\overline{\bE}^s)\cdot\bH^s\mathrm{d}\sigma&=\mathrm{i}k\int_{\partial B_R}(\bnu\times\bH^s)\cdot\overline{\bE}^s\mathrm{d}\sigma=\int_{B_R\backslash\overline{\Omega}}|\nabla\times\bE^s|^2-k^2|\bE^s|^2\mathrm{d}\bx\nonumber\\
&\quad+\int_{\Omega\backslash\overline{D}}\mu^{-1}_r|\nabla\times\bE|^2-k^2\varepsilon_r|\bE|^2\mathrm{d}\bx.
\end{align*}
Taking the imaginary part of \eqref{interD}, we imply that
$$
\Re\left(\int_{\partial B_R}(\bnu\times\overline{\bE}^s)\cdot\bH^s\mathrm{d}\sigma\right)=-k\int_{\Omega\backslash\overline{D}}\Im(\varepsilon_r)\bE\cdot\overline{\bE}\mathrm{d}\bx\leq 0.
$$
From the Rellich lemma (cf.\cite[Theorem 6.11]{Colton2013}), we conclude that $\bE^s=\bH^s={\bf 0}$ in $\mathbb{R}^3\backslash\overline{\Omega}$, simplifying the transmission conditions to the continuity of the tangential components of the electric and magnetic fields. Lastly, applying the unique continuation principle, we deduce that $\bE$ and $\bH$ are zero in $\Omega\backslash\overline{D}$.

The proof is complete.
\end{proof}

Next, we will demonstrate the existence of a solution to \eqref{eq:VF}. Define the sesquilinear form
\begin{align*}
\mathcal{M}:H^1(\mathrm{curl},B_R\backslash\overline{D})\times H^1(\mathrm{curl},B_R\backslash\overline{D}) \longrightarrow\mathbb{C}
\end{align*}
and the linear form $\mathcal{F}:H^1(\mathrm{curl},B_R\backslash\overline{D})\longrightarrow\mathbb{C}$ as follows
\begin{align}
&\mathcal{M}(\bU,\bPhi):=\int_{B_R\backslash\overline{D}}\mu_r^{-1}(\nabla\times\bU)\cdot(\nabla\times\overline{\bPhi})-k^2\varepsilon_r\bU\cdot\overline{\bPhi}\,\mathrm{d}\bx+\mathrm{i}k\int_{\partial B_R}G_e(\hat{\bx}\times\bU)\cdot\overline{\bPhi}_{\mathrm{T}}\,\mathrm{d}\sigma\label{eq:A(U,V)}\\
&\mathcal{F}(\bPhi):=-\int_{\partial \Omega}\mathrm{i}k\bh\cdot\overline{\bPhi}_{\mathrm{T}}\,\mathrm{d}\sigma-\mathrm{i}k\int_{\partial B_R}G_e(\hat{\bx}\times\bE_{\bff})\cdot\overline{\bPhi}_{\mathrm{T}}\,\mathrm{d}\sigma+\int_{\partial D}\mathrm{i}k\bq\cdot\overline{\bPhi}_{\mathrm{T}}\mathrm{d}\sigma\nonumber\\
&\qquad\qquad-\int_{B_R\backslash\overline{\Omega}}(\nabla\times \bE_{\bff})\cdot(\nabla\times\overline{\bPhi})-k^2\bE_{\bff}\cdot\overline{\bPhi}\mathrm{d}\bx+\int_{B_R\backslash\overline{\Omega}}\mathrm{i}k \bJ\cdot\overline{\bPhi}\,\mathrm{d}\bx,\label{eq:F(v)}
\end{align}
where
\begin{equation}\label{eq:U_E}
\mu_r(\bx)=
\begin{cases}
\mu_r(\bx)\big|_{\Omega
\backslash \overline D},\quad  \bx\in \Omega \backslash \overline D,\\
{1} ,\hspace{1.9cm}  \bx \in  B_R\backslash \overline \Omega
\end{cases} \,\, \mbox{and}\,\,\, \varepsilon_r(\bx)=
\begin{cases}
\varepsilon_r(\bx)\big|_{\Omega
\backslash \overline D},\quad \bx\in \Omega \backslash \overline D,\\
I,\hspace{1.9cm}  \bx \in  B_R\backslash \overline \Omega.
\end{cases}
\end{equation}
It is evident that $\mathcal{M}(\cdot,\cdot)$ is bilinear, and $\mathcal{F}(\cdot)$ is linear. Recognizing that the embedding operator $H^1(\mathrm{curl},B_R\backslash\overline{D})\rightarrow L^2(B_R\backslash\overline{D})$ is not compact, we will employ the Helmholtz decomposition of the space $H^1(\mathrm{curl},B_R\backslash\overline{D})$ to isolate the null space $\nabla\mathcal{S}$ from $H^1(\mathrm{curl}, B_R\backslash\overline{D})$. To that end, we introduce the following spaces:
\begin{equation}\label{eq:S}
\mathcal{S}:=\left\{\psi\in H^1(B_R\backslash\overline{D})\Big|\int_{\partial B_R}\psi\,\mathrm{d}\sigma=0\right\}
\end{equation}
and
\begin{align}\label{def:X0}
\mathrm{X}_0:&=\bigg\{ \bU\in H^1(\mathrm{curl},B_R\backslash\overline{D})\bigg|-k^2\int_{\Omega\backslash\overline{D}} \varepsilon_r\bU\cdot\nabla\overline{\psi}\,\mathrm{d}\bx-k^2\int_{B_R\backslash\overline{\Omega}}\bU\cdot\nabla\overline{\psi}\,\mathrm{d}\bx\nonumber\\
&\qquad\qquad\qquad\qquad\qquad\qquad+\mathrm{i}k\int_{\partial B_R}G_e(\hat{\bx}\times\bU)\cdot\nabla_{\partial B_R}\overline{\psi}\,\mathrm{d}\sigma=0
\,\,\mbox{for all }\,\, \psi\in \mathcal{S}         \bigg\}\nonumber\\
&=\bigg\{\bU\in H^1(\mathrm{curl},B_R\backslash\overline{D})\bigg|\nabla\cdot(\varepsilon_r\bU)=0\,\, \mbox{in}\,\, B_R\backslash\overline{D},\,\, -k^2\,\hat{\bx}\cdot\bU=\mathrm{i}k\,\nabla_{\partial B_R}\cdot G_e(\hat{\bx}\times\bU)\,\, \nonumber\\
&\qquad\qquad\qquad\qquad\qquad\qquad\qquad\mbox{on}\,\,\partial B_R\quad\mbox{and}\,\, \bnu\cdot\bU=0\,\, \mbox{on}\,\, \partial D                                      \bigg\},
\end{align}
where $\varepsilon_r$ is given by \eqref{eq:U_E} and $\nabla_{\partial B_R}\cdot(\hat{\bx}\times\bU)$ is defined by
\begin{equation}\label{eq:grad_BR}
 \nabla_{\partial B_R}\cdot(\hat{\bx}\times\bU):=-\bnu\cdot(\nabla\times \bU)\big|_{\partial B_R}.
\end{equation}
 It is straightforward to verify that $\mathcal{S}$ is a Hilbert space associated with the norm $\|\cdot\|_{H^1(B_R\backslash\overline{D})^3}$.


Let $\widetilde{G}_e$ be defined as $G_e$ in \eqref{eq:Calderon} but with $k$ replaced by $\mathrm{i}$. Since $\big<\widetilde{G}e\blambda,\blambda\times \hat{\bx}\big><0$ for any $\blambda \in H^{-1/2}{\mathrm{div}}(\partial B_R)$, $\widetilde{G}_e$ is negative definite. The following lemma presents some properties of the Calder'{o}n operators $G_e$ and $\widetilde{G}_e$ (cf. \cite{Monk2002}), which are crucial in proving the auxiliary lemmas.

\begin{lem}\label{lem:proper1}
Let $G_e$ and $\widetilde{G}_e$ be two Calder\'{o}n operators, which are defined as above. Then we have the following properties:
\begin{itemize}
\item[{\rm (1)}] For any non-zero $\blambda \in H^{-1/2}_{\mathrm{div}}(\partial B_R)$, $\Big|\big<\widetilde{G}_e\blambda,\blambda\times \hat{\bx}\big>\Big|\geq c\|\blambda\|^2_{H^{-1/2}_{\mathrm{div}}(\partial B_R)};$
\item[{\rm (2)}] $G_e+\mathrm{i}\,\widetilde{G}_e:H^{-1/2}_{\mathrm{div}}(\partial B_R)\rightarrow H^{-1/2}_{\mathrm{div}}(\partial B_R)$ is compact;
\item[{\rm (3)}] There are two operators $G^1_e$ and $G^2_e$ such that $G_e\blambda:=G^1_e\blambda+G^2_e\blambda$, where
 \begin{itemize}\item[{\rm (i)}]$G^1_e:\bE\longmapsto G^1_e(\hat{\bx}\times\bE)$ is compact from $\mathrm{X}_0$ into $H^{-1/2}_{\mathrm{div}}(\partial B_R)$;
 \item[{\rm (ii)}]$\mathrm{i}k\big<G_e^2(\hat{\bx}\times\bE),\overline{\bE}_{\mathrm{T}}\big>_{H^{-1/2}_{\mathrm{div}}(\partial B_R)\times H^{-1/2}_{\mathrm{div}}(\partial B_R)}\geq 0$ for all $\bE\in H^1(\mathrm{curl}, B_R\backslash\overline{D})$,
\end{itemize}
where $\bE_{\mathrm T}$ represents the tangential projection of $\bE$ onto $\partial B_R$.
\end{itemize}
\end{lem}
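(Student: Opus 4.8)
The plan is to diagonalise both Calder\'{o}n operators in the orthonormal basis of tangential vector spherical harmonics on $\partial B_R$, thereby reducing each to multiplication by an explicit scalar symbol indexed by the spherical degree $n$. First I would recall that any $\blambda\in H^{-1/2}_{\mathrm{div}}(\partial B_R)$ admits a Hodge-type decomposition into a surface-divergence-free part and a surface-curl-free part, expanded in the two families of tangential vector spherical harmonics $\{\bU_n^m\}$ and $\{\hat{\bx}\times\bU_n^m\}$. Solving the exterior Maxwell system mode by mode with radiating Silver--M\"{u}ller behaviour, the scattered field is a superposition of radiating electric and magnetic multipoles whose radial dependence is governed by the spherical Hankel functions $h_n^{(1)}(k|\bx|)$. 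Matching $\hat{\bx}\times\bE^s=\blambda$ on $\partial B_R$ and reading off $\hat{\bx}\times\bH^s$ then shows that $G_e$ acts as multiplication by symbols built from $h_n^{(1)}(kR)$ and the logarithmic-derivative ratios $(\rho\,h_n^{(1)}(\rho))'/h_n^{(1)}(\rho)$ evaluated at $\rho=kR$. The operator $\widetilde{G}_e$ arises from exactly the same computation with $k$ replaced by $\mathrm{i}$, so its symbols are the corresponding quotients of modified spherical Bessel functions $k_n(\rho)$, which are real with a fixed sign.

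For (1), I would exploit the explicit modified-Bessel symbols of $\widetilde{G}_e$. Because these symbols are real and sign-definite, the quadratic form $\langle\widetilde{G}_e\blambda,\blambda\times\hat{\bx}\rangle$ splits into a sum over modes with no cross terms, and each diagonal coefficient grows linearly in $n$ on the relevant family; combined with the boost contributed by the surface divergence, this reproduces exactly the spherical weighting that defines the $H^{-1/2}_{\mathrm{div}}(\partial B_R)$ norm. The diagonal form is therefore bounded below by a constant times $\|\blambda\|^2_{H^{-1/2}_{\mathrm{div}}(\partial B_R)}$, which is the asserted coercivity. The key quantitative input is the large-$n$ asymptotics $k_n'(\rho)/k_n(\rho)\sim -n/\rho$, which pins down the growth rate of the symbols and simultaneously fixes their sign.

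For (2), I would compare the symbols of $G_e$ and $-\mathrm{i}\widetilde{G}_e$ term by term. The leading large-$n$ behaviour of the Hankel quotients entering $G_e$ and of the modified-Bessel quotients entering $\widetilde{G}_e$ is dictated solely by the spherical degree $n$ and not by the wave number, so after the correction factor $\mathrm{i}$ the two symbol sequences share identical leading parts and their difference decays one order faster in $n$ than the inverse weight defining $H^{-1/2}_{\mathrm{div}}$. Hence $G_e+\mathrm{i}\widetilde{G}_e$ maps $H^{-1/2}_{\mathrm{div}}(\partial B_R)$ boundedly into a space of higher tangential smoothness, and the compact Sobolev embedding on the closed surface $\partial B_R$ upgrades boundedness to compactness.

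Finally, for (3) I would set $G_e^2:=-\mathrm{i}\widetilde{G}_e$ and $G_e^1:=G_e-G_e^2=G_e+\mathrm{i}\widetilde{G}_e$, the split being forced by (2). Part (2) gives compactness of $G_e^1$ on $H^{-1/2}_{\mathrm{div}}(\partial B_R)$; composing with the continuous trace $\bE\mapsto\hat{\bx}\times\bE$ from $\mathrm{X}_0\subset H^1(\mathrm{curl},B_R\backslash\overline{D})$ into $H^{-1/2}_{\mathrm{div}}(\partial B_R)$ (Theorem~\ref{th_auxility1}) yields (i). For (ii), using $(\hat{\bx}\times\bE)\times\hat{\bx}=\bE_{\mathrm T}$ and the definition of $G_e^2$ one reduces the pairing to $\mathrm{i}k\langle G_e^2(\hat{\bx}\times\bE),\overline{\bE}_{\mathrm T}\rangle=k\langle\widetilde{G}_e(\hat{\bx}\times\bE),\overline{\bE}_{\mathrm T}\rangle$, whose sign is fixed by the definiteness of $\widetilde{G}_e$ established in (1) together with $k>0$, giving nonnegativity. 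The main obstacle throughout is the careful asymptotic analysis of the Hankel and modified-Bessel quotients: establishing both their precise large-$n$ growth (for the lower bound in (1) and the identification with the $H^{-1/2}_{\mathrm{div}}$ weights) and the exact cancellation of leading terms (for the smoothing in (2)). All three conclusions hinge on these estimates, which is why I would reference the spherical-harmonic computations in \cite{Monk2002} rather than reproduce them.
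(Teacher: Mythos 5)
Your spherical-harmonic diagonalisation is indeed the route behind this lemma: the paper offers no proof of its own but quotes the statement from \cite{Monk2002}, where parts (1) and (2) are established exactly by the symbol computations you outline (Hodge decomposition into the two families of tangential vector spherical harmonics, quotients of $h_n^{(1)}$ and of modified spherical Bessel functions, large-$n$ asymptotics matched against the $H^{-1/2}_{\mathrm{div}}$ weights). So (1) and (2) of your proposal are sound in substance.

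The genuine gap is in part (3). Your splitting $G_e^2:=-\mathrm{i}\widetilde{G}_e$, $G_e^1:=G_e+\mathrm{i}\widetilde{G}_e$ cannot deliver property (ii). You reduce the pairing to $k\langle\widetilde{G}_e(\hat{\bx}\times\bE),\overline{\bE}_{\mathrm T}\rangle$ and assert nonnegativity ``by the definiteness of $\widetilde{G}_e$ established in (1)''. But (1) bounds only the \emph{absolute value} of the form and carries no sign information; the sign is fixed in the paragraph preceding the lemma, where $\widetilde{G}_e$ is stated to be \emph{negative} definite, $\langle\widetilde{G}_e\blambda,\blambda\times\hat{\bx}\rangle<0$. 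Since $\bE_{\mathrm T}=(\hat{\bx}\times\bE)\times\hat{\bx}$, your expression equals $k\langle\widetilde{G}_e\blambda,\overline{\blambda}\times\hat{\bx}\rangle$ with $\blambda=\hat{\bx}\times\bE$, which is $\leq 0$ and strictly negative for $\blambda\neq\mathbf{0}$ --- the opposite of what (ii) demands. Flipping the sign, $G_e^2:=+\mathrm{i}\widetilde{G}_e$, repairs (ii) but destroys your derivation of (i): then $G_e^1=G_e-\mathrm{i}\widetilde{G}_e=(G_e+\mathrm{i}\widetilde{G}_e)-2\mathrm{i}\widetilde{G}_e$, and $\widetilde{G}_e$ is not compact (its symbol grows like $n$ on one Hodge family --- that growth is precisely what produces the coercivity in (1)), so compactness no longer follows from (2). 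Hence no choice of sign in your ``forced'' splitting yields (i) and (ii) simultaneously; the two requirements pull in opposite directions.

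The repair --- and what the cited source actually does --- is to define $G_e^1$ and $G_e^2$ directly from the multipole series rather than through $\widetilde{G}_e$: one places into $G_e^2$ the part of the symbol whose quadratic form $\mathrm{i}k\langle G_e^2(\hat{\bx}\times\bE),\overline{\bE}_{\mathrm T}\rangle$ is nonnegative mode by mode for \emph{every} $\bE\in H^1(\mathrm{curl},B_R\backslash\overline{D})$, the sign coming from the Wronskian identity for spherical Hankel functions, $\Im\bigl\{\overline{h_n^{(1)}(z)}\,(z\,h_n^{(1)}(z))'\bigr\}=1/z>0$ for $z>0$, not from $\widetilde{G}_e$. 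The remainder $G_e^1$ is then shown compact as a map out of $\mathrm{X}_0$; note that (i) is deliberately stated on $\mathrm{X}_0$ rather than on the full trace space, reflecting that the intended argument may exploit the constraints built into $\mathrm{X}_0$ (e.g.\ $-k^2\,\hat{\bx}\cdot\bU=\mathrm{i}k\,\nabla_{\partial B_R}\cdot G_e(\hat{\bx}\times\bU)$ on $\partial B_R$) --- your composition argument would prove the stronger claim of compactness from all of $H^1(\mathrm{curl},B_R\backslash\overline{D})$, which would be fine if your splitting were valid, but it rests on the splitting that fails at (ii).
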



To analyze the properties of these operators on $\mathcal{S}$, we now consider the following sesquilinear forms $\mathcal{A}_1(\cdot,\cdot)$ and $\mathcal{A}_2(\cdot,\cdot)$ on $\mathcal{S}\times\mathcal{S}$:
\begin{align*}
\mathcal{A}_1(\nabla\phi,\nabla\psi):&=-k^2\int_{B_R\backslash\overline{D}}\varepsilon_r(\bx)\nabla\phi\cdot\nabla\overline{\psi}\,\mathrm{d}\bx+k^2\int_{\partial B_R}\widetilde{G}_e(\hat{\bx}\times\nabla\phi)\cdot\nabla_{\partial B_R}\overline{\psi}\mathrm{d}\sigma,\\
\mathcal{A}_2(\nabla\phi,\nabla\psi):&=\mathrm{i}k\int_{\partial B_R}(G_e+\mathrm{i}\widetilde{G}_e)(\hat{\bx}\times\nabla\overline{\psi})\mathrm{d}\sigma,
\end{align*}
which satisfies
$$
\mathcal{M}(\nabla\phi,\nabla\psi)=\mathcal{A}_1(\nabla\phi,\nabla\psi)+\mathcal{A}_2(\nabla\phi,\nabla\psi)\quad \mbox{for all}\quad \phi,\psi\in \mathcal{S}.
$$

\begin{lem}\label{lem:compact1}
With the same notation and setup as above, the following holds true
\begin{itemize}
\item[{\rm (1)}] $\mathcal{A}_1(\cdot,\cdot)$ is bounded and coercive on $\mathcal{S}\times\mathcal{S}$ and there exists a compact operator $K_1$ from $\mathcal{S}$ into itself which satisfies $\mathcal{A}_2(\phi,\xi)=\mathcal{A}_1(K_1\phi,\xi)$ for all $\phi$, $\xi\in \mathcal{S}$.
\item[{\rm (2)}]
    The operator $I+K_1$ is an isomorphism from $\mathcal{S}$ onto itself. Furthermore, there exists a unique solution $a\in\mathcal{S}$ to the variational problem $\mathcal{M}(\nabla a,\nabla\phi)=\mathcal{F}(\phi)$ for all $\phi\in \mathcal{S}$ and the solution can be given by $a=(I+K_1)^{-1}b$, where $b$ is in $\mathcal{S}$ and satisfies $\mathcal{A}_1(b,\phi)=\mathcal{F}(\phi)$ for all $\phi\in\mathcal{S}$.
\end{itemize}
\end{lem}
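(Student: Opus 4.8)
The plan is to read $\mathcal{M}(\nabla\,\cdot\,,\nabla\,\cdot\,)$ on the potential space $\mathcal{S}$ as a coercive form plus a compact perturbation and then invoke the Fredholm alternative, organising everything around the recorded splitting $\mathcal{M}(\nabla\phi,\nabla\psi)=\mathcal{A}_1(\nabla\phi,\nabla\psi)+\mathcal{A}_2(\nabla\phi,\nabla\psi)$. For part (1) I would first check boundedness of $\mathcal{A}_1$, which is routine: it follows from $\mu_r^{-1},\varepsilon_r\in L^\infty$, the continuity of the tangential trace $H^1(\mathrm{curl},B_R\backslash\overline D)\to H^{-1/2}_{\mathrm{div}}(\partial B_R)$ granted by Theorem \ref{th_auxility1}, and the boundedness of $\widetilde G_e$. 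For coercivity I would estimate $\Re\big(-\mathcal{A}_1(\nabla\phi,\nabla\phi)\big)$: the volume term contributes $k^2\,\overline{\nabla\phi}\cdot\Re\varepsilon_r\,\nabla\phi\ge k^2\gamma|\nabla\phi|^2$ by \eqref{ineq:gamma}, while the boundary term is bounded below through the definiteness estimate $|\langle\widetilde G_e\blambda,\blambda\times\hat\bx\rangle|\ge c\|\blambda\|^2$ of Lemma \ref{lem:proper1}(1) applied with $\blambda=\hat\bx\times\nabla\phi$, for which $\blambda\times\hat\bx=\nabla_{\partial B_R}\phi$. This gives $|\mathcal{A}_1(\nabla\phi,\nabla\phi)|\ge c\|\nabla\phi\|_{L^2(B_R\backslash\overline D)}^2$; since every $\phi\in\mathcal{S}$ has vanishing mean on $\partial B_R$, a Poincar\'e inequality upgrades this to $|\mathcal{A}_1(\nabla\phi,\nabla\phi)|\ge c'\|\phi\|_{H^1(B_R\backslash\overline D)}^2$, which is the desired coercivity.

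With $\mathcal{A}_1$ bounded and coercive, I would construct $K_1$ by Riesz--Lax--Milgram representation: for each fixed $\phi\in\mathcal{S}$ the map $\xi\mapsto\mathcal{A}_2(\phi,\xi)$ is a bounded conjugate-linear functional on $\mathcal{S}$, so there is a unique $K_1\phi\in\mathcal{S}$ with $\mathcal{A}_1(K_1\phi,\xi)=\mathcal{A}_2(\phi,\xi)$ for all $\xi\in\mathcal{S}$, and $K_1$ is linear and bounded. Compactness of $K_1$ is where Lemma \ref{lem:proper1}(2) enters, since $\mathcal{A}_2$ is built from $G_e+\mathrm{i}\widetilde G_e$, which is compact on $H^{-1/2}_{\mathrm{div}}(\partial B_R)$. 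Concretely, if $\phi_n\rightharpoonup 0$ weakly in $\mathcal{S}$, then $\hat\bx\times\nabla\phi_n\rightharpoonup 0$ in $H^{-1/2}_{\mathrm{div}}(\partial B_R)$, hence $(G_e+\mathrm{i}\widetilde G_e)(\hat\bx\times\nabla\phi_n)\to 0$ strongly, so that $\sup_{\|\xi\|\le 1}|\mathcal{A}_2(\phi_n,\xi)|\to 0$ and therefore $\|K_1\phi_n\|_{\mathcal{S}}\to 0$; this shows $K_1$ is compact.

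For part (2), compactness of $K_1$ makes $I+K_1$ a Fredholm operator of index zero, so it is an isomorphism as soon as injectivity holds, and this injectivity is the crux. If $(I+K_1)\phi=0$, then $\mathcal{A}_1\big((I+K_1)\phi,\xi\big)=\mathcal{M}(\nabla\phi,\nabla\xi)=0$ for all $\xi\in\mathcal{S}$. Taking $\xi=\phi$ and separating the real and imaginary parts of $\mathcal{M}(\nabla\phi,\nabla\phi)=0$, I would use $\Im\varepsilon_r\ge 0$ together with the sign property $\mathrm{i}k\langle G_e^2(\hat\bx\times\nabla\phi),\overline{(\nabla\phi)}_{\mathrm T}\rangle\ge 0$ from Lemma \ref{lem:proper1}(3)(ii) to force the dissipation and the outgoing-flux contributions to vanish; the exterior potential field $\nabla\phi$ in $B_R\backslash\overline\Omega$ then carries zero far field, and a Rellich-type identity together with unique continuation, exactly as in the proof of Theorem \ref{th:unique}, yields $\nabla\phi\equiv 0$. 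Hence $\phi$ is constant and, by the zero-mean constraint defining $\mathcal{S}$, $\phi=0$.

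Granting the isomorphism, I would conclude as follows: coercivity of $\mathcal{A}_1$ and Lax--Milgram provide a unique $b\in\mathcal{S}$ with $\mathcal{A}_1(b,\phi)=\mathcal{F}(\phi)$ for all $\phi\in\mathcal{S}$, and then $a:=(I+K_1)^{-1}b$ satisfies $\mathcal{M}(\nabla a,\nabla\phi)=\mathcal{A}_1\big((I+K_1)a,\phi\big)=\mathcal{A}_1(b,\phi)=\mathcal{F}(\phi)$, with uniqueness inherited from injectivity of $I+K_1$. I expect the injectivity step to be the main obstacle: it is the only place where the global features of the scattering problem, namely the radiation condition encoded in $G_e$ and the dissipativity $\Im\varepsilon_r\ge 0$ of the medium, must be brought in, whereas coercivity of $\mathcal{A}_1$ and compactness of $K_1$ are standard once Lemma \ref{lem:proper1} is available.
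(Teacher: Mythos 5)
Your overall architecture is exactly the one the paper intends: the paper itself gives no argument for this lemma, deferring in one line to \cite[Theorem 10.2]{Monk2002}, and your reconstruction---boundedness and coercivity of $\mathcal{A}_1$ via Lemma \ref{lem:proper1}(1) plus a Poincar\'e inequality from the zero-mean constraint in \eqref{eq:S}, a Lax--Milgram representation producing $K_1$, compactness of $K_1$ from the compactness of $G_e+\mathrm{i}\widetilde G_e$ in Lemma \ref{lem:proper1}(2), and the Fredholm alternative reducing part (2) to injectivity of $I+K_1$---is precisely that route. Part (1) and the solvability mechanics of part (2) (the construction $a=(I+K_1)^{-1}b$ with $\mathcal{A}_1(\nabla b,\nabla\phi)=\mathcal{F}(\phi)$) are correct as you present them.

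The step that does not survive scrutiny as written is your injectivity argument. If $\mathcal{M}(\nabla\phi,\nabla\xi)=0$ for all $\xi\in\mathcal{S}$, the field $\nabla\phi$ is curl-free and satisfies only the projected scalar equation $\nabla\cdot(\varepsilon_r\nabla\phi)=0$ in $B_R\backslash\overline D$; it is \emph{not} a solution of the Maxwell system in the annulus, so the proof of Theorem \ref{th:unique} cannot be applied ``exactly'' to it. In particular, the unique continuation principle you invoke is the one for the Maxwell system; for the scalar operator $\nabla\cdot(\varepsilon_r\nabla\,\cdot)$ with merely $L^\infty$, anisotropic, complex-valued $\varepsilon_r$, no unique continuation principle is available, so that clause of your argument would fail. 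Moreover, Lemma \ref{lem:proper1}(3)(ii) only gives a one-sided sign for the $G_e^2$ piece and does not by itself deliver the equality case you need. The repair is standard and stays inside your outline: take $\xi=\phi$ and observe that $\mathrm{i}k\big<G_e(\hat\bx\times\nabla\phi),\overline{(\nabla\phi)}_{\mathrm T}\big>$ equals, up to sign, the boundary flux $\int_{\partial B_R}\hat\bx\cdot(\bE^s\times\overline{\bH}^s)\,\mathrm{d}\sigma$ of the \emph{exterior radiating extension} $(\bE^s,\bH^s)$ with tangential trace $\hat\bx\times\nabla\phi$ (this is how $G_e$ is defined in \eqref{eq:Calderon}); the imaginary part of $\mathcal{M}(\nabla\phi,\nabla\phi)=0$ together with $\Im\varepsilon_r\geq 0$ from \eqref{ineq:gamma} shows this real flux is nonpositive, and the Rellich-type lemma \cite[Theorem 6.11]{Colton2013}---applied to the exterior extension, not to $\nabla\phi$---forces that extension to vanish, whence $\hat\bx\times\nabla\phi={\bf 0}$ on $\partial B_R$. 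No continuation into $B_R$ is then needed: the boundary term in $\mathcal{M}(\nabla\phi,\nabla\phi)=0$ drops out, the real part and $\Re\varepsilon_r\geq\gamma>0$ give $\nabla\phi\equiv{\bf 0}$ in $B_R\backslash\overline D$, and the mean-zero constraint in \eqref{eq:S} yields $\phi=0$. With this substitution your proof is complete and coincides with the argument the paper points to.
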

\begin{proof}
The proof follows a similar argument to that presented in \cite[Theorem 10.2]{Monk2002}, thus we omit the details.
\end{proof}

Before further analyzing $\mathrm{X}_0$ and $\nabla\mathcal{S}$, we introduce some significant auxiliary lemmas.
\begin{lem}\label{lem:Helmotz}
The spaces $\nabla\mathcal{S}$ and $\mathrm{X}_0$ are closed subspaces of $H^1(\mathrm{curl},B_R\backslash\overline{D})$ and the space $H^1(\mathrm{curl},B_R\backslash\overline{D})$ is the direct sum of these two subspaces, namely,  $H^1(\mathrm{curl},B_R\backslash\overline{D})=\nabla\mathcal{S}\bigoplus\mathrm{X}_0$. Furthermore, the projections onto the subspaces are bounded, and for all  $\bU$ in $ \mathrm{X}_0$ and $\psi$ in $\mathcal{S}$, there exist positive constants $C_1$ and $C_2$ satisfying
{\small\beq\label{eq:HM}
C_1\|\bU+\nabla\psi\|_{H^1(\mathrm{curl},B_R\backslash\overline{D})}\leq \|\bU\|_{H^1(\mathrm{curl},B_R\backslash\overline{D})}+\|\nabla\psi\|_{H^1(\mathrm{curl},B_R\backslash\overline{D})}
\leq C_2\|\bU+\nabla\psi\|_{H^1(\mathrm{curl},B_R\backslash\overline{D})}.
\eeq}
\end{lem}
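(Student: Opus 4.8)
The plan is to realize the claimed splitting as an $\mathcal{M}$-orthogonal (Helmholtz-type) decomposition, with the projection onto $\nabla\mathcal{S}$ supplied by the unique solvability in Lemma~\ref{lem:compact1}. The first step is to rewrite the defining relation of $\mathrm{X}_0$ in \eqref{def:X0} in terms of the sesquilinear form $\mathcal{M}$. Since $\nabla\times\nabla\psi=\mathbf{0}$, the curl term in $\mathcal{M}(\bU,\nabla\psi)$ drops out; moreover the tangential projection of a gradient equals the surface gradient, $(\nabla\psi)_{\mathrm T}=\nabla_{\partial B_R}\psi$, and $\varepsilon_r=I$ on $B_R\backslash\overline{\Omega}$ by \eqref{eq:U_E}. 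Hence $\mathcal{M}(\bU,\nabla\psi)$ reduces exactly to the left-hand side of the constraint in \eqref{def:X0}, so that
$$\mathrm{X}_0=\big\{\bU\in H^1(\mathrm{curl},B_R\backslash\overline{D})\ \big|\ \mathcal{M}(\bU,\nabla\psi)=0\ \text{ for all }\ \psi\in\mathcal{S}\big\}.$$
This identifies $\mathrm{X}_0$ as the $\mathcal{M}$-annihilator of $\nabla\mathcal{S}$, which is the structural fact driving the whole argument.

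Next I would construct the decomposition. Given $\bU\in H^1(\mathrm{curl},B_R\backslash\overline{D})$, I seek $\psi\in\mathcal{S}$ with $\bU-\nabla\psi\in\mathrm{X}_0$; by the reformulation this means $\mathcal{M}(\nabla\psi,\nabla\phi)=\mathcal{M}(\bU,\nabla\phi)$ for all $\phi\in\mathcal{S}$. The map $\phi\mapsto\mathcal{M}(\bU,\nabla\phi)$ is a bounded conjugate-linear functional on $\mathcal{S}$ because $\mathcal{M}$ is bounded. Invoking Lemma~\ref{lem:compact1}, where $\mathcal{M}(\nabla\cdot,\nabla\cdot)=\mathcal{A}_1((I+K_1)\cdot,\cdot)$ with $\mathcal{A}_1$ coercive and $I+K_1$ an isomorphism on $\mathcal{S}$, this variational problem has a unique solution $\psi\in\mathcal{S}$. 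Setting $\bU_0:=\bU-\nabla\psi\in\mathrm{X}_0$ yields the splitting $\bU=\bU_0+\nabla\psi$, and I would define the candidate projections $P\bU:=\nabla\psi$ and $(I-P)\bU:=\bU_0$.

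I would then check directness and closedness together with the quantitative bounds. For directness, if $\nabla\psi\in\mathrm{X}_0\cap\nabla\mathcal{S}$, then $\psi$ solves the homogeneous problem $\mathcal{M}(\nabla\psi,\nabla\phi)=0$ for all $\phi$, so $\psi=0$ by the uniqueness in Lemma~\ref{lem:compact1}; since the mean-zero condition in \eqref{eq:S} makes $\psi\mapsto\nabla\psi$ injective on $\mathcal{S}$, we get $\nabla\psi=\mathbf{0}$, hence $\mathrm{X}_0\cap\nabla\mathcal{S}=\{\mathbf{0}\}$. For boundedness of $P$, writing $\psi=(I+K_1)^{-1}b$ with $\mathcal{A}_1(b,\phi)=\mathcal{M}(\bU,\nabla\phi)$, coercivity of $\mathcal{A}_1$ gives $\|b\|_{\mathcal{S}}\le C\|\bU\|_{H^1(\mathrm{curl},B_R\backslash\overline{D})}$, boundedness of $(I+K_1)^{-1}$ gives the same bound for $\|\psi\|_{\mathcal{S}}$, and $\|\nabla\psi\|_{H^1(\mathrm{curl},B_R\backslash\overline{D})}\le\|\psi\|_{\mathcal{S}}$ because the curl of a gradient vanishes. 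Thus $P$ and $I-P$ are bounded projections, so their ranges $\nabla\mathcal{S}$ and $\mathrm{X}_0$ are closed and $H^1(\mathrm{curl},B_R\backslash\overline{D})=\nabla\mathcal{S}\oplus\mathrm{X}_0$. Finally, for $\bU\in\mathrm{X}_0$ and $\psi\in\mathcal{S}$, the left inequality in \eqref{eq:HM} is the triangle inequality, while the right one follows from $\|\bU\|=\|(I-P)(\bU+\nabla\psi)\|\le\|I-P\|\,\|\bU+\nabla\psi\|$ and $\|\nabla\psi\|=\|P(\bU+\nabla\psi)\|\le\|P\|\,\|\bU+\nabla\psi\|$, giving $C_2=\|P\|+\|I-P\|$.

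The main obstacle is the solvability of the projection problem $\mathcal{M}(\nabla\psi,\nabla\phi)=\mathcal{M}(\bU,\nabla\phi)$: the form $\mathcal{M}(\nabla\cdot,\nabla\cdot)$ fails to be coercive because of the $-k^2$ mass term and the Calder\'{o}n contribution, so Lax--Milgram does not apply directly. This is precisely what Lemma~\ref{lem:compact1} resolves, encoding the Fredholm argument that converts the compact perturbation $\mathcal{A}_2$ into the isomorphism $I+K_1$; once that is available, the remaining steps are the bookkeeping above. A secondary point requiring care is verifying that $\mathcal{M}(\bU,\nabla\psi)$ genuinely collapses to the constraint in \eqref{def:X0}, namely the identification $(\nabla\psi)_{\mathrm T}=\nabla_{\partial B_R}\psi$ on $\partial B_R$ and the splitting of the volume integral according to \eqref{eq:U_E}.
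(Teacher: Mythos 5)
Your proposal is correct and follows essentially the same route as the paper's proof: identify $\mathrm{X}_0$ as the $\mathcal{M}$-annihilator of $\nabla\mathcal{S}$, solve $\mathcal{M}(\nabla p,\nabla\phi)=\mathcal{M}(\bU,\nabla\phi)$ via the isomorphism $I+K_1$ of Lemma~\ref{lem:compact1}, set $\bU_0=\bU-\nabla p$, obtain trivial intersection from uniqueness, and deduce \eqref{eq:HM} from the boundedness of the resulting projections. If anything, you are more careful than the paper, which at this step cites Lemma~\ref{lem:proper1} where (as your argument makes clear) the solvability really comes from Lemma~\ref{lem:compact1}, and which leaves the projection-norm derivation of \eqref{eq:HM} implicit.
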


\begin{proof}

From the definition of $\mathcal{S}$ in \eqref{eq:S}, it is evident that $\nabla\mathcal{S}$ is closed. Considering that $\bU\longmapsto(\varepsilon_r\bU,\nabla\psi)$ and $\bU\longmapsto\big<G_e(\hat{\bx}\times\bU),\nabla\psi|_{\partial B_R}\big>$ are linear and bounded operators on $H^1(\mathrm{curl},B_R\backslash\overline{D})$, we can directly conclude that $\mathrm{X}_0$ is a closed subspace of $H^1(\mathrm{curl},B_R\backslash\overline{D})$.


We now aim to demonstrate that $H^1(\mathrm{curl},B_R\backslash\overline{D})=\mathrm{X}_0\oplus\nabla\mathcal{S}$. Given a fixed $\bU\in H^1(\mathrm{curl},B_R\backslash\overline{D})$, our objective is to construct $p\in\mathcal{S}$ as the solution to the following equation:
$$
\mathcal{M}(\nabla p,\nabla\xi)=\mathcal{M}(\bU,\nabla\xi)\quad \mbox{for all $\xi\in\mathcal{S}$},
$$
where $\mathcal{M}(\cdot,\cdot)$ is given by \eqref{eq:A(U,V)}.
Owing to Lemma \ref{lem:proper1}, it shows that this problem is well-posed and $p$ has the following estimate
$$
\|\nabla p\|_{L^2(B_R\backslash\overline{D})^3}\leq C\|\bU\|_{H^1(\mathrm{curl},B_R\backslash\overline{D})}\quad\mbox{for a fixed constant $C>0$}.
$$
Let $\bU_0=\bU-\nabla p$. Consequently, $\bU_0\in\mathrm{X}_0$ follows from the definition of $\mathrm{X}_0$ provided by \eqref{def:X0}. Next, our objective is to demonstrate that $\nabla\mathcal{S}\cap\mathrm{X}_0={\bf 0}$. Suppose $\bU=\nabla p\in \nabla\mathcal{S}\cap\mathrm{X}_0$, then we infer that
$$
0=\mathcal{M}(\bU,\nabla\xi)=\mathcal{M}(\nabla p,\nabla\xi) \quad\mbox{for all}\,\,\xi\in\mathcal{S}.
$$
Using Lemma \ref{lem:proper1} again, we have $p=0$ and then $\bU=\bf 0$. Thus, we prove the direct sum $H^1(\mathrm{curl},B_R\backslash\overline{D})=\nabla\mathcal{S}\bigoplus\mathrm{X}_0$.


Due to the boundedness of the projection operators $H^1(\mathrm{curl},B_R\backslash\overline{D})\hookrightarrow\nabla\mathcal{S}$ and $H^1(\mathrm{curl},B_R\backslash\overline{D})\hookrightarrow \mathrm{X}_0$, proving \eqref{eq:HM} becomes straightforward.

The proof is complete.
\end{proof}

\begin{lem}\label{lem:imbed}
The embedding operator from $\mathrm{X}_0$ into $ L^2(B_R\backslash\overline{D})^3$ is compact.
\end{lem}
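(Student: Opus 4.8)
The plan is to establish precompactness of the embedding directly: given an arbitrary bounded sequence $\{\bU_n\}$ in $\mathrm{X}_0$ (hence bounded in $H^1(\mathrm{curl},B_R\backslash\overline{D})$), I will extract a subsequence converging in $L^2(B_R\backslash\overline{D})^3$. The crucial difficulty, and the reason the full space $H^1(\mathrm{curl},B_R\backslash\overline{D})$ does \emph{not} embed compactly, is that the constraint defining $\mathrm{X}_0$ is the \emph{weighted} divergence condition $\nabla\cdot(\varepsilon_r\bU)=\mathbf{0}$ with $\varepsilon_r$ only in $L^\infty$; since $\nabla\varepsilon_r$ is not a function, one cannot extract any control of $\nabla\cdot\bU$ itself, and consequently $\bU_n$ cannot be lifted into a fractional Sobolev space $H^s$ in a coefficient-independent way. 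I will therefore split off only the part of $\bU_n$ that is governed by its curl.

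First I would control the curl part. The curls $\bg_n:=\nabla\times\bU_n$ are bounded in $L^2$ and satisfy $\nabla\cdot\bg_n=0$. Using the regularity theory for the vector-potential (div--curl) problem on $B_R\backslash\overline{D}$, whose outer boundary $\partial B_R$ is smooth and whose inner boundary $\partial D$ is Lipschitz, I construct $\bz_n\in H^s(B_R\backslash\overline{D})^3$ for some $s\in(1/2,1]$ with $\nabla\times\bz_n=\bg_n$ and $\|\bz_n\|_{H^s}\le C\|\bg_n\|_{L^2(B_R\backslash\overline{D})^3}$; this construction only sees $\nabla\times\bU_n$ and is insensitive to $\varepsilon_r$. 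By the Rellich compact embedding $H^s\hookrightarrow L^2$, the sequence $\{\bz_n\}$ is precompact in $L^2$, and, after passing to a subsequence, the traces $\hat{\bx}\times\bz_n$ converge in $H^{-1/2}_{\mathrm{div}}(\partial B_R)$ (the trace map from $H^s$, $s>1/2$, being compact into that boundary space). Since $\nabla\times(\bU_n-\bz_n)=\mathbf{0}$, I write $\bU_n-\bz_n=\nabla\varphi_n$ with $\varphi_n\in\mathcal{S}$ after normalising $\int_{\partial B_R}\varphi_n\,\mathrm{d}\sigma=0$; any finite-dimensional cohomology fields coming from the topology of $B_R\backslash\overline{D}$ are automatically precompact and can be absorbed into $\bz_n$.

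The remaining task, which is the heart of the matter, is to show that the gradient part $\{\nabla\varphi_n\}$ is itself precompact in $L^2$. Here I exploit the defining weak identity of $\mathrm{X}_0$: since $\mathcal{M}(\bU_n,\nabla\psi)=0$ for all $\psi\in\mathcal{S}$ and $\bU_n=\bz_n+\nabla\varphi_n$, the potentials satisfy
\begin{equation*}
\mathcal{M}(\nabla\varphi_n,\nabla\psi)=-\mathcal{M}(\bz_n,\nabla\psi)=:\ell_n(\psi)\qquad\text{for all }\psi\in\mathcal{S}.
\end{equation*}
By Lemma~\ref{lem:compact1} the scalar problem $\mathcal{M}(\nabla\,\cdot\,,\nabla\,\cdot\,)$ on $\mathcal{S}\times\mathcal{S}$ is well-posed, with solution operator bounded from $\mathcal{S}'$ into $\mathcal{S}$; hence it suffices to prove that $\{\ell_n\}$ is precompact in $\mathcal{S}'$. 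The functional $\ell_n$ has a volume contribution $\psi\mapsto k^2\int_{B_R\backslash\overline{D}}\varepsilon_r\bz_n\cdot\nabla\overline{\psi}\,\mathrm{d}\bx$, precompact in $\mathcal{S}'$ because $\{\bz_n\}$, and hence $\{\varepsilon_r\bz_n\}$, is precompact in $L^2$, and a boundary contribution $\psi\mapsto-\mathrm{i}k\int_{\partial B_R}G_e(\hat{\bx}\times\bz_n)\cdot\nabla_{\partial B_R}\overline{\psi}\,\mathrm{d}\sigma$, precompact in $\mathcal{S}'$ because the boundedness of $G_e$ on $H^{-1/2}_{\mathrm{div}}(\partial B_R)$ (Lemma~\ref{lem:proper1}) turns the convergent traces $\hat{\bx}\times\bz_n$ into a convergent sequence $G_e(\hat{\bx}\times\bz_n)$ in $H^{-1/2}_{\mathrm{div}}(\partial B_R)$. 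Consequently $\{\nabla\varphi_n\}$ is precompact in $L^2$, and combining this with the precompactness of $\{\bz_n\}$ shows that $\bU_n=\bz_n+\nabla\varphi_n$ has a subsequence converging in $L^2(B_R\backslash\overline{D})^3$, which is the assertion.

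I expect the main obstacle to be the curl-lifting step on the mixed smooth/Lipschitz domain combined with the rough coefficient: one must secure a uniform $H^s$ bound with $s>1/2$ for $\bz_n$ that remains valid at the Lipschitz irregularities of $\partial D$, and one must channel the entire influence of $\varepsilon_r$ and of the non-local Calder\'on condition through the well-posed scalar problem of Lemma~\ref{lem:compact1}, rather than through any direct fractional-regularity estimate for $\bU_n$, which is simply not available under the mere $\varepsilon_r\in L^\infty$ assumption.
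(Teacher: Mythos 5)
Your overall architecture is sound in outline and genuinely different from the paper's: you split $\bU_n=\bz_n+\nabla\varphi_n$ via a curl lifting, absorb the finite\--dimensional cohomology, and route the entire influence of $\varepsilon_r$ and of the non-local Calder\'on condition through the well-posed scalar problem of Lemma \ref{lem:compact1} (a Weber--Picard--Jochmann type argument, robust for $\varepsilon_r\in L^\infty$), whereas the paper takes a weakly null sequence, extends it across $\partial B_R$ by the exterior radiating solution, uses the second condition in \eqref{def:X0} to match the normal traces, and concludes via $H^{1/2+\delta}_{\mathrm{loc}}$ regularity (\cite[Thm.~3.50]{Monk2002}) and Rellich. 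However, your treatment of the boundary term contains a genuine gap. The claim that, after a subsequence, $\hat{\bx}\times\bz_n$ converges in $H^{-1/2}_{\mathrm{div}}(\partial B_R)$ because ``the trace map from $H^s$, $s>1/2$, is compact into that boundary space'' is false: the $H^{-1/2}_{\mathrm{div}}$-norm controls the surface divergence, and the lifting constraint forces $\mathrm{div}_{\partial B_R}(\hat{\bx}\times\bz_n)=-\hat{\bx}\cdot(\nabla\times\bz_n)=-\hat{\bx}\cdot(\nabla\times\bU_n)$. This is the normal trace of a sequence merely bounded in $L^2$ with vanishing divergence, so it is bounded in $H^{-1/2}(\partial B_R)$ but admits no strongly convergent subsequence in general (the normal-trace map on $H(\mathrm{div})$ is not compact); from an $H^s$ bound alone with $s<1$ the map into $H^{-1/2}_{\mathrm{div}}$ is not even bounded, and with the extra $H(\mathrm{curl})$ bound it is bounded but not compact. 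Consequently the inference that $G_e(\hat{\bx}\times\bz_n)$ converges in $H^{-1/2}_{\mathrm{div}}(\partial B_R)$, and hence that the boundary part of $\ell_n$ is precompact in $\mathcal{S}'$, is unsupported --- and this is exactly the term through which the non-local condition acts, so the gap sits at the heart of your argument.

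The step can be repaired, but it requires an idea absent from your proposal. Integrating by parts on the sphere, only $\mathrm{div}_{\partial B_R}G_e(\hat{\bx}\times\bz_n)$, paired against $\overline{\psi}$, enters $\ell_n$. Expanding $G_e$ in vector spherical wave functions one sees that $G_e$ swaps the two polarizations: $\mathrm{div}_{\partial B_R}G_e\blambda$ depends only on the surface-divergence-free part of $\blambda$ (the problematic component carrying $\hat{\bx}\cdot(\nabla\times\bU_n)$ drops out entirely), and the relevant Hankel-function quotient decays like $1/l$. Since your $\bz_n$ are bounded in $H^1$ near $\partial B_R$, so that the tangential traces are bounded in $H^{1/2}_t(\partial B_R)$, this yields a uniform $H^{1/2}(\partial B_R)$ bound for $\mathrm{div}_{\partial B_R}G_e(\hat{\bx}\times\bz_n)$ --- equivalently, by \eqref{eq:grad_BR} and the computation in the paper's proof, a bound on the normal trace of the exterior radiating field generated by $\hat{\bx}\times\bz_n$ --- whence precompactness in $H^{-1/2}(\partial B_R)$ by Rellich on the sphere, which restores the precompactness of $\ell_n$ in $\mathcal{S}'$ and completes your scheme; note that no convergence of the traces themselves is needed, only this smoothing property of the TE-to-normal component of $G_e$. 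Alternatively you could graft in the paper's extension device, which eliminates the Calder\'on term altogether: the identity $-k^2\,\hat{\bx}\cdot\bU_n=\mathrm{i}k\,\nabla_{\partial B_R}\cdot G_e(\hat{\bx}\times\bU_n)$ in \eqref{def:X0} guarantees that gluing $\bU_n$ to the exterior radiating field with the same tangential trace produces a field with $L^2$ curl and vanishing weighted divergence across $\partial B_R$, after which a div--curl compactness argument on a larger domain (or the paper's regularity route; with $\varepsilon_r\in L^\infty$ one in general only gets $H^s$ for some small $s>0$, which still suffices) applies. As written, though, the boundary-compactness step fails, so the proposal is incomplete.
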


\begin{proof}
The method used for proving the result is adapted from \cite{Monk2002}. Consider a bounded sequence $\{\bV_j\}_{j=1}^\infty\subset \mathrm{X}_0$ such that $\bV_j\rightarrow \bf 0$ weakly. Our objective is to demonstrate that $\bV_j\rightarrow \bf 0$ as $j\rightarrow\infty$ in $H^1(\mathrm{curl},B_R\backslash\overline{D})$. For this purpose, we extend $\bV_j$ into $\mathbb{R}^3\backslash\overline{B_R}$ by defining $\bV_j=\bW_j$, where $\bW_j\in H^1(\mathrm{curl},\mathbb{R}^3\backslash\overline{B_R})$ satisfies the following exterior Maxwell equation.
\begin{align*}
\left\{ \begin{array}{ll}
\nabla\times\nabla\times \bW_j-k^2\bW_j={\bf0}&\mbox{in}\ \mathbb{R}^3\backslash \overline{B_R},\quad \\[5pt]
\hat{\bx}\times\bW_j=\hat{\bx}\times\bV_j&\mbox{on}\ \partial B_R,\quad \\[5pt]
\lim\limits_{r \to\infty}r\Big(  \hat{\bx}\times (\nabla\times \bW_j)  +\mathrm{i}k \bW_j  \Big)={\bf0}.
\end{array}
\right.
\end{align*}
It follows from the definition of $\mathrm{X}_0$ that $-k^2\,\hat{\bx}\cdot\bV_j=\mathrm{i}k\nabla{\partial B_R}\cdot G_e(\hat{\bx}\times \bV_j)$ on $\partial B_R$. By utilizing the definition of $G_e$ and (3.52) in \cite{Monk2002}, we obtain
{\small\begin{align*}
\hat{\bx}\cdot\bV_j&=\frac{1}{\mathrm{i}k}\nabla_{\partial B_R}\cdot G_e(\hat{\bx}\times\bV_j)=\frac{1}{\mathrm{i}k}\nabla_{\partial B_R}\cdot \big(\frac{1}{\mathrm{i}k}\times\big(\nabla\times \bW_j\big)\big)=-\frac{1}{k^2}\nabla_{\partial B_R}\cdot \big(\hat{\bx}\times(\nabla\times\bW_j)\big)\\
&=\frac{1}{k^2}\hat{\bx}\cdot\big(\nabla\times\nabla\times \bW_j\big)=\hat{\bx}\cdot\bW_j.
\end{align*}}
Since $\nabla \cdot\bV_j=0$ in $B_R\backslash\overline{D}$ and $\mathbb{R}^3\backslash\overline{B_R}$, it is concluded from the equation above that
$$
\nabla \cdot(\varepsilon_r\bV_j)=0\quad \mbox{and}\quad \bnu\cdot\bV_j=0\quad \mbox{on}\quad \partial D.
$$
From Theorem 3.50 in \cite{Monk2002}, we have $\bV_j\in H^{1/2+\delta}_{\mathrm{loc}}(\mathbb{R}^3\backslash\overline{D})^3$ for $\delta>0$. In virtue of the compact imbedding mapping $H^{1/2}(B_R\backslash\overline{D})^3\rightarrow L^2(B_R\backslash\overline{D})^3$, there exists a subsequence of $\{\bV_j\}_{j\geq 1}$ can be extracted to convergence to $\mathbf 0$ in $L^2(B_R\backslash\overline{D})^3$.

The proof is complete.
\end{proof}

Based on the above preparations, we are in a position to provide the existence of the solution to \eqref{eq:VF}, where this solution enables us to establish corresponding estimations for the effective realization as stated in Theorem \ref{thm:main1}.

\begin{thm}\label{th:exitence1}
The scattering problem \eqref{MD:tran2} has a unique solution $(\bE,\bH,\bE^s,\bH^s)\in H^1(\mathrm{curl}, \Omega\backslash\overline{D})\times H^1(\mathrm{curl}, \Omega\backslash\overline{D})\times  H^1_{\mathrm{loc}}(\mathrm{curl}, \mathbb{R}^3\backslash\overline{\Omega})\times H^1_{\mathrm{loc}}(\mathrm{curl}, \mathbb{R}^3\backslash\overline{\Omega})$. Furthermore, one has the following estimate
\begin{align}\label{ieq:est}
&\|\bE\|_{H^1(\mathrm{curl},\Omega\backslash\overline{D})}+\|\bH\|_{H^1(\mathrm{curl},\Omega\backslash\overline{D})}+\|\bE^s\|_{H^1(\mathrm{curl}, B_R\backslash\overline{\Omega})}+\|\bH^s\|_{H^1(\mathrm{curl}, B_R\backslash\overline{\Omega})}\nonumber\\
&\qquad\quad\leq C\Big(\|\bff\|_{H^{-1/2}_{\mathrm{div}}(\partial \Omega)}+\|\bh\|_{H^{-1/2}_{\mathrm{div}}(\partial \Omega)}+\|\bq\|_{H^{-1/2}_{\mathrm{div}}(\partial D)}+\|\bJ\|_{L^2(B_R\backslash\overline{\Omega})^3}
\Big)
\end{align}
\end{thm}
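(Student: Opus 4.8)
The plan is to establish existence by applying the Fredholm alternative to the variational problem \eqref{eq:VF} and combining it with the uniqueness already obtained in Theorem \ref{th:unique}; the quantitative bound \eqref{ieq:est} will then follow from the resulting bounded inverse together with the lifting estimate \eqref{ieq:Ef} for $\bE_{\bff}$. Since the embedding $H^1(\mathrm{curl},B_R\backslash\overline{D})\hookrightarrow L^2(B_R\backslash\overline{D})^3$ is \emph{not} compact, the whole scheme hinges on the Helmholtz splitting $H^1(\mathrm{curl},B_R\backslash\overline{D})=\nabla\mathcal{S}\oplus\mathrm{X}_0$ furnished by Lemma \ref{lem:Helmotz}, which removes the gradient kernel on which $\mathcal{M}$ degenerates and confines the delicate part of the analysis to $\mathrm{X}_0$, where compactness is restored through Lemma \ref{lem:imbed}.

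First I would write $\bU=\bU_0+\nabla p$ and $\bPhi=\bPhi_0+\nabla\xi$ with $\bU_0,\bPhi_0\in\mathrm{X}_0$ and $p,\xi\in\mathcal{S}$. Using $\nabla\times\nabla\xi=\mathbf{0}$ and the defining relation \eqref{def:X0} of $\mathrm{X}_0$, one checks directly that $\mathcal{M}(\bU_0,\nabla\xi)=0$; hence testing \eqref{eq:VF} against $\nabla\xi$ collapses to the scalar problem $\mathcal{M}(\nabla p,\nabla\xi)=\mathcal{F}(\nabla\xi)$ for all $\xi\in\mathcal{S}$, which Lemma \ref{lem:compact1} solves uniquely for $p$ with a norm bound controlled by $\mathcal{F}$. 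With $p$ fixed, testing against $\bPhi_0$ reduces everything to finding $\bU_0\in\mathrm{X}_0$ such that $\mathcal{M}(\bU_0,\bPhi_0)=\mathcal{F}(\bPhi_0)-\mathcal{M}(\nabla p,\bPhi_0)$ for all $\bPhi_0\in\mathrm{X}_0$; reassembling $\bU=\bU_0+\nabla p$ afterwards recovers a solution of the full problem, because the cross terms cancel by construction.

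The core step is to solve this reduced equation on $\mathrm{X}_0$ by separating $\mathcal{M}|_{\mathrm{X}_0\times\mathrm{X}_0}$ into a coercive and a compact part. With the decomposition $G_e=G_e^1+G_e^2$ from Lemma \ref{lem:proper1}(3), I take, for $\bPhi\in\mathrm{X}_0$,
\begin{align*}
a(\bU_0,\bPhi):=\int_{B_R\backslash\overline{D}}\mu_r^{-1}(\nabla\times\bU_0)\cdot(\nabla\times\overline{\bPhi})+\bU_0\cdot\overline{\bPhi}\,\mathrm{d}\bx+\mathrm{i}k\int_{\partial B_R}G_e^2(\hat{\bx}\times\bU_0)\cdot\overline{\bPhi}_{\mathrm{T}}\,\mathrm{d}\sigma.
\end{align*}
Its real part is coercive on $\mathrm{X}_0$, since $\mu_r^{-1}\geq\gamma_2^{-1}>0$ by \eqref{ineq:gamma}, the added $L^2$ term controls $\|\bU_0\|_{L^2}$, and $\mathrm{i}k\langle G_e^2(\hat{\bx}\times\bU_0),\overline{\bU_0}_{\mathrm{T}}\rangle\geq 0$ by Lemma \ref{lem:proper1}(3)(ii). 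The remainder $\mathcal{M}-a$ is the zeroth-order term $-\int(k^2\varepsilon_r+1)\bU_0\cdot\overline{\bPhi}$ together with the $G_e^1$ boundary term; the former induces a compact operator precisely because of the compact embedding in Lemma \ref{lem:imbed}, while the latter is compact by Lemma \ref{lem:proper1}(3)(i). Thus the operator attached to $\mathcal{M}|_{\mathrm{X}_0}$ is a compact perturbation of an isomorphism, hence Fredholm of index zero, and its injectivity---which is exactly the uniqueness of Theorem \ref{th:unique} transported through the equivalence between \eqref{eq:VF} and \eqref{MD:tran2}---forces bijectivity.

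Finally I would reassemble $\bU=\bU_0+\nabla p$, invoke density of smooth test functions to verify that $\bE:=\bU|_{\Omega\backslash\overline{D}}$, $\bE^s:=\bU|_{B_R\backslash\overline{\Omega}}+\bE_{\bff}$, together with $\bH=(\mathrm{i}k\mu_r)^{-1}\nabla\times\bE$ and $\bH^s=(\mathrm{i}k)^{-1}\nabla\times\bE^s$, solve \eqref{MD:tran2}, and estimate $\|\mathcal{F}\|$ by the data through \eqref{ieq:Ef}, the mapping properties of $G_e$, and the trace theorems, which delivers \eqref{ieq:est}. I expect the G{\aa}rding (coercivity) inequality on $\mathrm{X}_0$ to be the principal obstacle: its verification relies both on the sign condition for $G_e^2$ and, decisively, on the compactness of Lemma \ref{lem:imbed}, which is what allows the sign-indefinite $k^2\varepsilon_r$ term to be absorbed as a compact perturbation---a step that would fail on the full space, since $\mathcal{M}$ exerts no control over the $L^2$ size of gradient fields.
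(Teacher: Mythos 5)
Your proposal is correct and follows essentially the same route as the paper's proof: the Helmholtz splitting $H^1(\mathrm{curl},B_R\backslash\overline{D})=\nabla\mathcal{S}\oplus\mathrm{X}_0$ from Lemma \ref{lem:Helmotz}, solving for the gradient part via Lemma \ref{lem:compact1}, a coercive-plus-compact decomposition of $\mathcal{M}$ on $\mathrm{X}_0$ using the Calder\'on splitting $G_e=G_e^1+G_e^2$ and the compact embedding of Lemma \ref{lem:imbed}, and then the Fredholm alternative combined with the uniqueness of Theorem \ref{th:unique}, followed by reassembly and the estimate via \eqref{ieq:Ef} and trace theorems. The only (immaterial) differences are that you weight the zeroth-order term in the coercive part by the identity rather than by $\varepsilon_r$ as in the paper's $\mathcal{M}_1$, and your right-hand side $\mathcal{F}(\bPhi_0)-\mathcal{M}(\nabla p,\bPhi_0)$ for the reduced problem states the cross term in its correct form.
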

\begin{proof}

For $\bU,\bV\in H^1(\mathrm{curl}, B_R\backslash\overline{D})$, the decompositions $\bU=\bU^0+\nabla p$ and $\bV=\bV^0+\nabla q$ follow Lemma \ref{lem:Helmotz}, where $\bU^0\in\mathrm{X}_0$, $\bV^0\in\mathrm{X}_0$, $\nabla p\in\nabla\mathcal{S}$ and $\nabla q\in\nabla\mathcal{S}$. From the definition of $\mathrm{X}_0$, we have $\mathcal{M}(\bU^0,\nabla \phi)=0$ for any $\phi\in\mathcal{S}$. Thus,  we can further reduce the variational  form \eqref{eq:VF}.  Find $\bU\in H^1(\mathrm{curl},B_R\backslash\overline{D})$ such that $\mathcal{M}(\bU,\bV)=\mathcal{F}(\bV)$ for all $\bV\in H^1(\mathrm{curl},B_R\backslash\overline{D})$ with respect to the following problem: find $\bU\in H^1(\mathrm{curl},B_R\backslash\overline{D})$ such that
$$
\mathcal{M}(\nabla p,\nabla q)+\mathcal{M}(\nabla p,\bV^0)+\mathcal{M}(\bU^0,\bV^0)=\mathcal{F}(\nabla q)+\mathcal{F}(\bV^0),\quad  \forall\, q\in \mathcal{S},\,\, \forall\, \bU^0\in \mathrm{X}_0.
$$
From Lemma \ref{lem:compact1}, the solution $p\in\mathcal{S}$ can be uniquely determined from the identity $\mathcal{M}(\nabla p,\nabla q)=\mathcal{F}(\nabla q)$ for any $q$ in $\mathcal{S}$.Then the variation problem  \eqref{eq:VF} degenerates into finding $\bU^0\in \mathrm{X}_0$ by solving the equation,
\begin{equation*}
\mathcal{M}(\bU^0,\bV^0)=\widetilde{\mathcal{F}}(\bV^0),\quad\forall\, q\in \mathcal{S},
\end{equation*}
where $\widetilde{\mathcal{F}}(\bV^0):=\mathcal{F}(\bV^0)-\mathcal{M}(\nabla p,\nabla q)$ is a linear operator from $\mathrm{X}_0$ to $\mathbb{C}$.
To this end, we split the sesquilinear form $\mathcal{M}(\cdot,\cdot)$ into two parts
$$
\mathcal{M}(\bU^0,\bV^0)=\mathcal{M}_1(\bU^0,\bV^0)+\mathcal{M}_2(\bU^0,\bV^0),
$$
where
\begin{align*}
&\mathcal{M}_1(\bU^0,\bV^0):=(\mu_r^{-1}\nabla\times\bU^0,\nabla\times\bV^0)_{L^2(B_R\backslash\overline{D})^3}+(\varepsilon_r\bU^0,\bV^0)_{L^2(B_R\backslash\overline{D})^3}\nonumber\\
&\qquad\qquad\qquad\qquad\qquad\qquad\qquad\qquad\quad+\mathrm{i}k\,\big<G^2_e(\hat{\bx}\times\bU^0),\bV^0_{\mathrm{T}}\big>_{L^2(\partial B_R)^3},\\
&\mathcal{M}_2(\bU^0,\bV^0):=\mathrm{i}k\,\big<G^1_e(\hat{\bx}\times\bU^0),\bV^0_{\mathrm{T}}\big>_{L^2(\partial B_R)^3}-\Big((k^2+1)\varepsilon_r\,\bU_0,\bV_0\Big)_{L^2(B_R\backslash\overline{D})^3}.
\end{align*}
From the Cauchy-Schwarz inequality, we have
$$
|\mathcal{M}_1(\bU^0,\bV^0)|\leq C_1\|\bU^0\|_{H^1(\mathrm{curl}, \Omega\backslash\overline{D})}\|\bV^0\|_{H^1(\mathrm{curl}, \Omega\backslash\overline{D})}\quad \mbox{for}\,\, C_1>0.
$$
Taking the real and imaginary parts, and then using $(ii)$ of Lemma \ref{lem:proper1}, we have
$$
|\mathcal{M}_1(\bU^0,\bU^0)|\geq C_2\|\bU^0\|^2_{H^1(\mathrm{curl}, \Omega\backslash\overline{D})}\quad \mbox{for}\,\, C_2>0.
$$

By the Lax-Milgram lemma,  $\mathcal{M}_1$ is a bijective operator. From the lemma \ref{lem:imbed}, we see that the embedding $\mathrm{X}_0\rightarrow L^2(B_R\backslash\overline{D})^3$ is compact, which together with $(i)$ from the lemma \ref{lem:proper1}, which can easily be used to imply that $\mathcal{M}_2$ is a compact operator. Then, together with Theorem \ref{th:unique}, Fredholm alternative theorem and the definition of $\mathcal{F}$ given by \eqref{eq:F(v)}, we derive that there exists a unique solution $\bU$ of \eqref{eq:VF} associated with the estimate.
\begin{align*}
\|\bU\|_{H^1(\mathrm{curl},B_R\backslash\overline{D})}\leq C\big(\|\bff\|_{H^{-1/2}_{\mathrm{div}}(\partial \Omega)}+\|\bh\|_{H^{-1/2}_{\mathrm{div}}(\partial \Omega)}+\|\bq\|_{H^{-1/2}_{\mathrm{div}}(\partial D)}+\|\bJ\|_{L^2(B_R\backslash\overline{\Omega})^3}\big).
\end{align*}
Let $\bE=\bU|_{\Omega\backslash\overline{D}}$, $\bH=\frac{\nabla\times \bE}{\mathrm{i}k}$, $\bE^s=\bU-\bE_f$ in $\mathbb{R}^3\backslash\overline{\Omega}$  and $\bH^s=\frac{\nabla\times\bE^s}{\mathrm{i}k}$. We can easily check that $(\bE,\bH,\bE^s,\bH^s)$ is the unique solution of \eqref{MD:tran2} and admits the estimation  \eqref{ieq:est}.
\end{proof}

\subsection{Auxiliary lemmas for Case 2 of Theorem \ref{thm:main1}}
In this subsection, we also provide several lemmas for proving Case 2 in Theorem \ref{thm:main1} and consider the following scattering problems: Given $\bq\in H^{-1/2}_{\mathrm{div}}(\partial D)$, $\bff\in H^{-1/2}_{\mathrm{div}}(\partial \Omega)$, $\bh\in H^{-1/2}_{\mathrm{div}}(\partial \Omega)$, and $\bJ $ with $\supp(\bJ)\Subset B_{R_0}\backslash\overline{\Omega}\Subset B_{R}\backslash\overline{\Omega}$,
find $(\bE,\bH,\bE^s,\bH^s)\in H^1(\mathrm{curl}, \Omega\backslash\overline{D})\times H^1(\mathrm{curl}, \Omega\backslash\overline{D})\times  H^1_{\mathrm{loc}}(\mathrm{curl}, \mathbb{R}^3\backslash\overline{\Omega})\times H^1_{\mathrm{loc}}(\mathrm{curl}, \mathbb{R}^3\backslash\overline{\Omega})$ satisfying
\begin{align}\label{MD:tran2-2}
\left\{ \begin{array}{ll}
\nabla\times\bE-\mathrm{i}k\mu_r\bH={\bf0},\quad \nabla\times\bH+\mathrm{i}k\varepsilon_r\bE={\bf0}&\mbox{in}\ \  \Omega\backslash \overline{D},\quad \\[5pt]
\nabla\times\bE^s-\mathrm{i}k\bH^s={\bf0},\quad \nabla\times\bH^s+\mathrm{i}k\bE^s={\bJ}& \mbox{in}\ \ \mathbb{R}^3\backslash\overline{\Omega},\\[5pt]
\bnu\times\bE-\bnu\times\bE^s=\bff,\quad \bnu\times\bH-\bnu\times\bH^s=\bh&\mbox{on} \ \ \partial{\Omega},\\[5pt]
\bnu\times\bE=\bq&\mbox{on}\ \ \partial{D},\\[5pt]
\lim\limits_{|\bx| \to\infty}\big(  \bH^s\times \bx   -|\bx| \bE^s  \big)={\bf0}.
\end{array}
 \right.
\end{align}

Similar to the system \eqref{MD:tran2}, we give a variational formulation of \eqref{MD:tran2-2} over a bounded domain by using the exterior Calder\'{o}n operator $G_e$ on the artificial boundary $\partial B_R$. We consider the following space
$$W:=\Big\{\bw\in H^1(\mathrm{curl},B_R\backslash\overline{D})\Big|\bnu\times\bw={\bf 0}\,\,\mbox{on}\,\, \partial D\Big\}.$$
Similar to \eqref{eq:Ef}--\eqref{ieq:Ef}, for the fixed $\bq\in H^{-1/2}_{\mathrm{div}}(\partial D) $, we can easily find the unique solution $\bE_0$ for the PDE system
\begin{equation*}
\begin{cases}
 \nabla\times\nabla\times \bw-k^2\bw={\bf 0}&\mbox{in}\ \ B_R\backslash\overline{D},\\
 \nabla\cdot \bw=0&\mbox{in}\ \ B_R\backslash\overline{D},\\
 \bnu\times \bw=\bq &\mbox{on}\ \ \partial D,\\
 \bnu\times \bw={\bf 0} &\mbox{on}\ \ \partial B_R,
\end{cases}
\end{equation*}
with the following estimate
\begin{align}\label{def:E0}
\|\bE_0\|_{H^1(\mathrm{curl},B_R\backslash\overline{D})}\leq C\|\bq\|_{H^{-1/2}_{\mathrm{div}}(\partial D)}\quad\mbox{for}\quad C>0,
\end{align}
where $k^2$ is not an eigenvalue of the homogeneous Dirichlet problem with $\bnu\times\bw={\bf 0}$ on
the boundary $\partial D\cup\partial B_R$.

By eliminating the magnetic fields, using the transmission conditions in \eqref{MD:tran2-2} and the definition of $G_e$, along with integrating by parts, we obtain the corresponding variational formulation of \eqref{MD:tran2-2}:
Find $\bU\in H^1(\mathrm{curl},B_R\backslash\overline{D})$ satisfying
\begin{equation}\label{eq:VF2}
\mathcal{N}(\bU-\bE_0,\bPhi)=\mathcal{Q}(\bPhi) \quad\mbox{for all}\,\,\bPhi\in W,
\end{equation}
with
\allowdisplaybreaks
\begin{align}
&\mathcal{N}(\bU-\bE_0,\bPhi):=\int_{B_R\backslash\overline{D}}\mu_r^{-1}\big[\nabla\times(\bU-\bE_0)\big]\cdot(\nabla\times\overline{\bPhi})-k^2\varepsilon_r(\bU-\bE_0)\cdot\overline{\bPhi}\,\mathrm{d}\bx\nonumber\\
&\qquad\qquad\qquad+\mathrm{i}k\int_{\partial B_R}G_e\big[\hat{\bx}\times(\bU-\bE_0)\big]\cdot\overline{\bPhi}_{\mathrm{T}}\,\mathrm{d}\sigma\label{eq:A(U,V)2}\\
&\mathcal{Q}(\bPhi):=-\int_{\partial \Omega}\mathrm{i}k\bh\cdot\overline{\bPhi}_{\mathrm{T}}\,\mathrm{d}\sigma-\mathrm{i}k\int_{\partial B_R}G_e(\hat{\bx}\times\bE_{\bff})\cdot\overline{\bPhi}_{\mathrm{T}}\,\mathrm{d}\sigma+\int_{B_R\backslash\overline{\Omega}}\mathrm{i}k \bJ\cdot\overline{\bPhi}\,\mathrm{d}\bx\nonumber\\
&\qquad\qquad-\int_{B_R\backslash\overline{\Omega}}(\nabla\times \bE_{\bff})\cdot(\nabla\times\overline{\bPhi})-k^2\bE_{\bff}\cdot\overline{\bPhi}\,\mathrm{d}\bx-\int_{\partial B_R}G_e(\hat{\bx}\times\bE_0)\cdot\overline{\bPhi}_{\mathrm{T}}\,\mathrm{d}\sigma\nonumber\\
&\qquad\qquad-\int_{B_R\backslash\overline{D}}\mu^{-1}_r\nabla\times\bE_0\cdot\nabla\times\bPhi-k^2\varepsilon_r\bE_0\cdot\overline{\bPhi}\,\mathrm{d}\bx.\label{eq:F(v)2}
\end{align}
where $\bE_{\bff}$ is given by \eqref{eq:Ef} and satisfies \eqref{ieq:Ef}, $\bp\in H^{-1/2}_{\mathrm{div}}(\partial D)$, $\bff\in H^{-1/2}_{\mathrm{div}}(\partial \Omega)$, $\bh\in H^{-1/2}_{\mathrm{div}}(\partial \Omega)$ and $\bJ $ with $\supp(\bJ)\Subset B_{R_0}\backslash\overline{\Omega}\Subset B_{R}\backslash\overline{\Omega}$ are given.
If $\bU$ is a solution of \eqref{eq:VF2}, it is straightforward to show, using sufficiently smooth test functions $\bPhi$, that $\bE|_{D}:=\bU|_{D}$ and $\bE^s:=\bU|_{B_R\backslash\overline{\Omega}}+\bE_{\bff}$ satisfy the differential equations for the electric fields of \eqref{MD:tran2-2} in $\Omega\backslash\overline{D}$ and $B_R\backslash\overline{\Omega}$, respectively. Additionally, they satisfy the transmission conditions on $\partial \Omega$ and $\hat{\bx}\times(\nabla\times\bE^s)=\mathrm{i}k G_e(\hat{\bx}\times\bE^s)$ on $\partial B_R$. A solution of the variational problem \eqref{eq:VF} and the corresponding magnetic fields $\bH^s=\frac{\nabla\times \bE^s}{\mathrm{i}k}$ and $\bH=\frac{\nabla\times \bE}{\mathrm{i}k}$ can be extended to a solution of \eqref{MD:tran2-2}. Indeed, it follows directly that $\hat{\bx}\times \bE^s$ and $\hat{\bx}\times \bH^s$ are continuous across the interface $\partial B_R$.

\begin{thm}\label{th:unique2}
The problems \eqref{MD:tran2-2} and \eqref{eq:VF2} have at most one solution.
\end{thm}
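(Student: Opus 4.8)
The plan is to mirror the uniqueness argument for the PMC problem in Theorem~\ref{th:unique}, since the PEC and PMC situations differ only through the boundary condition imposed on $\partial D$. As in that theorem it suffices to establish uniqueness for \eqref{MD:tran2-2}: a solution of the variational problem \eqref{eq:VF2} extends to a solution of \eqref{MD:tran2-2} and conversely, so the two uniqueness claims are equivalent. Accordingly I assume $(\bE,\bH,\bE^s,\bH^s)$ solves \eqref{MD:tran2-2} with homogeneous data $\bff=\bh=\bq=\bJ=\mathbf{0}$, where $\bH=(\mathrm{i}k\mu_r)^{-1}\nabla\times\bE$ and $\bH^s=(\mathrm{i}k)^{-1}\nabla\times\bE^s$, and aim to show that all four fields vanish.

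First I would take the dot product of the second-order equation $\nabla\times(\mu_r^{-1}\nabla\times\bE)=k^2\varepsilon_r\bE$ with $\overline{\bE}$ and integrate over $\Omega\backslash\overline{D}$. Applying the Green's formula of Theorem~\ref{th_auxility1} produces the volume terms $\int_{\Omega\backslash\overline{D}}\mu_r^{-1}|\nabla\times\bE|^2\,\mathrm{d}\bx-k^2\int_{\Omega\backslash\overline{D}}\varepsilon_r\bE\cdot\overline{\bE}\,\mathrm{d}\bx$ together with a boundary integral over $\partial(\Omega\backslash\overline{D})=\partial\Omega\cup\partial D$. Here lies the only genuine departure from Theorem~\ref{th:unique}: on $\partial D$ the PEC condition $\bnu\times\bE=\mathbf{0}$ forces the tangential trace $(\mu_r^{-1}\overline{\bE})_{\mathrm{T}}$ to vanish, so the $\partial D$ contribution to the boundary integral drops out, whereas in the PMC case it was the factor $\bnu\times(\nabla\times\bE)=\mathrm{i}k\mu_r(\bnu\times\bH)$ that vanished there. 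In either case only the $\partial\Omega$ term survives.

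Next I would repeat the same computation in $B_R\backslash\overline{\Omega}$ for $\bE^s$ (with $\mu_r\equiv 1$ and $\varepsilon_r\equiv I$ there), sum the two identities, and invoke the transmission conditions $\bnu\times\bE=\bnu\times\bE^s$ and $\bnu\times\bH=\bnu\times\bH^s$ on $\partial\Omega$ to cancel the interface contributions, exactly as in the derivation of \eqref{interD}. Taking the imaginary part of the resulting identity and using the sign condition $\bxi\cdot\Im\varepsilon_r\,\bxi\geq 0$ from \eqref{ineq:gamma} then yields $\Re\big(\int_{\partial B_R}(\bnu\times\overline{\bE}^s)\cdot\bH^s\,\mathrm{d}\sigma\big)\leq 0$.

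Finally, I would invoke the Rellich lemma together with the radiation condition to conclude $\bE^s=\bH^s=\mathbf{0}$ in $\mathbb{R}^3\backslash\overline{\Omega}$; the transmission conditions then reduce to continuity of the tangential traces across $\partial\Omega$, and the unique continuation principle propagates the vanishing inward to give $\bE=\bH=\mathbf{0}$ in $\Omega\backslash\overline{D}$. Given Theorem~\ref{th:unique} the argument is essentially routine; the one step demanding care, and the main (if minor) obstacle, is verifying that the PEC boundary condition, rather than the PMC one, is what annihilates the $\partial D$ boundary integral, after which the exterior estimate and the continuation step proceed verbatim.
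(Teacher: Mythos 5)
Your proposal is correct and follows exactly the route the paper intends: the paper's own proof of Theorem~\ref{th:unique2} is simply the statement that the argument of Theorem~\ref{th:unique} carries over, and you have supplied precisely that argument, correctly identifying the single substantive modification, namely that under the PEC condition $\bnu\times\bE=\mathbf{0}$ it is the vanishing of the tangential trace $\bE_{\mathrm T}$ (rather than of $\bnu\times(\mu_r^{-1}\nabla\times\bE)$ as in the PMC case) that annihilates the $\partial D$ boundary integral in the Green's-formula identity. The remaining steps (summing with the exterior identity, taking the imaginary part, applying the Rellich lemma and unique continuation) proceed verbatim as in Theorem~\ref{th:unique}, so your proof is complete and faithful to the paper.
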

\begin{proof}
The approach we adopt closely resembles the proof technique employed in establishing Theorem \ref{th:unique}, and to avoid redundancy, we omit its repetition here.
\end{proof}

Next, we will demonstrate the existence of a solution to problem \eqref{eq:VF2}, and we will also utilize the Helmholtz decomposition of the space $W$. To do this, we introduce the following spaces:
\begin{equation*}
\widetilde{\mathcal{S}}:=\Big\{\psi\in H^1(B_R\backslash\overline{D})\Big|\int_{\partial B_R}\psi\,\mathrm{d}\sigma=0\,\,\mbox{and}\,\, \bnu\times\nabla\psi={\bf0} \,\,\mbox{on}\,\,\partial D\Big\}
\end{equation*}
and
\begin{align*}
\widetilde{\mathrm{X}}_0:&=\Big\{ \widetilde{\bU}\in W\Big|-k^2\int_{\Omega\backslash\overline{D}} \varepsilon_r\widetilde{\bU}\cdot\nabla\overline{\psi}\,\mathrm{d}\bx-k^2\int_{B_R\backslash\overline{\Omega}}\widetilde{\bU}\cdot\nabla\overline{\psi}\,\mathrm{d}\bx\nonumber\\
&\qquad\qquad\qquad\qquad\qquad\qquad+\mathrm{i}k\int_{\partial B_R}G_e(\hat{\bx}\times\widetilde{\bU})\cdot\nabla_{\partial B_R}\overline{\psi}\,\mathrm{d}\sigma=0
\,\,\mbox{for all }\,\, \psi\in \mathcal{S}         \Big\}\nonumber\\
&=\Big\{\widetilde{\bU}\in W\Big|\nabla(\varepsilon_r\widetilde{\bU})=0\,\, \mbox{in}\,\, B_R\backslash\overline{D},\,\, -k^2\,\hat{\bx}\cdot\widetilde{\bU}=\mathrm{i}k\,\nabla_{\partial B_R}\cdot G_e(\hat{\bx}\times\widetilde{\bU})\,\, \nonumber\\
&\qquad\qquad\qquad\qquad\qquad\qquad\qquad\mbox{on}\,\,\partial B_R\,\,\mbox{and}\,\, \bnu\cdot\widetilde{\bU}=0\,\, \mbox{on}\,\, \partial D                                      \Big\},
\end{align*}
where $\varepsilon_r$ is given by \eqref{eq:U_E} and $\nabla_{\partial B_R}\cdot(\hat{\bx}\times\widetilde{\bU})$ is defined in \eqref{eq:grad_BR}. We can easily check that $\widetilde{\mathcal{S}}$ is a Hilbert space with the norm $\big\|\cdot\big\|_{H^1(B_R\backslash\overline{D})^3}$. Then, we introduce the following two sesquilinear forms $\widetilde{\mathcal{A}}_1(\cdot,\cdot)$ and $\widetilde{\mathcal{A}}_2(\cdot,\cdot)$ on $\widetilde{\mathcal{S}}\times\widetilde{\mathcal{S}}$ such that
$$
\mathcal{N}(\nabla\phi,\nabla\psi)=\widetilde{\mathcal{A}}_1(\nabla\phi,\nabla\psi)+\widetilde{\mathcal{A}}_2(\nabla\phi,\nabla\psi)\quad \mbox{for all}\,\, \phi,\psi\in \widetilde{\mathcal{S}},
$$
where
\begin{align*}
\widetilde{\mathcal{A}}_1(\nabla\phi,\nabla\psi):&=-k^2\int_{B_R\backslash\overline{D}}\varepsilon_r(\bx)\nabla\phi\cdot\nabla\overline{\psi}\,\mathrm{d}\bx+k^2\int_{\partial B_R}\widetilde{G}_e(\hat{\bx}\times\nabla\phi)\cdot\nabla_{\partial B_R}\overline{\psi}\mathrm{d}\sigma,\\
\widetilde{\mathcal{A}}_2(\nabla\phi,\nabla\psi):&=\mathrm{i}k\int_{\partial B_R}(G_e+\mathrm{i}\widetilde{G}_e)(\hat{\bx}\times\nabla\overline{\psi})\mathrm{d}\sigma.
\end{align*}


Similar to Lemmas \ref{lem:compact1} -Lemma \ref{lem:imbed}, there are additional lemmas that can be proven using a similar strategy. Therefore, we omit them.

\begin{lem}\label{lem:compact1-2}
The sesquilinear form $\widetilde{\mathcal{A}}_1(\cdot,\cdot):\widetilde{\mathcal{S}}\times\widetilde{\mathcal{S}}\rightarrow \mathbb{C}$ is bounded and coercive. Also, there exists a compact operator $K_2$ on $\widetilde{\mathcal{S}}$ which satisfies $\widetilde{\mathcal{A}}_2(\phi,\xi)=\widetilde{\mathcal{A}}_1(K_2\phi,\xi)$ for all $\phi$, $\xi\in \widetilde{\mathcal{S}}$. Furthermore, $I+K_2$ is an isomorphism from $\mathcal{S}$ onto itself.
\end{lem}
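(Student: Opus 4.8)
The plan is to follow verbatim the route used for Lemma~\ref{lem:compact1} (which itself mirrors \cite[Theorem 10.2]{Monk2002}), taking advantage of the fact that $\widetilde{\mathcal{S}}$ is a closed subspace of $\mathcal{S}$, cut out by the extra PEC-type constraint $\bnu\times\nabla\psi=\mathbf{0}$ on $\partial D$, while the integral expressions for $\widetilde{\mathcal{A}}_1,\widetilde{\mathcal{A}}_2$ coincide with those for $\mathcal{A}_1,\mathcal{A}_2$. Consequently every continuity and coercivity estimate proved on $\mathcal{S}$ restricts to $\widetilde{\mathcal{S}}$. First I would record boundedness of $\widetilde{\mathcal{A}}_1$, which is immediate from the Cauchy--Schwarz inequality, the bound $\varepsilon_r\in L^\infty$, and the continuity of $\widetilde{G}_e$ composed with the bounded trace $\psi\mapsto\hat{\bx}\times\nabla\psi\in H^{-1/2}_{\mathrm{div}}(\partial B_R)$.

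For coercivity I would test $\widetilde{\mathcal{A}}_1$ against $\nabla\phi$ and take the real part. The volume term gives $-k^2\int_{B_R\backslash\overline{D}}\overline{\nabla\phi}\cdot\Re\varepsilon_r\,\nabla\phi\,\mathrm{d}\bx\le -k^2\gamma\|\nabla\phi\|^2_{L^2(B_R\backslash\overline{D})^3}$ by \eqref{ineq:gamma}, while the boundary term is handled using that $\widetilde{G}_e$ is negative definite: setting $\blambda=\hat{\bx}\times\nabla\phi$ and noting $\blambda\times\hat{\bx}=\nabla_{\partial B_R}\phi$, Lemma~\ref{lem:proper1}(1) supplies $|\langle\widetilde{G}_e\blambda,\blambda\times\hat{\bx}\rangle|\ge c\|\blambda\|^2_{H^{-1/2}_{\mathrm{div}}(\partial B_R)}$ with a definite sign, so that the two contributions reinforce rather than cancel. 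This yields $|\widetilde{\mathcal{A}}_1(\nabla\phi,\nabla\phi)|\ge k^2\gamma\|\nabla\phi\|^2_{L^2}$; the mean-zero condition $\int_{\partial B_R}\psi\,\mathrm{d}\sigma=0$ then furnishes a Poincar\'e inequality on the connected Lipschitz domain $B_R\backslash\overline{D}$, so $\|\nabla\phi\|_{L^2}$ controls the full $H^1(B_R\backslash\overline{D})$-norm and coercivity on $\widetilde{\mathcal{S}}\times\widetilde{\mathcal{S}}$ follows.

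With $\widetilde{\mathcal{A}}_1$ bounded and coercive, for each fixed $\phi$ the antilinear functional $\xi\mapsto\widetilde{\mathcal{A}}_2(\phi,\xi)$ is bounded on $\widetilde{\mathcal{S}}$, so the Lax--Milgram lemma produces a unique $K_2\phi\in\widetilde{\mathcal{S}}$ with $\widetilde{\mathcal{A}}_1(K_2\phi,\xi)=\widetilde{\mathcal{A}}_2(\phi,\xi)$ for all $\xi$, and this defines a bounded linear operator $K_2$. Its compactness is inherited from that of $G_e+\mathrm{i}\widetilde{G}_e$ on $H^{-1/2}_{\mathrm{div}}(\partial B_R)$ (Lemma~\ref{lem:proper1}(2)), since $K_2$ factors as the bounded inverse $\widetilde{\mathcal{A}}_1^{-1}$ applied to the composition of the bounded trace map with this compact operator. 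Being a compact perturbation of the identity, $I+K_2$ is then Fredholm of index zero, so it is an isomorphism of $\widetilde{\mathcal{S}}$ onto itself precisely when it is injective.

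I expect the injectivity to be the main obstacle. If $(I+K_2)\phi=\mathbf{0}$, then $\mathcal{N}(\nabla\phi,\nabla\xi)=\widetilde{\mathcal{A}}_1(\phi,\xi)+\widetilde{\mathcal{A}}_2(\phi,\xi)=0$ for all $\xi\in\widetilde{\mathcal{S}}$; taking $\xi=\phi$ and using $\nabla\times\nabla\phi=\mathbf{0}$ annihilates the $\mu_r^{-1}|\nabla\times\cdot|^2$ term, leaving $-k^2\int_{B_R\backslash\overline{D}}\varepsilon_r\nabla\phi\cdot\nabla\overline{\phi}\,\mathrm{d}\bx+\mathrm{i}k\langle G_e(\hat{\bx}\times\nabla\phi),\nabla_{\partial B_R}\overline{\phi}\rangle=0$. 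Splitting off the imaginary part and invoking $\Im\varepsilon_r\ge 0$ together with the nonnegativity $\mathrm{i}k\langle G_e^2(\hat{\bx}\times\cdot),(\cdot)_{\mathrm{T}}\rangle\ge 0$ from Lemma~\ref{lem:proper1}(3), I would reproduce the Rellich-lemma and unique-continuation argument of Theorem~\ref{th:unique2} to force $\nabla\phi\equiv\mathbf{0}$, whence $\phi=0$ by the mean-zero constraint and the Poincar\'e inequality. The only bookkeeping genuinely new relative to Lemma~\ref{lem:compact1} is verifying that the PEC constraint $\bnu\times\nabla\psi=\mathbf{0}$ on $\partial D$ is consistent at each step, which holds because none of the forms $\widetilde{\mathcal{A}}_1,\widetilde{\mathcal{A}}_2,\mathcal{N}$ carry a boundary integral over $\partial D$.
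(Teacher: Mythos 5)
Your proposal is correct and takes essentially the same route as the paper, which omits this proof precisely because it is the restriction to $\widetilde{\mathcal{S}}$ of the argument for Lemma~\ref{lem:compact1}, itself deferred to \cite[Theorem 10.2]{Monk2002}: boundedness and coercivity of $\widetilde{\mathcal{A}}_1$ from $\Re\varepsilon_r\geq\gamma I$ together with the negative definiteness of $\widetilde{G}_e$ and the boundary mean-zero Poincar\'e inequality, Lax--Milgram to define $K_2$, compactness inherited from $G_e+\mathrm{i}\widetilde{G}_e$, and Fredholm plus the Rellich-type uniqueness of Theorem~\ref{th:unique2} for injectivity. One cosmetic remark: in the injectivity step the needed sign comes from $\Im\varepsilon_r\geq 0$ and Rellich's lemma alone, and once $\hat{\bx}\times\nabla\phi=\mathbf{0}$ on $\partial B_R$ the real part directly forces $\nabla\phi=\mathbf{0}$ (there is no curl term for gradients), so neither the $G^2_e$ positivity nor unique continuation is actually required --- harmless overkill, not a gap.
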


\begin{lem}\label{lem:Helmotz2}
The spaces $\nabla\widetilde{\mathcal{S}}$ and $\widetilde{\mathrm{X}}_0$ are closed subspaces of $W$ and $W=\nabla\widetilde{\mathcal{S}}\bigoplus\widetilde{\mathrm{X}}_0$. In addition, the projection operators from $W$ onto these two subspaces are both bounded, and there exist positive constants $C_1$ and $C_2$ satisfying
\begin{align*}
C_1\|\widetilde{\bU}+\nabla\psi\|_{H^1(\mathrm{curl},B_R\backslash\overline{D})}&\leq \|\widetilde{\bU}\|_{H^1(\mathrm{curl},B_R\backslash\overline{D})}+\|\nabla\psi\|_{H^1(\mathrm{curl},B_R\backslash\overline{D})}
\nonumber\\[1mm]
&\leq C_2\|\widetilde{\bU}+\nabla\psi\|_{H^1(\mathrm{curl},B_R\backslash\overline{D})}\quad \mbox{for all  $\widetilde{\bU}\in \widetilde{\mathrm{X}}_0$ and $\psi\in\widetilde{\mathcal{S}}$}.
\end{align*}
\end{lem}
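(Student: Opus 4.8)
The plan is to mirror the proof of Lemma~\ref{lem:Helmotz}, replacing the ambient space $H^1(\mathrm{curl},B_R\backslash\overline{D})$ by its closed subspace $W$, the spaces $\mathcal{S}$, $\mathrm{X}_0$ by $\widetilde{\mathcal{S}}$, $\widetilde{\mathrm{X}}_0$, and invoking Lemma~\ref{lem:compact1-2} in place of Lemma~\ref{lem:compact1} wherever well-posedness of the gradient problem is needed. First I would record that $\nabla\widetilde{\mathcal{S}}\subseteq W$: for $\psi\in\widetilde{\mathcal{S}}$ one has $\nabla\times\nabla\psi=\mathbf 0\in L^2$, so $\nabla\psi\in H^1(\mathrm{curl},B_R\backslash\overline{D})$, and the defining constraint $\bnu\times\nabla\psi=\mathbf 0$ on $\partial D$ places $\nabla\psi$ in $W$. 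The closedness of $\nabla\widetilde{\mathcal{S}}$ then follows from the zero-mean condition $\int_{\partial B_R}\psi\,\mathrm{d}\sigma=0$, which yields a Poincar\'e inequality making $\psi\mapsto\nabla\psi$ bounded below; hence a sequence $\{\nabla\psi_n\}$ converging in $H^1(\mathrm{curl})$ forces $\{\psi_n\}$ to converge in $H^1(B_R\backslash\overline{D})$ to some $\psi\in\widetilde{\mathcal{S}}$, the two constraints being stable under this convergence by trace continuity. The subspace $\widetilde{\mathrm{X}}_0$ is closed because it is the intersection of the closed space $W$ with the common kernel of the bounded linear functionals $\widetilde{\bU}\mapsto\mathcal{M}(\widetilde{\bU},\nabla\psi)$, boundedness being guaranteed by the continuity of multiplication by $\varepsilon_r$, of the Calder\'on operator $G_e$, and of the tangential traces.

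For the direct-sum decomposition $W=\nabla\widetilde{\mathcal{S}}\bigoplus\widetilde{\mathrm{X}}_0$ I would, given $\bU\in W$, construct $p\in\widetilde{\mathcal{S}}$ as the unique solution of
\begin{equation*}
\mathcal{M}(\nabla p,\nabla\xi)=\mathcal{M}(\bU,\nabla\xi)\qquad\text{for all }\xi\in\widetilde{\mathcal{S}}.
\end{equation*}
Since on gradient arguments the curl term in $\mathcal{M}$ (see \eqref{eq:A(U,V)}) drops out and the remaining form coincides with $\widetilde{\mathcal{A}}_1+\widetilde{\mathcal{A}}_2$, Lemma~\ref{lem:compact1-2} (coercivity of $\widetilde{\mathcal{A}}_1$ together with the isomorphism $I+K_2$) guarantees existence, uniqueness, and the estimate $\|\nabla p\|_{L^2(B_R\backslash\overline{D})^3}\leq C\|\bU\|_{H^1(\mathrm{curl},B_R\backslash\overline{D})}$. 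Setting $\bU_0:=\bU-\nabla p$, both $\bU$ and $\nabla p$ lie in $W$, hence $\bU_0\in W$, while the defining identity gives $\mathcal{M}(\bU_0,\nabla\xi)=0$ for all $\xi$, so $\bU_0\in\widetilde{\mathrm{X}}_0$. For the triviality of the intersection, if $\nabla p\in\nabla\widetilde{\mathcal{S}}\cap\widetilde{\mathrm{X}}_0$ then $\mathcal{M}(\nabla p,\nabla\xi)=0$ for all $\xi\in\widetilde{\mathcal{S}}$; testing with $\xi=p$ and using once more the coercivity/isomorphism from Lemma~\ref{lem:compact1-2} forces $p=0$, whence $\nabla p=\mathbf 0$.

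Finally, the map $\bU\mapsto\nabla p$ is, by the above estimate, a bounded projection of $W$ onto $\nabla\widetilde{\mathcal{S}}$, and $\bU\mapsto\bU_0$ is the complementary bounded projection onto $\widetilde{\mathrm{X}}_0$; the stated norm equivalence then follows at once, the lower bound $C_1\|\widetilde{\bU}+\nabla\psi\|\leq\|\widetilde{\bU}\|+\|\nabla\psi\|$ being the triangle inequality and the upper bound being exactly the boundedness of the two projections applied to $\widetilde{\bU}+\nabla\psi$. The step I expect to carry the real weight is the well-posedness of the gradient problem over the constrained space $\widetilde{\mathcal{S}}$; since Lemma~\ref{lem:compact1-2} is available this is immediate, and the only genuinely new point relative to Lemma~\ref{lem:Helmotz} is keeping the whole decomposition inside $W$, i.e. verifying both $\nabla\widetilde{\mathcal{S}}\subseteq W$ and that $\bU_0=\bU-\nabla p$ again obeys the boundary constraint on $\partial D$ --- both of which are secured by having built $\bnu\times\nabla\psi=\mathbf 0$ into the definition of $\widetilde{\mathcal{S}}$.
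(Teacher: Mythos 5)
Your proposal is correct and coincides with the paper's intended argument: the paper omits the proof of this lemma, stating only that it follows by the same strategy as Lemma~\ref{lem:Helmotz}, and that is exactly what you carry out, with $W$, $\widetilde{\mathcal{S}}$, $\widetilde{\mathrm{X}}_0$ and Lemma~\ref{lem:compact1-2} replacing their Case-1 counterparts. Your consistent use of $\widetilde{\mathcal{S}}$ as the test space defining $\widetilde{\mathrm{X}}_0$ (the paper's displayed definition writes ``$\psi\in\mathcal{S}$'', apparently a typo) is the reading under which the direct sum inside $W$ actually holds, and your explicit checks that $\nabla\widetilde{\mathcal{S}}\subseteq W$ and that $\bU_0=\bU-\nabla p$ stays in $W$ settle the only genuinely new point relative to Lemma~\ref{lem:Helmotz}.
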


\begin{lem}\label{lem:imbed2}
The embedding operator from $\widetilde{\mathrm{X}}_0$ into $ L^2(B_R\backslash\overline{D})^3$ is compact.
\end{lem}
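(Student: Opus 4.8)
The plan is to transcribe the proof of Lemma \ref{lem:imbed} to the space $\widetilde{\mathrm{X}}_0\subset W$, the essential change being that the obstacle boundary now carries the tangential (PEC-type) condition $\bnu\times\widetilde{\bU}={\bf0}$ on $\partial D$ inherited from $W$, in addition to the normal condition $\bnu\cdot\widetilde{\bU}=0$ built into $\widetilde{\mathrm{X}}_0$. I would begin by fixing a sequence $\{\widetilde{\bV}_j\}_{j\geq1}\subset\widetilde{\mathrm{X}}_0$ that is bounded in $H^1(\mathrm{curl},B_R\backslash\overline{D})$ and converges weakly to ${\bf0}$, and reduce the claim to producing a subsequence that converges strongly to ${\bf0}$ in $L^2(B_R\backslash\overline{D})^3$. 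As in Lemma \ref{lem:imbed}, I would extend each $\widetilde{\bV}_j$ across the artificial sphere $\partial B_R$ by setting $\widetilde{\bV}_j=\widetilde{\bW}_j$ in $\mathbb{R}^3\backslash\overline{B_R}$, where $\widetilde{\bW}_j$ solves $\nabla\times\nabla\times\widetilde{\bW}_j-k^2\widetilde{\bW}_j={\bf0}$ with $\hat{\bx}\times\widetilde{\bW}_j=\hat{\bx}\times\widetilde{\bV}_j$ on $\partial B_R$ and the Silver--M\"uller radiation condition.

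Next I would exploit the defining relation of $\widetilde{\mathrm{X}}_0$, namely $-k^2\,\hat{\bx}\cdot\widetilde{\bV}_j=\mathrm{i}k\,\nabla_{\partial B_R}\cdot G_e(\hat{\bx}\times\widetilde{\bV}_j)$ on $\partial B_R$. Running the same computation as in Lemma \ref{lem:imbed} (using the definition of $G_e$ and \eqref{eq:grad_BR}) gives $\hat{\bx}\cdot\widetilde{\bV}_j=\hat{\bx}\cdot\widetilde{\bW}_j$ on $\partial B_R$. Since the tangential components agree by construction and the normal components agree by this identity, the glued field has $L^2_{\mathrm{loc}}$ curl and is divergence-free across $\partial B_R$; inside $B_R\backslash\overline{D}$ one already has $\nabla\cdot(\varepsilon_r\widetilde{\bV}_j)=0$, while on $\partial D$ both $\bnu\times\widetilde{\bV}_j={\bf0}$ (from $W$) and $\bnu\cdot\widetilde{\bV}_j=0$ (from $\widetilde{\mathrm{X}}_0$) hold.

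With this structure in hand I would invoke the regularity result (Theorem 3.50 in \cite{Monk2002}) to conclude $\widetilde{\bV}_j\in H^{1/2+\delta}_{\mathrm{loc}}(\mathbb{R}^3\backslash\overline{D})^3$ for some $\delta>0$, uniformly bounded in $j$. The compact embedding $H^{1/2}(B_R\backslash\overline{D})^3\hookrightarrow L^2(B_R\backslash\overline{D})^3$ then extracts a subsequence converging in $L^2(B_R\backslash\overline{D})^3$, and its weak limit forces the strong limit to be ${\bf0}$, which establishes the compactness of the embedding.

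The step I expect to be the main obstacle is the regularity gain near $\partial D$: for $\mathrm{X}_0$ only the normal condition $\bnu\cdot\bU=0$ was available, whereas here the obstacle carries the tangential condition $\bnu\times\widetilde{\bU}={\bf0}$ as well, so one must check that the $H^{1/2+\delta}$ bound survives this PEC-type condition on a merely Lipschitz obstacle. The cleanest route is to observe that \emph{both} the tangential and normal traces of $\widetilde{\bV}_j$ vanish on $\partial D$, so the zero-extension of $\widetilde{\bV}_j$ into $D$ lies in $H(\mathrm{curl})\cap H(\mathrm{div})$ on a full neighbourhood; this removes the interior boundary altogether and reduces the matter to the interior elliptic regularity estimate underlying the embedding theorem, exactly as in the proof of Lemma \ref{lem:imbed}.
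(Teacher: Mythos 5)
Your proposal is correct and takes essentially the same route the paper intends: the paper actually omits this proof, remarking only that it follows the same strategy as Lemma \ref{lem:imbed}, and your argument is precisely that transcription (exterior extension $\widetilde{\bW}_j$ across $\partial B_R$, the normal-trace identity $\hat{\bx}\cdot\widetilde{\bV}_j=\hat{\bx}\cdot\widetilde{\bW}_j$ derived from the defining relation of $\widetilde{\mathrm{X}}_0$, the regularity result Theorem 3.50 of \cite{Monk2002}, and the compact embedding $H^{1/2}(B_R\backslash\overline{D})^3\hookrightarrow L^2(B_R\backslash\overline{D})^3$). Your closing observation---that the simultaneous vanishing of the tangential trace (from $W$) and the normal trace (from $\widetilde{\mathrm{X}}_0$) on $\partial D$ permits a zero extension into $D$, removing the obstacle boundary from the regularity step---is a reasonable refinement consistent with, though not spelled out in, the paper.
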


The following result is based on Lemmas \ref{lem:compact1} to
\ref{lem:imbed} and Theorem \ref{th:unique2}.

\begin{thm}\label{th:exitence2}
The scattering  problem \eqref{MD:tran2-2} has a unique solution $(\bE,\bH,\bE^s,\bH^s)\in H^1(\mathrm{curl}, \Omega\backslash\overline{D})\times H^1(\mathrm{curl}, \Omega\backslash\overline{D})\times  H^1_{\mathrm{loc}}(\mathrm{curl}, \mathbb{R}^3\backslash\overline{\Omega})\times H^1_{\mathrm{loc}}(\mathrm{curl}, \mathbb{R}^3\backslash\overline{\Omega})$. Furthermore, one has the following estimate
\begin{align}\label{ieq:est2}
&\|\bE\|_{H^1(\mathrm{curl},\Omega\backslash\overline{D})}+\|\bH\|_{H^1(\mathrm{curl},\Omega\backslash\overline{D})}+\|\bE^s\|_{H^1(\mathrm{curl}, B_R\backslash\overline{\Omega})}+\|\bH^s\|_{H^1(\mathrm{curl}, B_R\backslash\overline{\Omega})}\nonumber\\
&\qquad\qquad\leq C\Big(\|\bff\|_{H^{-1/2}_{\mathrm{div}}(\partial \Omega)}+\|\bh\|_{H^{-1/2}_{\mathrm{div}}(\partial \Omega)}+\|\bq\|_{H^{-1/2}_{\mathrm{div}}(\partial D)}+\|\bJ\|_{L^2(B_R\backslash\overline{\Omega})^3}
\Big)
\end{align}
\end{thm}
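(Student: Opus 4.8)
The plan is to follow the architecture of the proof of Theorem \ref{th:exitence1}: solve the variational problem \eqref{eq:VF2} in the space $W$ and transfer the result back to \eqref{MD:tran2-2}. The genuinely new feature of Case 2 is the inhomogeneous PEC condition $\bnu\times\bE=\bq$ on $\partial D$, which is already absorbed into \eqref{eq:VF2} via the lift $\bE_0$ of \eqref{def:E0}: since $\bnu\times\bE_0=\bq$ on $\partial D$, the shifted field $\bV:=\bU-\bE_0$ satisfies $\bnu\times\bV={\bf0}$ on $\partial D$, so the true unknown $\bV$ lies in $W$ and one seeks $\bV\in W$ with $\mathcal{N}(\bV,\bPhi)=\mathcal{Q}(\bPhi)$ for all $\bPhi\in W$. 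As a first step I would record the equivalence sketched after \eqref{eq:F(v)2}: a solution of this identity recovers, by testing against smooth $\bPhi$, the electric-field equations in $\Omega\backslash\overline{D}$ and $B_R\backslash\overline{\Omega}$, the transmission conditions on $\partial\Omega$, and the Calder\'on condition $\hat{\bx}\times(\nabla\times\bE^s)=\mathrm{i}kG_e(\hat{\bx}\times\bE^s)$ on $\partial B_R$, while the magnetic fields $\bH=(\mathrm{i}k)^{-1}\nabla\times\bE$ and $\bH^s=(\mathrm{i}k)^{-1}\nabla\times\bE^s$ extend $\bV$ to a full solution of \eqref{MD:tran2-2}.

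Next I would invoke the Helmholtz splitting $W=\nabla\widetilde{\mathcal{S}}\oplus\widetilde{\mathrm{X}}_0$ of Lemma \ref{lem:Helmotz2} to decouple the gradient part from the divergence-constrained part. Writing $\bV=\bV^0+\nabla p$ with $\bV^0\in\widetilde{\mathrm{X}}_0$ and $p\in\widetilde{\mathcal{S}}$ and testing against $\nabla q$, the very definition of $\widetilde{\mathrm{X}}_0$ annihilates the cross term $\mathcal{N}(\bV^0,\nabla q)$, so the scalar potential is determined uniquely by $\mathcal{N}(\nabla p,\nabla q)=\mathcal{Q}(\nabla q)$ for all $q\in\widetilde{\mathcal{S}}$. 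This is solvable because, by Lemma \ref{lem:compact1-2}, $\mathcal{N}(\nabla\cdot,\nabla\cdot)=\widetilde{\mathcal{A}}_1+\widetilde{\mathcal{A}}_2$ with $\widetilde{\mathcal{A}}_1$ coercive and $I+K_2$ an isomorphism. The residual problem then reduces to finding $\bV^0\in\widetilde{\mathrm{X}}_0$ with $\mathcal{N}(\bV^0,\bPhi^0)=\widetilde{\mathcal{Q}}(\bPhi^0)$ for all $\bPhi^0\in\widetilde{\mathrm{X}}_0$, where $\widetilde{\mathcal{Q}}(\bPhi^0):=\mathcal{Q}(\bPhi^0)-\mathcal{N}(\nabla p,\bPhi^0)$ is a bounded functional on $\widetilde{\mathrm{X}}_0$.

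On $\widetilde{\mathrm{X}}_0$ I would split $\mathcal{N}=\mathcal{N}_1+\mathcal{N}_2$ exactly as in Theorem \ref{th:exitence1}, collecting into $\mathcal{N}_1$ the curl--curl term, the positive mass term $(\varepsilon_r\,\cdot\,,\cdot)$, and the nonnegative boundary piece $\mathrm{i}k\langle G^2_e(\hat{\bx}\times\cdot),(\cdot)_{\mathrm{T}}\rangle$ from item (3)(ii) of Lemma \ref{lem:proper1}. Taking real and imaginary parts and using \eqref{ineq:gamma} shows $\mathcal{N}_1$ is bounded and coercive on $\widetilde{\mathrm{X}}_0$, hence bijective by Lax--Milgram. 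The remainder $\mathcal{N}_2$ combines the operator $G^1_e$, compact by Lemma \ref{lem:proper1}(3)(i), with $-\big((k^2+1)\varepsilon_r\,\cdot\,,\cdot\big)$, and the latter is compact precisely because the embedding $\widetilde{\mathrm{X}}_0\hookrightarrow L^2(B_R\backslash\overline{D})^3$ is compact by Lemma \ref{lem:imbed2}. Thus $\mathcal{N}=\mathcal{N}_1\big(I+\mathcal{N}_1^{-1}\mathcal{N}_2\big)$ is a compact perturbation of an isomorphism and the Fredholm alternative applies.

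Uniqueness is then supplied by Theorem \ref{th:unique2}: since the homogeneous problem has only the trivial solution, the Fredholm alternative upgrades injectivity to bijectivity and produces $\bV^0$, hence $\bV=\bV^0+\nabla p$ and $\bU=\bV+\bE_0$, with $\|\bU\|_{H^1(\mathrm{curl},B_R\backslash\overline{D})}\le C\,\|\widetilde{\mathcal{Q}}\|$. Bounding $\widetilde{\mathcal{Q}}$ through the explicit form \eqref{eq:F(v)2}, the continuity of $G_e$, and the lifting estimates \eqref{ieq:Ef} and \eqref{def:E0} yields the dependence on $\|\bff\|_{H^{-1/2}_{\mathrm{div}}(\partial\Omega)}$, $\|\bh\|_{H^{-1/2}_{\mathrm{div}}(\partial\Omega)}$, $\|\bq\|_{H^{-1/2}_{\mathrm{div}}(\partial D)}$ and $\|\bJ\|_{L^2(B_R\backslash\overline{\Omega})^3}$ asserted in \eqref{ieq:est2}; setting $\bE=\bU|_{\Omega\backslash\overline{D}}$ and recovering $\bE^s$ and the magnetic fields as in the proof of Theorem \ref{th:exitence1} completes the argument. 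I expect the main obstacle to be not the coercivity or compactness estimates, which transfer essentially verbatim from Case 1, but the careful bookkeeping forced by the two lifts $\bE_0$ and $\bE_{\bff}$ together with the PEC constraint built into $W$ and $\widetilde{\mathcal{S}}$: one must verify that $\widetilde{\mathcal{A}}_1$ stays coercive on the constrained space $\widetilde{\mathcal{S}}$ and that the cross terms $\mathcal{N}(\nabla p,\bPhi^0)$ and the $\bE_0$-contributions in \eqref{eq:F(v)2} are genuinely absorbable into a bounded data functional.
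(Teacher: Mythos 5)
Your proposal is correct and follows essentially the same route as the paper's own proof: the shift by the lift $\bE_0$ into the constrained space $W$, the Helmholtz splitting $W=\nabla\widetilde{\mathcal{S}}\oplus\widetilde{\mathrm{X}}_0$ from Lemma \ref{lem:Helmotz2}, the scalar-potential step via Lemma \ref{lem:compact1-2}, the decomposition $\mathcal{N}=\mathcal{N}_1+\mathcal{N}_2$ with Lax--Milgram coercivity and compactness from Lemmas \ref{lem:proper1} and \ref{lem:imbed2}, and the Fredholm alternative combined with the uniqueness of Theorem \ref{th:unique2}. Your concluding estimate, obtained by bounding $\widetilde{\mathcal{Q}}$ through \eqref{eq:F(v)2} together with the lifting bounds \eqref{ieq:Ef} and \eqref{def:E0}, matches the paper's derivation of \eqref{ieq:est2} via $\|\bU\|_{H^1(\mathrm{curl},B_R\backslash\overline{D})}\le\|\widetilde{\bU}\|_{H^1(\mathrm{curl},B_R\backslash\overline{D})}+\|\bE_0\|_{H^1(\mathrm{curl},B_R\backslash\overline{D})}$.
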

\begin{proof}
 At first, we study the variation problem below: Find $\widetilde{\bU}\in W$ satisfying
\begin{equation}\label{eq:VF2v}
\mathcal{N}(\widetilde{\bU},\bPhi)=\mathcal{Q}(\bPhi) \quad\mbox{for all}\,\,\bPhi\in W,
\end{equation}
where these sesquilinear forms $\mathcal{N}(\cdot,\cdot)$ and $\mathcal{Q}(\cdot)$ are given by \eqref{eq:A(U,V)2} and \eqref{eq:F(v)2}, respectively. From Lemma \ref{lem:Helmotz2}, we know that for any $\widetilde{\bU}\in W$ and $\widetilde{\bV}\in W$, they have these decompositions $\widetilde{\bU}=\widetilde{\bU}^0+\nabla p$ and $\widetilde{\bV}=\widetilde{\bV}^0+\nabla q$, where $\widetilde{\bU}^0\in\widetilde{\mathrm{X}}_0$, $\widetilde{\bV}^0\in\widetilde{\mathrm{X}}_0$, $\nabla p\in\nabla\widetilde{\mathcal{S}}$ and $\nabla q\in\nabla\widetilde{\mathcal{S}}$. It is a direct result that $\mathcal{N}(\widetilde{\bU}^0,\nabla \phi)=0$ for any $\phi\in\widetilde{\mathcal{S}}$ from the definition of $\widetilde{\mathrm{X}}_0$. Next, we want to find $\widetilde{\bU}\in W$ such that
$$
\mathcal{N}(\nabla p,\nabla q)+\mathcal{N}(\nabla p,\widetilde{\bV}^0)+\mathcal{N}(\widetilde{\bU}^0,\widetilde{\bV}^0)=\mathcal{Q}(\nabla q)+\mathcal{Q}(\widetilde{\bV}^0),\quad  \forall\, q\in \widetilde{\mathcal{S}},\,\, \forall\,\, \widetilde{\bU}^0\in \widetilde{\mathrm{X}}_0.
$$

Using  the lemma \ref{lem:compact1-2}, we can uniquely determine the solution $p\in\mathcal{S}$ from the identity $\mathcal{N}(\nabla p,\nabla q)=\mathcal{Q}(\nabla q)$ for any $q\in \mathcal{S}$ and then reduce the above variation problem to finding $\widetilde{\bU}^0\in \widetilde{\mathrm{X}}_0$ by solving the equation
$$
\mathcal{N}(\widetilde{\bU}^0,\widetilde{\bV}^0)=\mathcal{Q}(\widetilde{\bV}^0)-\mathcal{N}(\nabla p,\nabla q),\quad\forall\, q\in \widetilde{\mathcal{S}}.
$$
In what follows, the sesquilinear form $\mathcal{N}_1(\cdot,\cdot)$ and $\mathcal{N}_2(\cdot,\cdot)$ are given by
\begin{align*}
&\mathcal{M}_1(\widetilde{\bU}^0,\widetilde{\bV}^0):=(\mu_r^{-1}\nabla\times\widetilde{\bU}^0,\nabla\times\widetilde{\bV}^0)_{L^2(B_R\backslash\overline{D})^3}+(\varepsilon_r\widetilde{\bU}^0,\widetilde{\bV}^0)_{L^2(B_R\backslash\overline{D})^3}\nonumber\\
&\qquad\qquad\qquad\qquad\qquad\qquad\qquad\qquad\quad+\mathrm{i}k\,\big<\widetilde{G}^2_e(\hat{\bx}\times\widetilde{\bU}^0),\widetilde{\bV}^0_{\mathrm{T}}\big>_{L^2(\partial B_R)^3},\\[2pt]
&\mathcal{M}_2(\widetilde{\bU}^0,\widetilde{\bV}^0):=\mathrm{i}k\,\big<\widetilde{G}^1_e(\hat{\bx}\times\widetilde{\bU}^0),\widetilde{\bV}^0_{\mathrm{T}}\big>_{L^2(\partial B_R)^3}-\big((k^2+1)\varepsilon_r\,\widetilde{\bU}^0,\widetilde{\bV}^0\big)_{L^2(B_R\backslash\overline{D})^3},
\end{align*}
satisfying
$$
\mathcal{N}(\widetilde{\bU}^0,\widetilde{\bV}^0)=\mathcal{N}_1(\widetilde{\bU}^0,\widetilde{\bV}^0)+\mathcal{N}_2(\widetilde{\bU}^0,\widetilde{\bV}^0)=\widetilde{\mathcal{Q}}(\bV_0),
$$
where $\widetilde{\mathcal{Q}}(\bV_0):=\mathcal{Q}(\widetilde{\bV}^0)-\mathcal{N}(\nabla p,\nabla q)$, and
$\widetilde{G}^1_e$ and $\widetilde{G}^2_e$ are defined as $G^1_e$ and $G^2_e$ in Lemma \ref{lem:proper1}.
From the Cauchy-Schwarz inequality, we have that
$$
|\mathcal{N}_1(\widetilde{\bU}^0,\widetilde{\bV}^0)|\leq C_1\|\widetilde{\bU}^0\|_{H^1(\mathrm{curl}, \Omega\backslash\overline{D})}\|\widetilde{\bV}^0\|_{H^1(\mathrm{curl}, \Omega\backslash\overline{D})}\quad \mbox{for}\,\, \widetilde {C}_1>0.
$$
Taking the real and imaginary parts, and then using the lemma  \ref{lem:compact1-2},  we get that
$$
|\mathcal{N}_1(\widetilde{\bU}^0,\widetilde{\bV}^0)|\geq \widetilde{C}_2\|\widetilde{\bU}^0\|^2_{H^1(\mathrm{curl}, \Omega\backslash\overline{D})}\quad \mbox{for}\,\, \widetilde{C}_2>0.
$$

Owing to the Lax-Milgram Lemma, $\mathcal{N}_1$ is a bijective operator. From Lemma \ref{lem:imbed2}, we see that the embedding $\widetilde{\mathrm{X}}_0\rightarrow L^2(B_R\backslash\overline{D})^3$ is compact. Together with Lemma \ref{lem:proper1}, this implies that $\mathcal{N}_2$ is a compact operator. The uniqueness of $\bE_0\in H^1(\mathrm{curl},B_R\backslash\overline{D})$ satisfying \eqref{def:E0}, and based on Theorem \ref{th:unique2}, it is clear that there is at most one solution to problem \eqref{eq:VF2v}. By employing the Fredholm alternative theorem, we deduce that the problem \eqref{eq:VF2v} possesses a unique solution $\widetilde{\bU}$. Consequently, the problem \eqref{eq:VF2} also admits a unique solution $\bU=\widetilde{\bU}+\bE_0$, with the following estimate:
 \begin{align*}
\|\bU\|_{H^1(\mathrm{curl},B_R\backslash\overline{D})}&\leq \|\widetilde{\bU}\|_{H^1(\mathrm{curl},B_R\backslash\overline{D})} +\|\bE_0\|_{H^1(\mathrm{curl},B_R\backslash\overline{D})}\nonumber\\
&\leq C\big(\|\bff\|_{H^{-1/2}_{\mathrm{div}}(\partial \Omega)}+\|\bh\|_{H^{-1/2}_{\mathrm{div}}(\partial \Omega)}+\|\bq\|_{H^{-1/2}_{\mathrm{div}}(\partial D)}+\|\bJ\|_{L^2(B_R\backslash\overline{\Omega})^3}\big).
 \end{align*}
At last, let $\bE=\big(\widetilde{\bU}+\bE_0\big)\big|_{\Omega\backslash\overline{D}}$, $\bH=\frac{\nabla\times \bE}{\mathrm{i}k}$, $\bE^s=\widetilde{\bU}+\bE_0+\bE_{\bff}$ in $\mathbb{R}^3\backslash\overline{\Omega}$  and $\bH^s=\frac{\nabla\times\bE^s}{\mathrm{i}k}$. It can be easily proved that $(\bE,\bH,\bE^s,\bH^s)$ is the unique solution of \eqref{MD:tran2-2} and admits the estimate \eqref{ieq:est2}.
\end{proof}

\section{Results on the effective medium scattering problems}\label{sect:medium}


This section aims to identify a medium scattering problem related to the obstacle scattering problem \eqref{eq:sca1}, both associated with the same incident wave denoted as $\bE^i$. The corresponding electromagnetic medium $(\widetilde{\Omega}; \widetilde{\mu}_r, \widetilde{\varepsilon}_r)$ satisfies the restriction: $(\widetilde{\mu}_r,\widetilde{\varepsilon}_r)\big|_{\Omega\backslash\overline{D}}=(\mu_r,\varepsilon_r)$, and $(D; \mu_D, \varepsilon_D):=(\widetilde{\Omega}; \widetilde{\mu}_r, \widetilde{\varepsilon}_r)\big|_{D}$ is considered as a $\delta^{1/2}$-realization of the obstacle $D$ in the sense of Definition \ref{def:2}. We shall separately prove that the parameters $\mu_D$ and $\varepsilon_D$ chosen as per Theorem \ref{thm:main1} adequately in the obstacle $D$.

\subsection{Results for Case 1 of Theorem \ref{thm:main1}}
In this subsection, we mainly consider  the medium scattering system as follows:
\begin{align}\label{ME:tran1}
\left\{ \begin{array}{ll}
\nabla\times\bE_\delta-\mathrm{i}k\mu_D\bH_\delta={\bf0},\qquad \nabla\times\bH_\delta+\mathrm{i}k\varepsilon_D\bE_\delta={\bf0}&\mbox{in}\ \  D,\quad \\[5pt]
\nabla\times\bE_\delta-\mathrm{i}k\mu_r\bH_\delta={\bf0},\qquad \nabla\times\bH_\delta+\mathrm{i}k\varepsilon_r\bE_\delta={\bf0}&\mbox{in}\ \  \Omega\backslash \overline{D},\quad \\[5pt]
\nabla\times\bE_\delta^s-\mathrm{i}k\bH_\delta^s={\bf0},\qquad \ \ \nabla\times\bH_\delta^s+\mathrm{i}k\bE_\delta^s={\bJ}& \mbox{in}\ \ \mathbb{R}^3\backslash\overline{\Omega},\\[5pt]
\bnu\times\bE_\delta\big|^-_{\partial D}=\bnu\times\bE_\delta\big|^+_{\partial D},\quad \bnu\times\bH_\delta\big|^-_{\partial D}=\bnu\times\bH_\delta\big|^+_{\partial D}&\mbox{on} \ \ \partial D,\\[5pt]
\bnu\times\bE_\delta=\bnu\times(\bE_\delta^s+\bE^i),\quad \bnu\times\bH_\delta=\bnu\times(\bH_\delta^s+\bH^i)&\mbox{on} \ \ \partial{\Omega},\\[5pt]
\lim\limits_{|\bx| \to\infty}\big(  \bH_\delta^s\times \bx   -|\bx| \bE_\delta^s  \big)={\bf0},
\end{array}
 \right.
\end{align}
where $\mu_D$ and $\varepsilon_D$ are given by \eqref{eq:eff2}. The notation  \lq\lq$\cdot\big|^{\pm}{\partial D}$\rq\rq denotes the limits of certain fields from outside and inside $\partial D$, respectively. Similarly, the notation \lq\lq$\cdot\big|^{\pm}{\partial \Omega}$\rq\rq is defined analogously. By eliminating the magnetic fields and combining this with the exterior Calder\'{o}n operator $G_e$ given in \eqref{eq:Calderon}, the system \eqref{ME:tran1} is transformed into\begin{align}\label{ME:tran1_E}
\left\{ \begin{array}{ll}
\nabla\times(\mu_D^{-1}\nabla\times\bE_\delta)-k^2\varepsilon_D\bE_\delta={\bf0}&\mbox{in}\ \  D,\quad \\[5pt]
\nabla\times(\mu_r^{-1}\nabla\times\bE_\delta)-k^2\varepsilon_r\bE_\delta={\bf0},&\mbox{in}\ \  \Omega\backslash \overline{D},\quad \\[5pt]
\nabla\times(\nabla\times\bE_\delta^s)-k^2\bE_\delta^s=\mathrm{i}k\bJ & \mbox{in}\ \ B_R\backslash\overline{\Omega},\\[5pt]
\bnu\times\bE_\delta\big|^-_{\partial D}=\bnu\times\bE_\delta\big|^+_{\partial D}&\mbox{on} \ \ \partial D,\\[5pt]
\bnu\times(\mu_D^{-1}\nabla\times\bE_\delta)\big|^-_{\partial D}=\bnu\times(\mu_r^{-1}\nabla\times\bE_\delta)\big|^+_{\partial D}&\mbox{on} \ \ \partial D,\\[5pt]
\bnu\times\bE_\delta=\bnu\times(\bE_\delta^s+\bE^i)&\mbox{on} \ \ \partial{\Omega},\\[5pt] \bnu\times(\mu_r^{-1}\nabla\times\bE_\delta)\big|^-_{\partial \Omega}=\bnu\times\big(\nabla\times(\bE_\delta^s+\bE^i)\big)&\mbox{on} \ \ \partial{\Omega},\\[5pt]
\hat{\bx}\times(\nabla\times \bE^s)=\mathrm{i}k G_e(\hat{\bx}\times \bE^s)&\mbox{on} \ \ \partial{B_R}.
\end{array}
 \right.
\end{align}
\allowdisplaybreaks
The lemma below asserts that the unique solution $\bE_\delta$ of the medium scattering problem \eqref{ME:tran1_E}, restricted to the domains $D$ and $\Omega\backslash\overline{D}$, can be bounded by $\bE^{i}$ and the source $\bJ$. This lemma plays a crucial role in the proof of Theorem \ref{thm:main1}.
\begin{lem}\label{lem:mediu1}
Let $\bE_\delta$ be the solution of \eqref{ME:tran1_E} for Case 1. Then there are positive constants $C_1$ and $C_2$ such that the following estimate  holds for all $\delta\ll 1$ and sufficiently large $R$:
\begin{align}\label{inq:esti1_m}
\sqrt{\delta}\|\bE_\delta\|_{H^1(\mathrm{curl},D)}\leq C_1\big( \|\nabla\times\bE^i\|_{H^1(\mathrm{curl},B_R\backslash\overline{\Omega})}+\|\bE^i\|_{H^1(\mathrm{curl},B_R\backslash\overline{\Omega})} + \|\bJ\|_{L^2(B_R\backslash\overline{D})^3} \big)
\end{align}
and
\begin{align}\label{inq:esti2_m}
\|\bE_\delta\|_{H^1(\mathrm{curl},B_R\backslash\overline{D})}\leq C_2\big(\|\nabla\times\bE^i\|_{H^1(\mathrm{curl},B_R\backslash\overline{\Omega})} +\|\bE^i\|_{H^1(\mathrm{curl},B_R\backslash\overline{\Omega})} + \|\bJ\|_{L^2(B_R\backslash\overline{D})^3} \big).
\end{align}
\end{lem}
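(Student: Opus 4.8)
The plan is to route the entire estimate through the \emph{$\delta$-independent} a priori bound \eqref{ieq:est} of Theorem~\ref{th:exitence1}, by recognizing the restriction $\bE_\delta|_{B_R\backslash\overline{D}}$ as the solution of a genuine PMC obstacle problem. On $\Omega\backslash\overline{D}$ the field obeys $\nabla\times(\mu_r^{-1}\nabla\times\bE_\delta)-k^2\varepsilon_r\bE_\delta=\mathbf{0}$; the transmission relations on $\partial\Omega$ supply the incident data $\bff=\bnu\times\bE^i$ and $\bh=\bnu\times\bH^i$; and, since $\mu_D^{-1}=\delta$ by \eqref{eq:eff2}, the magnetic transmission on $\partial D$ reads $\bnu\times\bH_\delta|_{\partial D}=\tfrac{\delta}{\mathrm{i}k}\,\bnu\times(\nabla\times\bE_\delta)|^-_{\partial D}=:\bq$ on the outer side. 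Hence $(\bE_\delta,\bH_\delta,\bE^s_\delta,\bH^s_\delta)$ solves \eqref{MD:tran2} with data $(\bff,\bh,\bq,\bJ)$, and \eqref{ieq:est} gives, with $C$ independent of $\delta$,
\begin{equation*}
A:=\|\bE_\delta\|_{H^1(\mathrm{curl},B_R\backslash\overline{D})}\le C\big(R+\|\bq\|_{H^{-1/2}_{\mathrm{div}}(\partial D)}\big),
\end{equation*}
where $R$ denotes the right-hand side of \eqref{inq:esti2_m} and the trace bounds $\|\bff\|+\|\bh\|\le CR$ have been used.

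The remaining task is to control $\bq$ and the interior norms, for which I would use the energy identity obtained by testing the variational formulation of \eqref{ME:tran1_E} with $\bE_\delta$ itself, exactly as in Theorem~\ref{th:unique}. Writing $B:=\|\bE_\delta\|_{L^2(D)}$, the imaginary part — using the dissipation structure, namely $\Im\varepsilon_D=\tau_0>0$ in $D$, $\Im\varepsilon_r\ge0$ in $\Omega\backslash\overline{D}$, and the sign of the exterior Calderón flux furnished by Lemma~\ref{lem:proper1} — yields $k^2\tau_0 B^2\le|\mathcal{F}(\bE_\delta)|\le CRA$, since the data pair only with $\bE_\delta$ on $B_R\backslash\overline{D}$ (cf. \eqref{ieq:Ef}). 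The real part, after transferring the nonnegative curl integrals over $\Omega\backslash\overline{D}$ and $B_R\backslash\overline{\Omega}$ to the right and bounding the mass and $G_e$ terms by $C(B^2+A^2)$, leaves $\delta\|\nabla\times\bE_\delta\|_{L^2(D)}^2\le C(RA+B^2+A^2)$. Finally, the equation in $D$ gives $\nabla\times(\nabla\times\bE_\delta)=k^2\delta^{-1}\varepsilon_D\bE_\delta\in L^2(D)$, so $\nabla\times\bE_\delta\in H^1(\mathrm{curl},D)$, and the trace mapping of Theorem~\ref{th_auxility1} produces
\begin{equation*}
\|\bq\|_{H^{-1/2}_{\mathrm{div}}(\partial D)}\le C\delta\,\|\nabla\times\bE_\delta\|_{H^1(\mathrm{curl},D)}\le C\big(\sqrt{\delta}\,(\sqrt{\delta}\,\|\nabla\times\bE_\delta\|_{L^2(D)})+B\big).
\end{equation*}

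The estimate then closes by absorption. Setting $C_{\mathrm{curl}}:=\sqrt{\delta}\,\|\nabla\times\bE_\delta\|_{L^2(D)}$, the real-part bound reads $C_{\mathrm{curl}}\le C(A+B+R)$, whence $\|\bq\|\le C\sqrt{\delta}\,(A+B+R)+CB$, and so $A\le C(R+B)+C\sqrt{\delta}\,(A+B+R)$. For $\delta$ below a fixed threshold the $\sqrt{\delta}$ term is absorbed into the left-hand side, giving $A\le C(R+B)$. Substituting into $B^2\le CRA$ yields $B^2\le CR(R+B)$, hence $B\le CR$, and therefore $A\le CR$, which is \eqref{inq:esti2_m}. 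The bound \eqref{inq:esti1_m} follows at once: $\sqrt{\delta}\,\|\bE_\delta\|_{L^2(D)}=\sqrt{\delta}\,B\le B\le CR$ and $\sqrt{\delta}\,\|\nabla\times\bE_\delta\|_{L^2(D)}=C_{\mathrm{curl}}\le C(A+B+R)\le CR$.

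The principal difficulty, and the reason the argument is organized this way, is keeping every constant independent of $\delta$ despite the blow-up of $\mu_D=\delta^{-1}$. This is possible precisely because the large permeability enters as the \emph{small, favourably signed} weight $\mu_D^{-1}=\delta$ in front of $\int_D|\nabla\times\bE_\delta|^2$, and because the outer field is governed by the uniformly well-posed obstacle estimate \eqref{ieq:est} rather than by a $\delta$-dependent bound posed on the full ball. Two technical points require care: verifying that $\nabla\times\bE_\delta$ admits a tangential trace on $\partial D$ so that $\bq$ is meaningful in $H^{-1/2}_{\mathrm{div}}(\partial D)$, which relies on the interior equation, and fixing the smallness threshold on $\delta$ needed for the absorption step. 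The same scheme applies verbatim to Case~2 with the roles of the electric and magnetic data interchanged.
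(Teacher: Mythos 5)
Your proposal is correct in substance, but it closes the argument by a genuinely different route than the paper. The paper derives the same two energy inequalities you do (real and imaginary parts of the tested identity, culminating in $\delta\|\bE_\delta\|^2_{H^1(\mathrm{curl},D)}\le C\big(\|\bE_\delta\|^2_{H^1(\mathrm{curl},B_R\backslash\overline{D})}+\text{data}^2\big)$), but then proves \eqref{inq:esti2_m} by contradiction: it posits a sequence with unit data and $\|\bE_n\|_{H^1(\mathrm{curl},B_R\backslash\overline{D})}\to\infty$, rescales so that $\|\hat{\bE}_n\|_{H^1(\mathrm{curl},B_R\backslash\overline{D})}=1$ while the data tend to zero, reads the restriction as a solution of \eqref{MD:tran2} with $\bq=\bnu\times(\mu_r^{-1}\nabla\times\hat{\bE}_n)\big|^+_{\partial D}=\bnu\times(\delta\nabla\times\hat{\bE}_n)\big|^-_{\partial D}$, and lets the uniform estimate \eqref{ieq:est} of Theorem~\ref{th:exitence1} force $\|\hat{\bE}_n\|\to 0$, a contradiction. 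You perform the same identification once, for the actual solution, and close by absorption in $\sqrt{\delta}$. Your route is more quantitative (an explicit smallness threshold for $\delta$, no sequences), and your trace bound for $\bq$ — using the interior equation $\nabla\times\nabla\times\bE_\delta=k^2\mu_D\varepsilon_D\bE_\delta$ in $D$, so that $\delta\|\nabla\times\nabla\times\bE_\delta\|_{L^2(D)}=k^2|\varepsilon_D|\,\|\bE_\delta\|_{L^2(D)}$ — is more careful than the paper's corresponding line $\|\bnu\times(\delta\nabla\times\hat{\bE}_n)\|_{H^{-1/2}_{\mathrm{div}}(\partial D)}\le C\|\delta\hat{\bE}_n\|_{H^1(\mathrm{curl},D)}$, which estimates the tangential trace of $\delta\nabla\times\hat{\bE}_n$ without accounting for its curl. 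In your scheme the resulting extra term $k^2|\varepsilon_D|B$ is exactly what your dissipation estimate neutralizes, so your argument in fact shores up the paper's weakest step rather than merely paralleling it.

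One justification must be restated, because it is load-bearing. The sign you use to discard the $\partial B_R$ flux in the imaginary part is not what Lemma~\ref{lem:proper1} provides: that lemma signs only the $G^2_e$ component, while $G^1_e$ is merely compact and carries no sign, and a norm bound on the $G^1_e$ pairing would only give $O(A^2)$, after which your absorption collapses (you would be left with $A\le C(R+A)$ with a constant that cannot be made small). The correct fact is the standard nonnegativity of the radiated power, $\Re\int_{\partial B_R}\big(\bE^s_\delta\times\overline{\bH}{}^s_\delta\big)\cdot\hat{\bx}\,\mathrm{d}\sigma\ge 0$, valid here because $\supp(\bJ)\Subset B_{R_0}\backslash\overline{\Omega}\Subset B_R\backslash\overline{\Omega}$, so the flux through $\partial B_R$ equals the far-field flux; this is precisely the fact implicitly exploited in the Rellich step of Theorem~\ref{th:unique}. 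With it, your imaginary-part estimate reads $k^2\tau_0 B^2\le CR(A+R)$ rather than $CRA$ — the data--data pairings on $\partial\Omega$ contribute the additional $R^2$ — but your subsequent quadratic inequality $B^2\le CR(R+B)$ absorbs this harmlessly. With that citation repaired, the proposal is complete and yields \eqref{inq:esti1_m} and \eqref{inq:esti2_m} as claimed.
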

\begin{proof}
Multiplying  $\nabla\times(\widetilde{\mu}_r\,\nabla\times \,\bE_\delta)-k^2\widetilde{\varepsilon}_r\,\bE_\delta=\mathrm{i}k\bJ$  by $\overline{\bE}_\delta$ and integrating it over $D$ and $\Omega\backslash \overline{D}$, respectively, we get
\begin{align*}
&\int_{D}\mu_D^{-1}|\nabla\times\bE_\delta|^2\mathrm{d}\bx+\int_{\partial D}\delta\bnu\times( \nabla\times \bE_\delta)\cdot\overline{\bE}_\delta\mathrm{d}\sigma-\int_{D}k^2\varepsilon_D|\bE_\delta|^2\mathrm{d}\bx=0,\\[2mm]
&\int_{\Omega\backslash\overline{D}}\mu^{-1}_r|\nabla\times\bE_\delta|^2\mathrm{d}\bx+\int_{\partial \Omega}\bnu\times (\nabla\times\bE_\delta)\big|^-_{\partial \Omega}\cdot(\mu^{-1}_r \overline{\bE}_\delta) \mathrm{d}\sigma-\int_{\Omega\backslash\overline{D}}k^2\varepsilon_r\bE_\delta\cdot\overline{\bE}_\delta\mathrm{d}\bx\\
&\quad\qquad\qquad\qquad\qquad\qquad\qquad-\int_{\partial D}\bnu\times (\nabla\times\bE_\delta)\big|^+_{\partial D}\cdot(\mu^{-1}_r \overline{\bE}_\delta) \mathrm{d}\sigma=0.
\end{align*}
By repeating the proceudures described above, replacing $\overline{\bE}_\delta$ with $\overline{\bE}^s_\delta$, and considering the integral domain as $B_R\backslash\overline{\Omega}$, one can deduce that
\begin{align*}
&\int_{B_R\backslash\overline{\Omega}}|\nabla\times\bE^s_\delta|^2\mathrm{d}\bx+\int_{\partial B_R}\bnu\times \nabla\times\bE^s_\delta\big|_{\partial \Omega}\cdot \overline{\bE}^s_\delta \mathrm{d}\sigma-\int_{B_R\backslash\overline{\Omega}}k^2|\bE^s_\delta|^2\mathrm{d}\bx\\
&\qquad\qquad\qquad\qquad\qquad\qquad-\int_{\partial \Omega}\bnu\times (\nabla\times\bE^s_\delta)\big|^+_{\partial D}\cdot \overline{\bE}^s_\delta \mathrm{d}\sigma=\int_{B_R\backslash\overline{\Omega}}\mathrm{i}k\bJ\cdot\overline{\bE}^s_\delta\mathrm{d}\bx.
\end{align*}
By adding up these integral identities, we have
{\small\begin{align*}
&\int_{D}\mu_D^{-1}|\nabla\times\bE_\delta|^2\mathrm{d}\bx+\int_{\partial D}\mu_D^{-1}\,\bnu\times (\nabla\times \bE_\delta)\cdot\overline{\bE}_\delta\mathrm{d}\sigma-\int_{D}k^2\varepsilon_D\bE_\delta\cdot\overline{\bE}_\delta\mathrm{d}\bx\,+\int_{\Omega\backslash\overline{D}}\mu^{-1}_r|\nabla\times\bE_\delta|^2\mathrm{d}\bx\nonumber\\
&+\int_{\partial \Omega}\bnu\times (\nabla\times\bE_\delta)\big|^-_{\partial \Omega}\cdot(\mu^{-1}_r \overline{\bE}_\delta) \mathrm{d}\sigma-\int_{\Omega\backslash\overline{D}}k^2\varepsilon_r\bE_\delta\cdot\overline{\bE}_\delta\mathrm{d}\bx-\int_{\partial D}\bnu\times (\nabla\times\bE_\delta)\big|^+_{\partial D}\cdot(\mu^{-1}_r \overline{\bE}_\delta) \mathrm{d}\sigma\nonumber\\
&+\int_{B_R\backslash\overline{\Omega}}|\nabla\times\bE^s_\delta|^2\mathrm{d}\bx+\int_{\partial B_R}\bnu\times (\nabla\times\bE^s_\delta)\big|_{\partial \Omega}\cdot \overline{\bE}^s_\delta \mathrm{d}\sigma-\int_{B_R\backslash\overline{\Omega}}k^2|\bE^s_\delta|^2\mathrm{d}\bx\nonumber\\
&-\int_{\partial \Omega}\bnu\times (\nabla\times\bE^s_\delta)\big|^+_{\partial D}\cdot \overline{\bE}^s_\delta \mathrm{d}\sigma=\int_{B_R\backslash\overline{\Omega}}\mathrm{i}k\bJ\cdot\overline{\bE}^s_\delta\rmd\bx.
\end{align*}}
By using the transmission conditions on $\partial D$ and $\partial \Omega$, and the definition of $G_e$, we obtain
{\small\begin{align}\label{eq:inter3 2'}
&\mu_D^{-1}\,\int_{D}|\nabla\times\bE_\delta|^2\mathrm{d}\bx-k^2\varepsilon_D\int_{D}|\bE_\delta|^2\mathrm{d}\bx\,+\int_{\Omega\backslash\overline{D}}\mu^{-1}_r|\nabla\times\bE_\delta|^2\mathrm{d}\bx-k^2\int_{\Omega\backslash\overline{D}}\varepsilon_r\bE_\delta\cdot\overline{\bE}_\delta\mathrm{d}\bx\nonumber\\
&+\int_{B_R\backslash\overline{\Omega}}|\nabla\times\bE^s_\delta|^2\mathrm{d}\bx-\int_{B_R\backslash\overline{\Omega}}k^2|\bE^s_\delta|^2\mathrm{d}\bx
+\int_{\partial \Omega}\gamma_t(\nabla\times\bE^s_\delta )\cdot(\overline{\bE}^i)_{\mathrm{T}} \rmd\sigma+\int_{\partial \Omega}\gamma_t(\nabla\times\bE^i )\cdot(\overline{\bE}^i)_{\mathrm{T}} \rmd\sigma\nonumber\\
&+\int_{\partial \Omega}\gamma_t(\nabla\times\bE^i )\cdot(\overline{\bE}_\delta^s)_{\mathrm{T}} \rmd\sigma
+\int_{\partial B_R}\gamma_t(\nabla\times\bE^s_\delta )\cdot(\overline{\bE}_\delta^s)_{\mathrm{T}}  \rmd\sigma=\int_{B_R\backslash\overline{\Omega}}\mathrm{i}k\bJ\cdot\overline{\bE}^s_\delta\rmd\bx.
\end{align}}

From $(\widetilde{\mu}_r, \widetilde{\varepsilon}_r)\big|_{\Omega\backslash\overline{D}}=(\mu_r, \varepsilon_r)\big|_{\Omega\backslash\overline{D}}$, $(\widetilde{\mu}_r, \widetilde{\varepsilon}_r)\big|_{D}=(\mu_D, \varepsilon_D)$, and the expressions of $\mu_D$ and $\varepsilon_D$ in \eqref{eq:eff2},
Taking the real and imaginary parts of \eqref{eq:inter3 2'}, it is easy to obtain
\begin{align}
&\delta\,\|\nabla\times\bE_\delta\|^2_{L^2(D)^3}-k^2\eta_0\|\bE_\delta\|^2_{L^2(D)^3}+\int_{\Omega\backslash\overline{D}}\mu^{-1}_r|\nabla\times\bE_\delta|^2\mathrm{d}\bx-k^2\int_{\Omega\backslash\overline{D}}\Re\varepsilon_r\,\bE_\delta\cdot\overline{\bE}_\delta\mathrm{d}\bx\nonumber\\
&\,\,+\|\nabla\times\bE^s_\delta\|^2_{L^2(B_R\backslash\overline{\Omega})^3}-k^2\|\bE^s_\delta\|^2_{L^2(B_R\backslash\overline{\Omega})^3}
+\Re\int_{\partial \Omega}\gamma_t(\nabla\times\bE^s_\delta )\cdot(\overline{\bE}^i)_{\mathrm{T}} \rmd\sigma\nonumber\\
&\,\,+\Re\int_{\partial \Omega}\gamma_t(\nabla\times\bE^i )\cdot(\overline{\bE}^i)_{\mathrm{T}} \rmd\sigma+\Re\int_{\partial \Omega}\gamma_t(\nabla\times\bE^i )\cdot(\overline{\bE}_\delta^s)_{\mathrm{T}}\rmd\sigma\nonumber\\
&\,\,+\Re\int_{\partial B_R}\gamma_t(\nabla\times\bE^s_\delta )\cdot(\overline{\bE}_\delta^s)_{\mathrm{T}}  \rmd\sigma=k\Im\int_{B_R\backslash\overline{\Omega}}\bJ\cdot\overline{\bE}^s_\delta\rmd\bx.\label{eq:inter4}
\end{align}
and
\begin{align}
&-k^2\tau_0\|\bE_\delta\|^2_{L^2(D)^3}-k^2\int_{\Omega\backslash\overline{D}}\Im\varepsilon_r\,\bE_\delta\cdot\overline{\bE}_\delta\mathrm{d}\bx+\Im\int_{\partial \Omega}\gamma_t(\nabla\times\bE^s_\delta )\cdot(\overline{\bE}^i)_{\mathrm{T}} \rmd\sigma\nonumber\\
&\,\,+\Im\int_{\partial \Omega}\gamma_t(\nabla\times\bE^i )\cdot(\overline{\bE}^i)_{\mathrm{T}} \rmd\sigma+\Im\int_{\partial \Omega}\gamma_t(\nabla\times\bE^i )\cdot(\overline{\bE}_\delta^s)_{\mathrm{T}}\rmd\sigma\nonumber\\
&\,\,+\Im\int_{\partial B_R}\gamma_t(\nabla\times\bE^s_\delta )\cdot(\overline{\bE}_\delta^s)_{\mathrm{T}}  \rmd\sigma=k\Re\int_{B_R\backslash\overline{\Omega}}\bJ\cdot\overline{\bE}^s_\delta\rmd\bx. \label{eq:inter6 2}
\end{align}
The following inequalities can be derived from \eqref{eq:inter4}--\eqref{eq:inter6 2}, along with the properties of $\gamma_t$ and $\gamma_T$ provided in Theorem \ref{th_auxility1}, and the trace theorem,
\begin{align*}
 \delta\|\nabla\times\bE_\delta\|^2_{L^2(D)^3}&\leq C_1\,\Big(\|\bE_\delta\|^2_{L^2(D)^3}+\|\nabla\times\bE_\delta\|^2_{L^2(\Omega\backslash\overline{D})^3}
 +\|\bE_\delta\|_{L^2(\Omega\backslash\overline{D})^3}\nonumber\\
 &\,\,+\|\nabla\times\bE^s_\delta\|^2_{L^2(B_R\backslash\overline{\Omega})^3}+\|\nabla\times\bE^s_\delta\|_{H^1(\mathrm{curl},B_R\backslash\overline{\Omega})} \|\bE^i\|_{H^1(\mathrm{curl},B_R\backslash\overline{\Omega})}  \nonumber\\
 &\,\,+\|\bE_\delta\|^2_{L^2(B_R\backslash\overline{\Omega})^3} +   \|\nabla\times\bE^i\|_{H^1(\mathrm{curl},B_R\backslash\overline{\Omega})}  \|\bE^i\|_{H^1(\mathrm{curl},B_R\backslash\overline{\Omega})} \nonumber\\
 &\,\,+\|\nabla\times\bE^i\|_{H^1(\mathrm{curl},B_R\backslash\overline{\Omega})}  \|\bE^s_\delta\|_{H^1(\mathrm{curl},B_R\backslash\overline{\Omega})} +\|\bJ\|_{L^2(B_R\backslash\overline{\Omega})^3}\|\bE^s_\delta\|_{L^2(B_R\backslash\overline{\Omega})^3}\nonumber\\
 &\,\,+\|\nabla\times\bE^s_\delta\|_{H^1(\mathrm{curl},B_R\backslash\overline{\Omega})} \|\bE^s_\delta\|_{H^1(\mathrm{curl},B_R\backslash\overline{\Omega})}
 \Big)\nonumber\\
&\leq C_2\,\big(\|\bE_\delta\|^2_{L^2(D)^3}+\|\bE_\delta\|^2_{H^1(\mathrm{curl},B_R\backslash\overline{D})}+\|\nabla\times\bE^i\|^2_{H^1(\mathrm{curl},B_R\backslash\overline{D})}\nonumber\\
&\qquad\quad+\|\bE^i\|^2_{H^1(\mathrm{curl},B_R\backslash\overline{\Omega})}+\|\bJ\|^2_{L^2(B_R\backslash\overline{D})^3}\Big)
\end{align*}
and
 \begin{align*}
\|\bE_\delta\|^2_{L^2(D)^3}\leq& C_3\,\Big(  \|\bJ\|_{L^2(B_R\backslash\overline{\Omega})^3}\|\bE^s_\delta\|_{L^2(B_R\backslash\overline{\Omega})^3}+ \|\nabla\times\bE^s_\delta\|_{H^1(\mathrm{curl},B_R\backslash\overline{\Omega})} \|\bE^i\|_{H^1(\mathrm{curl},B_R\backslash\overline{\Omega})} \nonumber\\
&+\|\nabla\times\bE^s_\delta\|_{H^1(\mathrm{curl},B_R\backslash\overline{\Omega})} \|\bE^s_\delta\|_{H^1(\mathrm{curl},B_R\backslash\overline{\Omega})}+\|\bE_\delta\|^2_{L^2(\Omega\backslash\overline{D})^3}  \nonumber\\
&+\|\nabla\times\bE^i\|_{H^1(\mathrm{curl},B_R\backslash\overline{\Omega})} \|\bE^i\|_{H^1(\mathrm{curl},B_R\backslash\overline{\Omega})}\nonumber\\
&+\|\nabla\times\bE^i\|_{H^1(\mathrm{curl},B_R\backslash\overline{\Omega})} \|\bE^s_\delta\|_{H^1(\mathrm{curl},B_R\backslash\overline{\Omega})} \Big)\nonumber\\
 \leq& C_4\,\big( \|\bE_\delta\|^2_{L^2(\Omega\backslash\overline{D})^3} +\|\bE^i\|^2_{H^1(\mathrm{curl},B_R\backslash\overline{\Omega})}+ \|\nabla\times\bE^i\|^2_{H^1(\mathrm{curl},B_R\backslash\overline{\Omega})}\nonumber\\
  &\qquad+ \|\bJ\|^2_{L^2(B_R\backslash\overline{\Omega})^3}\big),
\end{align*}
where $C_1$, $C_2$, $C_3$, and $C_4$ are positive constants depending on $q,\,\eta_0,\tau_0, \,B_R,\, k,\,\Omega,\,\mu_r$ and $D$.
Thus, we get
\begin{align}\label{eq:inter8 2}
\delta\,\|\bE_\delta\|^2_{H^1(\mathrm{curl},D)} \leq &C_5 \,\big( \|\bE_\delta\|^2_{H^1(\mathrm{curl},B_R\backslash\overline{D})}  +\|\bE^i\|^2_{H^1(\mathrm{curl},B_R\backslash\overline{\Omega})}+ \|\nabla\times\bE^i\|^2_{H^1(\mathrm{curl},B_R\backslash\overline{\Omega})} \nonumber\\
&\qquad+\|\bJ\|^2_{L^2(B_R\backslash\overline{\Omega})^3}\big),
\end{align}
where $C_5$ is a positive constant not relying on $\delta$.
Next, we aim to prove \eqref{inq:esti2_m} by contradiction. Suppose that for any nonnegative integer $n$, there exists a set of data ${(\bE_n, \bE^i_n,\bJ_n)}$, where $\bE_n$ represents the unique solution of \eqref{ME:tran1_E} with the input data $\bJ_n$ and $\bE^i_n$, satisfying
\begin{align*}
\left\{ \begin{array}{ll}
\|\bJ_n\|_{L^2(B_R\backslash\overline{\Omega})^3}+\|\nabla\times\bE^i_n\|_{H^1(\mathrm{curl},B_R\backslash\overline{\Omega})^3}+\|\bE^i_n\|_{H^1(\mathrm{curl},B_R\backslash\overline{\Omega})^3}=1	,\quad \\[5pt]
\|\bE_n\|_{H^1(\mathrm{curl},B_R\backslash\overline{D})}\rightarrow\infty \quad \mbox{as}\quad \delta\to 0.
\end{array}
 \right.
\end{align*}
Let $\left\{(\hat{\bE}_n, \hat{\bE}^i_n,\hat{\bJ}_n)\right\}$ be given by
\begin{align*}
\left\{ \begin{array}{ll}
\hat{\bJ}_n=\dfrac{\bJ_n}{\|\bE_n\|_{H^1(\mathrm{curl},B_R\backslash\overline{D})}},\quad \hat{\bE}_n=\dfrac{\bE_n}{\|\bE_n\|_{H^1(\mathrm{curl},B_R\backslash\overline{D})}},\\[15pt]
\hat{\bE}^i_n=\dfrac{\bE^i_n}{\|\bE_n\|_{H^1(\mathrm{curl},B_R\backslash\overline{D})}},\quad \hat{\bE}^s_n=\dfrac{\bE^s_n}{\|\bE_n\|_{H^1(\mathrm{curl},B_R\backslash\overline{D})}},\\[15pt]
\bE^s_n= \bE_n-\bE^i_n,\qquad \qquad\quad\,\,\hat{\bE}^s_n= \hat{\bE}_n-\hat{\bE}^i_n.
\end{array}
 \right.
\end{align*}
It is easy to check that $\hat{\bE}_n$ solves the system \eqref{ME:tran1_E} associated  with the input data $\hat{\bE}^i_n$ and $\hat{\bJ}_n$. When $\delta$ tends to $0$, one has
\begin{align}\label{data:1 2}
 \|\hat{\bJ}_n\|_{L^2(B_R\backslash\overline{D})^3},\|\hat{\bE}^i_n\|_{H^1(\mathrm{curl},B_R\backslash\overline{D})},\|\nabla\times\hat{\bE}^i_n\|_{H^1(\mathrm{curl},B_R\backslash\overline{D})}\,\to 0 \,\,\mbox{and}\,\,\|\hat{\bE}_n\|_{H^1(\mathrm{curl},B_R\backslash\overline{D})}=1.
\end{align}
Thus, we obtain the subsequent estimation by repeating similar procedures for$(\bE_\delta, \bE^i,\bJ)$,
\begin{align*}
\delta\,\|\hat{\bE}_n\|^2_{H^1(\mathrm{curl},D)} \leq &C_6 \,\big( \|\hat{\bE}_n\|^2_{H^1(\mathrm{curl},B_R\backslash\overline{D})}  +\|\hat{\bE}^i_n\|^2_{H^1(\mathrm{curl},B_R\backslash\overline{\Omega})} +\|\nabla\times\hat{\bE}^i_n\|^2_{H^1(\mathrm{curl},B_R\backslash\overline{\Omega})}\nonumber\\
&+ \|\hat{\bJ}_n\|^2_{L^2(B_R\backslash\overline{\Omega})^3}\big),
\end{align*}
where $C_6$ is a positive constant not relying on $\delta$. Combining this with \eqref{data:1 2}, we imply
\begin{equation}\label{ieq:09}
\sqrt{\delta}\|\hat{\bE}_n\|_{H^1(\mathrm{curl},D)}\leq C_7\quad\mbox{for some positive constant $C_7$}.
\end{equation}
We note that $\hat{\bH}_n=\frac{\hat{\bE}_n}{\mathrm{i}k\mu_r}$ and $\hat{\bH}^s_n=\frac{\hat{\bE}^s_n}{\mathrm{i}k}$. Hence,  $(\hat{\bE}_n\big|_{\Omega\backslash\overline{D}},\,\hat{\bH}_n\big|_{\Omega\backslash\overline{D}},\,\hat{\bE}^s_n\big|_{\mathbb{R}^3\backslash\overline{\Omega}},\,\hat{\bH}^s_n\big|_{\mathbb{R}^3\backslash\overline{\Omega}})$ is the unique solution of \eqref{MD:tran2} with these boundary conditions $\bq=\bnu\times(\mu_r^{-1}\nabla\times \hat{\bE}_n)\Big|^+_{\partial{D}}$, $\bff=\bnu\times\hat{\bE}^i_n\big|^+_{\partial \Omega}$ and $\mathrm{i}k \bh=\bnu\times(\nabla\times\hat{\bE}^i_n)\big|^+_{\partial \Omega}$. From Theorem \ref{th:exitence1} and (\ref{ieq:09}), we have
 \begin{align*}
\|\bE\|_{H^1(\mathrm{curl},B_R\backslash\overline{D})}&\leq C_8\big(\|\bnu\times\hat{\bE}^i_n\big|^+_{\partial \Omega}\|_{H^{-1/2}_{\mathrm{div}}(\partial \Omega)}+\|\bnu\times(\nabla\times\hat{\bE}^i_n)\big|^+_{\partial \Omega}\|_{H^{-1/2}_{\mathrm{div}}(\partial \Omega)}\\
&\qquad+\|\bnu\times(\mu_r^{-1}\nabla\times \hat{\bE}_n)\big|^+_{\partial{D}}\|_{H^{-1/2}_{\mathrm{div}}(\partial D)}+\|\hat{\bJ}_n\|_{L^2(B_R\backslash\overline{\Omega})^3}
\big)\\
&=C_8\big(\|\bnu\times\hat{\bE}^i_n\big|^+_{\partial \Omega}\|_{H^{-1/2}_{\mathrm{div}}(\partial \Omega)}+\|\bnu\times(\nabla\times\hat{\bE}^i_n)\big|^+_{\partial \Omega}\|_{H^{-1/2}_{\mathrm{div}}(\partial \Omega)}\\
&\qquad+\|\bnu\times(\delta\nabla\times\hat{\bE}_n)|^{-}_{\partial D}\|_{H^{-1/2}_{\mathrm{div}}(\partial D)}+\|\hat{\bJ}_n\|_{L^2(B_R\backslash\overline{\Omega})^3}
\big)\\
&\leq C_9\big(\|\hat{\bE}^i_n\|_{H^1(\mathrm{curl},B_R\backslash\overline{\Omega})}+\|\nabla\times\hat{\bE}^i_n\|_{H^1(\mathrm{curl},B_R\backslash\overline{\Omega})}+\|\delta\hat{\bE}_n\|_{H^1(\mathrm{curl},D)}\nonumber\\
&\qquad+\|\hat{\bJ}_n\|_{L^2(B_R\backslash\overline{\Omega})^3}\big)\\
&=C_9\sqrt{\delta}\sqrt{\delta}\|\hat{\bE}_n\|_{H^1(\mathrm{curl}, D)}\leq C_9\sqrt{\delta}\to 0\quad \mbox{as}\,\,\to 0,
 \end{align*}
 which contradicts with the fact that $\|\hat{\bE}_n\|_{H^1(\mathrm{curl},B_R\backslash\overline{D})}=1$.
Here, $C_8$ and $C_9$ are positive constants not relying on $\delta$. Hence, the inequality \eqref{inq:esti2_m} holds.

In the following, we prove \eqref{inq:esti1_m}. From \eqref{inq:esti2_m} and \eqref{eq:inter8 2} it is easy to see that \eqref{inq:esti1_m} holds. We finish the proof.
\end{proof}

\subsection{Results for Case 2 of Theorem \ref{thm:main1}}
In this subsection, we also address the medium scattering system (\ref{ME:tran1}), but with parameters $\mu_D$ and $\varepsilon_D$ chosen as in (\ref{eq:eff2_2}). Analogous to Lemma \ref{lem:mediu1}, we derive the following lemma.

\begin{lem}\label{lem:mediu2}
Let $\bE_\delta$ be the solution of \eqref{ME:tran1_E} for Case 2. Then there are positive constants $\widetilde{C}_1$ and $\widetilde{C}_2$ such that the following  estimates hold for all $\delta\ll 1$ and sufficiently large $R$:
\begin{align}
&\|\bE_\delta\|_{H^1(\mathrm{curl},D)}\leq \widetilde{C}_1\delta^{1/2}\left(\|\nabla\times\bE^i\|_{H^1(\mathrm{curl},B_R\backslash\overline{D})} +\|\bE^i\|_{H^1(\mathrm{curl},B_R\backslash\overline{\Omega})} + \|\bJ\|_{L^2(B_R\backslash\overline{D})^3} \right),\label{inq:esti1_m'}\\
&\|\bE_\delta\|_{H^1(\mathrm{curl},B_R\backslash\overline{D})}\leq \widetilde{C}_2\left(\|\nabla\times\bE^i\|_{H^1(\mathrm{curl},B_R\backslash\overline{D})} +\|\bE^i\|_{H^1(\mathrm{curl},B_R\backslash\overline{\Omega})} + \|\bJ\|_{L^2(B_R\backslash\overline{D})^3} \right).\label{inq:esti2_m'}
\end{align}
\end{lem}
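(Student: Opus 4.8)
The plan is to mirror the proof of Lemma \ref{lem:mediu1}, exploiting the fact that in Case 2 the roles of the large coefficients are swapped: now $\mu_D^{-1}=\delta^{-1}$ multiplies the curl energy on $D$, while $\Im\varepsilon_D=\delta^{-1}\tau_0$ multiplies the $L^2$ energy on $D$, with $\Re\varepsilon_D=\eta_0$ staying of order one. First I would test the reduced equation $\nabla\times(\widetilde{\mu}_r^{-1}\nabla\times\bE_\delta)-k^2\widetilde{\varepsilon}_r\bE_\delta=\mathbf 0$ against $\overline{\bE}_\delta$ and integrate separately over $D$, $\Omega\backslash\overline{D}$ and $B_R\backslash\overline{\Omega}$. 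Using Green's formula (Theorem \ref{th_auxility1}), the transmission conditions on $\partial D$ and $\partial\Omega$ in \eqref{ME:tran1_E}, and the definition of $G_e$, all interior boundary integrals cancel exactly as in the derivation leading to \eqref{eq:inter8 2}, so the only change from Case 1 is the value of $\mu_D^{-1}$ and $\varepsilon_D$ in the $D$-integrals.

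Next I would split this global identity into real and imaginary parts. The imaginary part now carries the term $-k^2\delta^{-1}\tau_0\|\bE_\delta\|^2_{L^2(D)^3}$; since $\int_{\Omega\backslash\overline{D}}\Im\varepsilon_r\,\bE_\delta\cdot\overline{\bE}_\delta\ge0$ by \eqref{ineq:gamma}, and the remaining boundary and source contributions are bounded through Theorem \ref{th_auxility1} and the trace theorem by $\|\bE^i\|$, $\|\nabla\times\bE^i\|$, $\|\bJ\|$ and $\|\bE_\delta\|_{H^1(\mathrm{curl},B_R\backslash\overline{D})}$ (just as in Lemma \ref{lem:mediu1}), this forces $\delta^{-1}\|\bE_\delta\|^2_{L^2(D)^3}\le C(\|\bE_\delta\|^2_{H^1(\mathrm{curl},B_R\backslash\overline{D})}+\text{data}^2)$, i.e. $\|\bE_\delta\|^2_{L^2(D)^3}=O(\delta)$. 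Feeding this into the real part, in which $\delta^{-1}\|\nabla\times\bE_\delta\|^2_{L^2(D)^3}$ appears and the now lower-order term $k^2\eta_0\|\bE_\delta\|^2_{L^2(D)^3}$ is absorbed, gives $\|\nabla\times\bE_\delta\|^2_{L^2(D)^3}=O(\delta)$ as well. Adding the two bounds yields the key energy estimate
\begin{equation*}
\|\bE_\delta\|^2_{H^1(\mathrm{curl},D)}\le C\,\delta\big(\|\bE_\delta\|^2_{H^1(\mathrm{curl},B_R\backslash\overline{D})}+\|\bE^i\|^2_{H^1(\mathrm{curl},B_R\backslash\overline{\Omega})}+\|\nabla\times\bE^i\|^2_{H^1(\mathrm{curl},B_R\backslash\overline{\Omega})}+\|\bJ\|^2_{L^2(B_R\backslash\overline{\Omega})^3}\big),
\end{equation*}
which plays the role of \eqref{eq:inter8 2} for Case 2.

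With this in hand I would prove \eqref{inq:esti2_m'} by the same contradiction scheme as in Lemma \ref{lem:mediu1}. Assuming a normalized sequence with $\|\hat{\bE}_n\|_{H^1(\mathrm{curl},B_R\backslash\overline{D})}=1$ and vanishing data, the displayed estimate gives directly $\|\hat{\bE}_n\|_{H^1(\mathrm{curl},D)}\le C\sqrt{\delta}$. Now $(\hat{\bE}_n,\hat{\bH}_n,\hat{\bE}^s_n,\hat{\bH}^s_n)$ solves the PEC auxiliary problem \eqref{MD:tran2-2} with obstacle datum $\bq=\bnu\times\hat{\bE}_n\big|^-_{\partial D}$, together with $\bff=\bnu\times\hat{\bE}^i_n\big|^+_{\partial\Omega}$ and $\mathrm{i}k\bh=\bnu\times(\nabla\times\hat{\bE}^i_n)\big|^+_{\partial\Omega}$, so Theorem \ref{th:exitence2} applies. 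The decisive point, and where Case 2 differs structurally from Case 1, is that the obstacle datum is now the tangential electric trace itself: by the trace theorem $\|\bq\|_{H^{-1/2}_{\mathrm{div}}(\partial D)}\le C\|\hat{\bE}_n\|_{H^1(\mathrm{curl},D)}\le C\sqrt{\delta}$, so the a priori bound of Theorem \ref{th:exitence2} forces $\|\hat{\bE}_n\|_{H^1(\mathrm{curl},B_R\backslash\overline{D})}\le C(\text{data}+\sqrt{\delta})\to0$, contradicting the normalization. This establishes \eqref{inq:esti2_m'}, and substituting \eqref{inq:esti2_m'} into the displayed energy estimate immediately yields \eqref{inq:esti1_m'}.

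The main obstacle I anticipate is the bookkeeping of which trace controls the $\partial D$ datum and of the $\delta$-powers. In Case 1 the factor $\delta$ sits explicitly in the PMC datum $\bnu\times(\delta\nabla\times\hat{\bE}_n)$ and is paired with $\sqrt{\delta}\|\hat{\bE}_n\|_{H^1(\mathrm{curl},D)}\le C$; here, by contrast, the $\sqrt{\delta}$ decay of the datum must be extracted entirely from the smallness of $\bE_\delta$ inside $D$ that is forced by the large $\Im\varepsilon_D$, and one must simultaneously verify that the curl energy on $D$ carries the same $\delta$-power in spite of the large coefficient $\mu_D^{-1}=\delta^{-1}$. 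The two-step real/imaginary argument above is precisely what is designed to secure both, and the remaining estimates are routine adaptations of those already carried out in Lemma \ref{lem:mediu1}.
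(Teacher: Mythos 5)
Your proposal is correct and takes essentially the same route as the paper's proof: the same tested integral identity split into real and imaginary parts (with $\Im\varepsilon_D=\delta^{-1}\tau_0$ forcing $\|\bE_\delta\|^2_{L^2(D)^3}=O(\delta)$ and the real part then giving $\|\nabla\times\bE_\delta\|^2_{L^2(D)^3}=O(\delta)$ despite $\mu_D^{-1}=\delta^{-1}$), the same contradiction scheme for \eqref{inq:esti2_m'} via a normalized sequence, Theorem \ref{th:exitence2} and the trace bound $\|\bnu\times\hat{\bE}_n\|_{H^{-1/2}_{\mathrm{div}}(\partial D)}\leq C\|\hat{\bE}_n\|_{H^1(\mathrm{curl},D)}\leq C\sqrt{\delta}$, and the same back-substitution to obtain \eqref{inq:esti1_m'}. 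The only cosmetic difference is that you discard the $\Im\varepsilon_r$ term on $\Omega\backslash\overline{D}$ by its sign from \eqref{ineq:gamma} rather than carrying it into the right-hand side as the paper does, which changes nothing of substance.
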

\begin{proof} Similarly, we have
{\small\begin{align*}
\int_{B_R\backslash\overline{\Omega}}\mathrm{i}k\bJ\cdot\overline{\bE}^s_\delta\rmd\bx=&\mu_D^{-1}\,\int_{D}|\nabla\times\bE_\delta|^2\mathrm{d}\bx-k^2\varepsilon_D\int_{D}|\bE_\delta|^2\mathrm{d}\bx\,+\int_{\Omega\backslash\overline{D}}\mu^{-1}_r|\nabla\times\bE_\delta|^2\mathrm{d}\bx\nonumber\\
&-k^2\int_{\Omega\backslash\overline{D}}\varepsilon_r\bE_\delta\cdot\overline{\bE}_\delta\mathrm{d}\bx+\int_{B_R\backslash\overline{\Omega}}|\nabla\times\bE^s_\delta|^2\mathrm{d}\bx-\int_{B_R\backslash\overline{\Omega}}k^2|\bE^s_\delta|^2\mathrm{d}\bx
\nonumber\\
&+\int_{\partial \Omega}\gamma_t(\nabla\times\bE^s_\delta )\cdot(\overline{\bE}^i)_{\mathrm{T}} \rmd\sigma+\int_{\partial \Omega}\gamma_t(\nabla\times\bE^i )\cdot(\overline{\bE}^i)_{\mathrm{T}} \rmd\sigma\\
&+\int_{\partial \Omega}\gamma_t(\nabla\times\bE^i )\cdot(\overline{\bE}_\delta^s)_{\mathrm{T}} \rmd\sigma
+\int_{\partial B_R}\gamma_t(\nabla\times\bE^s_\delta )\cdot(\overline{\bE}_\delta^s)_{\mathrm{T}}  \rmd\sigma.
\end{align*}}
Noting that the expressions of $\mu_D$ and $\varepsilon_D$ are given by \eqref{eq:eff2_2}, and then
taking the real and imaginary parts of this integral identity, we can easily obtain
{\small\begin{align}
k\Im\int_{B_R\backslash\overline{\Omega}}\bJ\cdot\overline{\bE}^s_\delta\rmd\bx=&\delta^{-1}\,\|\nabla\times\bE_\delta\|^2_{L^2(D)^3}-k^2\eta_0\|\bE_\delta\|^2_{L^2(D)^3}+\int_{\Omega\backslash\overline{D}}\mu^{-1}_r|\nabla\times\bE_\delta|^2\mathrm{d}\bx\nonumber\\
&-k^2\int_{\Omega\backslash\overline{D}}\Re\varepsilon_r\bE_\delta\cdot\overline{\bE}_\delta\mathrm{d}\bx+\|\nabla\times\bE^s_\delta\|^2_{L^2(B_R\backslash\overline{\Omega})^3}-k^2\|\bE^s_\delta\|^2_{L^2(B_R\backslash\overline{\Omega})^3}
\nonumber\\
&+\Re\int_{\partial \Omega}\gamma_t(\nabla\times\bE^s_\delta )\cdot(\overline{\bE}^i)_{\mathrm{T}} \rmd\sigma+\Re\int_{\partial \Omega}\gamma_t(\nabla\times\bE^i )\cdot(\overline{\bE}^i)_{\mathrm{T}} \rmd\sigma\nonumber\\
&+\Re\int_{\partial \Omega}\gamma_t(\nabla\times\bE^i )\cdot(\overline{\bE}_\delta^s)_{\mathrm{T}}\rmd\sigma+\Re\int_{\partial B_R}\gamma_t(\nabla\times\bE^s_\delta )\cdot(\overline{\bE}_\delta^s)_{\mathrm{T}}  \rmd\sigma \label{eq:inter4-2}
\end{align}}
and
\begin{align}
k\Re\int_{B_R\backslash\overline{\Omega}}\bJ\cdot\overline{\bE}^s_\delta\rmd\bx=&\Im\int_{\partial \Omega}\gamma_t(\nabla\times\bE^s_\delta )\cdot(\overline{\bE}^i)_{\mathrm{T}} \rmd\sigma-\Im\int_{\partial \Omega}\gamma_t(\nabla\times\bE^i )\cdot(\overline{\bE}_\delta^s)_{\mathrm{T}}\rmd\sigma\nonumber\\
&+\Im\int_{\partial \Omega}\gamma_t(\nabla\times\bE^i )\cdot(\overline{\bE}^i)_{\mathrm{T}} \rmd\sigma-k^2\tau_0\delta^{-1}\|\bE_\delta\|^2_{L^2(D)^3}\nonumber\\
&-k^2\int_{\Omega\backslash\overline{D}}\Im\varepsilon_r\,\bE_\delta\cdot\overline{\bE}_\delta\mathrm{d}\bx+\Im\int_{\partial B_R}\gamma_t(\nabla\times\bE^s_\delta )\cdot(\overline{\bE}_\delta^s)_{\mathrm{T}}  \rmd\sigma. \label{eq:inter6 2-2}
\end{align}
Combining \eqref{eq:inter4-2}--\eqref{eq:inter6 2-2} with the continuities of the operators $\gamma_t$ and $\gamma_T$, the following inequalities can be directly derived,
\begin{align*}
\|\nabla\times\bE_\delta\|^2_{L^2(D)^3}\leq & \widetilde{C}_1\,\delta\,\Big(\|\bE_\delta\|^2_{L^2(D)^3}+\|\nabla\times\bE_\delta\|^2_{L^2(\Omega\backslash\overline{D})^3}
 +\|\bE_\delta\|_{L^2(\Omega\backslash\overline{D})^3}\nonumber\\
 &+\|\nabla\times\bE^s_\delta\|^2_{L^2(B_R\backslash\overline{\Omega})^3}+\|\nabla\times\bE^s_\delta\|_{H^1(\mathrm{curl},B_R\backslash\overline{\Omega})} \|\bE^i\|_{H^1(\mathrm{curl},B_R\backslash\overline{\Omega})} \nonumber\\
 &+\|\bE_\delta\|^2_{L^2(B_R\backslash\overline{\Omega})^3} +   \|\nabla\times\bE^i\|_{H^1(\mathrm{curl},B_R\backslash\overline{\Omega})} \|\bE^i\|_{H^1(\mathrm{curl},B_R\backslash\overline{\Omega})}\nonumber\\
 &+\|\nabla\times\bE^i\|_{H^1(\mathrm{curl},B_R\backslash\overline{\Omega})} \|\bE^s_\delta\|_{H^1(\mathrm{curl},B_R\backslash\overline{\Omega})}+\|\bJ\|_{L^2(B_R\backslash\overline{\Omega})^3}\|\bE^s_\delta\|_{L^2(B_R\backslash\overline{\Omega})^3}\nonumber\\
 &+\|\nabla\times\bE^s_\delta\|_{H^1(\mathrm{curl},B_R\backslash\overline{\Omega})} \|\bE^s_\delta\|_{H^1(\mathrm{curl},B_R\backslash\overline{\Omega})}
 \Big)\nonumber\\
\leq & \widetilde{C}_2\,\delta\,\Big(\|\bE_\delta\|^2_{L^2(D)^3}+\|\bE_\delta\|^2_{H^1(\mathrm{curl},B_R\backslash\overline{D})}+\|\nabla\times\bE^i\|^2_{H^1(\mathrm{curl},B_R\backslash\overline{\Omega})}\nonumber\\
&\quad+\|\bE^i\|^2_{H^1(\mathrm{curl},B_R\backslash\overline{\Omega})}+\|\bJ\|^2_{L^2(B_R\backslash\overline{D})^3}\Big)
\end{align*}
and
 \begin{align*}
\|\bE_\delta\|^2_{L^2(D)^3}\leq &\widetilde{C}_3\,\delta\,\Big( \|\bE_\delta\|^2_{L^2(\Omega\backslash\overline{D})^3}  +\|\nabla\times\bE^i\|_{H^1(\mathrm{curl},B_R\backslash\overline{\Omega})} \|\bE^i\|_{H^1(\mathrm{curl},B_R\backslash\overline{\Omega})}\nonumber\\
&+\|\nabla\times\bE^s_\delta\|_{H^1(\mathrm{curl},B_R\backslash\overline{\Omega})} \|\bE^i\|_{H^1(\mathrm{curl},B_R\backslash\overline{\Omega})}++\|\bJ\|_{L^2(B_R\backslash\overline{\Omega})^3}\|\bE^s_\delta\|_{L^2(B_R\backslash\overline{\Omega})^3}\nonumber\\
&+\|\nabla\times\bE^i\|_{H^1(\mathrm{curl},B_R\backslash\overline{\Omega})} \|\bE^s_\delta\|_{H^1(\mathrm{curl},B_R\backslash\overline{\Omega})}\nonumber\\
&+\|\nabla\times\bE^s_\delta\|_{H^1(\mathrm{curl},B_R\backslash\overline{\Omega})} \|\bE^s_\delta\|_{H^1(\mathrm{curl},B_R\backslash\overline{\Omega})} \big)\nonumber\\[5pt]
 \leq &\widetilde{C}_4\,\delta\,\big( \|\bE_\delta\|_{H^1(\mathrm{curl},B_R\backslash\overline{D})} +\|\bE^i\|_{H^1(\mathrm{curl},B_R\backslash\overline{\Omega})}+ \|\nabla\times\bE^i\|_{H^1(\mathrm{curl},B_R\backslash\overline{\Omega})} \nonumber\\
 &\quad+\|\bJ\|^2_{L^2(B_R\backslash\overline{\Omega})^3}\Big),
\end{align*}
where $\widetilde{C}_1$, $\widetilde{C}_2$, $\widetilde{C}_3$, and $\widetilde{C}_4$ are positive constants depending on $q,\,\eta_0,\tau_0, \,B_R,\, k,\,\Omega,\,\mu_r$ and $D$. We note that $(\delta^2+\delta)\leq 2\delta$ for enough small $\delta$, so
\begin{align}\label{eq:inter8 2-2}
\|\bE_\delta\|^2_{H^1(\mathrm{curl},D)} \leq &\widetilde{C}_5 \,\delta\,\Big( \|\bE_\delta\|^2_{H^1(\mathrm{curl},B_R\backslash\overline{D})}  +\|\bE^i\|^2_{H^1(\mathrm{curl},B_R\backslash\overline{\Omega})} +\|\nabla\times\bE^i\|^2_{H^1(\mathrm{curl},B_R\backslash\overline{\Omega})}\nonumber\\
&\quad+ \|\bJ\|^2_{L^2(B_R\backslash\overline{\Omega})^3}\Big),
\end{align}
where $\widetilde{C}_5$ does not reply on $\delta$. In what follows, the estimate \eqref{inq:esti2_m'} can be proved by contradiction. Assume that for any nonnegative integer $n$, there exists a set of data $\{(\bE_n, \bE^i_n,\bJ_n)\}$, where $\bE_n$ is the unique solution of \eqref{ME:tran1_E} with $\bJ_n$ and $\bE^i_n$ as inputs and satisfies the following conditions
\begin{align*}
\left\{ \begin{array}{ll}
\|\bJ_n\|_{L^2(B_R\backslash\overline{\Omega})^3}+\|\bE^i_n\|_{H^1(\mathrm{curl},B_R\backslash\overline{\Omega})^3}+\|\nabla\times\bE^i_n\|_{H^1(\mathrm{curl},B_R\backslash\overline{\Omega})^3}=1	,\quad \\[5pt]
\|\bE_n\|_{H^1(\mathrm{curl},B_R\backslash\overline{D})}\rightarrow\infty \quad \mbox{as}\quad \delta\to 0.
\end{array}
 \right.
\end{align*}
Another set of data $\{(\hat{\bE}_n, \hat{\bE}^i_n,\hat{\bJ}_n)\}$ is constructed as follows
\begin{align*}
\left\{ \begin{array}{ll}
\hat{\bJ}_n=\dfrac{\bJ_n}{\|\bE_n\|_{H^1(\mathrm{curl},B_R\backslash\overline{D})}},\quad \hat{\bE}_n=\dfrac{\bE_n}{\|\bE_n\|_{H^1(\mathrm{curl},B_R\backslash\overline{D})}},\\[15pt]
\hat{\bE}^i_n=\dfrac{\bE^i_n}{\|\bE_n\|_{H^1(\mathrm{curl},B_R\backslash\overline{D})}},\quad \hat{\bE}^s_n=\dfrac{\bE_n-\bE^i_n}{\|\bE_n\|_{H^1(\mathrm{curl},B_R\backslash\overline{D})}}.
\end{array}
 \right.
\end{align*}
It is easy to verify that $\hat{\bE}_n$ solves the system \eqref{ME:tran1_E} associated  with the input data $\hat{\bE}^i_n$ and $\hat{\bJ}_n$. As $\delta$ goes to $0$, we have
\begin{align*}
\|\hat{\bE}_n\|_{H^1(\mathrm{curl},B_R\backslash\overline{D})}=1 \,\,\mbox{and}\quad
 \|\hat{\bJ}_n\|_{L^2(B_R\backslash\overline{D})^3},\,\|\nabla\times\hat{\bE}^i_n\|_{H^1(\mathrm{curl},B_R\backslash\overline{D})},\, \|\hat{\bE}^i_n\|_{H^1(\mathrm{curl},B_R\backslash\overline{D})}\to 0 .
\end{align*}
Similarly, we obtain the following estimate,
\begin{align*}
\|\hat{\bE}_n\|_{H^1(\mathrm{curl},D)} \leq &\widetilde{C}_6 \sqrt{\delta}\,\big( \|\hat{\bE}_n\|^2_{H^1(\mathrm{curl},B_R\backslash\overline{D})}  +\|\hat{\bE}^i_n\|^2_{H^1(\mathrm{curl},B_R\backslash\overline{\Omega})} +\|\nabla\times\hat{\bE}^i_n\|^2_{H^1(\mathrm{curl},B_R\backslash\overline{\Omega})}\nonumber \\
&\qquad\quad+\|\hat{\bJ}_n\|^2_{L^2(B_R\backslash\overline{\Omega})^3}\big)^{1/2}
\end{align*}
where $\widetilde{C}_6$ is a positive constant not relying on $\delta$.

Let $\hat{\bH}_n=\frac{\hat{\bE}_n}{\mathrm{i}k\mu_r}$ and $\hat{\bH}^s_n=\frac{\hat{\bE}^s_n}{\mathrm{i}k}$. Indeed, $(\hat{\bE}_n\big|_{\Omega\backslash\overline{D}},\,\hat{\bH}_n\big|_{\Omega\backslash\overline{D}},\,\hat{\bE}^s_n\big|_{\mathbb{R}^3\backslash\overline{\Omega}},\,\hat{\bH}^s_n\big|_{\mathbb{R}^3\backslash\overline{\Omega}})$ is the unique solution of \eqref{MD:tran2} with these boundary conditions $\bp=\bnu\times \hat{\bE}_n\Big|^+_{\partial{D}}$, $\bff=\bnu\times\hat{\bE}^i_n\big|^+_{\partial \Omega}$ and $\mathrm{i}k \bh=\bnu\times(\nabla\times\hat{\bE}^i_n)\big|^+_{\partial \Omega}$. Combining with Theorem \ref{th:exitence2}, one implies
 \begin{align*}
\|\hat{\bE}_n\|_{H^1(\mathrm{curl},B_R\backslash\overline{D})}\leq & \widetilde{C}_7\big(\|\bnu\times\hat{\bE}^i_n\big|^+_{\partial \Omega}\|_{\mathrm{div}(\partial \Omega)}+\big\|\frac{1}{\mathrm{i}k}\bnu\times(\nabla\times\hat{\bE}^i_n)\big|^+_{\partial \Omega}\big\|_{H^{-1/2}_{\mathrm{div}}(\partial \Omega)}\\
&\qquad+\|\bnu\times(\mu_r^{-1}\nabla\times \hat{\bE}_n)\big|^+_{\partial{D}}\|_{H^{-1/2}_{\mathrm{div}}(\partial D)}+\|\hat{\bJ}_n\|_{L^2(B_R\backslash\overline{\Omega})^3}
\big)\\
\leq& \widetilde{C}_8\big(\|\hat{\bE}_n\|_{H^1(\mathrm{curl}, D)}+\|\hat{\bE}^i_n\|_{H^1(\mathrm{curl},B_R\backslash\overline{\Omega})}+\|\nabla\times\hat{\bE}^i_n\|_{H^1(\mathrm{curl},B_R\backslash\overline{\Omega})}\nonumber\\
&\quad+\|\hat{\bJ}_n\|_{L^2(B_R\backslash\overline{\Omega})^3}
\big)
 \end{align*}
for some positive constants $\widetilde{C}_7$ and $\widetilde{C}_8$.
As $\delta$ goes to $0$, we see that $\|\hat{\bE}_n\|_{H^1(\mathrm{curl},B_R\backslash\overline{D})}\to 0$. This contradicts with  the fact that $\|\hat{\bE}_n\|_{H^1(\mathrm{curl},B_R\backslash\overline{D})}=1$. At this point, we have proven \eqref{inq:esti2_m'}. By combining \eqref{inq:esti2_m'} and \eqref{eq:inter8 2-2}, it is straightforward to prove \eqref{inq:esti1_m'}. We complete the proof.
\end{proof}

\section{Proofs of Theorem \ref{thm:main1} and Theorem \ref{thm:main2}}\label{sect:proof}
Before delving into the proof details of Theorem \ref{thm:main1}, we establish preliminary estimates concerning the electric field in the medium scattering problem (\ref{ME:tran1}) for both Case 1 and Case 2. However, for Case 1, we require the following proposition, which directly follows from Lemma \ref{lem:mediu1}.

\begin{prop}\label{prop:der_case1}
Assuming that $\bE_\delta\in H^1_{\rm loc}(\mathrm{curl}, \mathbb{R}^3)$ is the solution of \eqref{ME:tran1_E} for Case 1, then there is $\delta_0>0$ satisfying the following estimate for $\delta<\delta_0$:
\begin{align*}
\left\|\bnu\times(\mu_r^{-1}\nabla\times \bE_\delta)\big|^+_{\partial{D}}\right\|_{H^{-1/2}_{\mathrm{div}}(\partial D)}\leq & C\sqrt{\delta}\big(\|\bE^i\|_{H^1(\mathrm{curl}, \mathbb{R}^3\backslash\overline{\Omega})}+\|\nabla\times\bE^i\|_{H^1(\mathrm{curl}, \mathbb{R}^3\backslash\overline{\Omega})}\nonumber\\
&\quad+\|\bJ\|_{L^2(B_R\backslash\overline{\Omega})^3}\big).
\end{align*}
\end{prop}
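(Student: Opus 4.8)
The plan is to use the transmission condition on $\partial D$ to trade the target trace for an interior quantity carrying an explicit factor $\mu_D^{-1}=\delta$, and then to feed the interior estimate \eqref{inq:esti1_m} of Lemma \ref{lem:mediu1} into a trace bound. From the transmission condition across $\partial D$ in \eqref{ME:tran1_E} together with the choice $\mu_D=\delta^{-1}$ in \eqref{eq:eff2},
\begin{equation*}
\bnu\times(\mu_r^{-1}\nabla\times\bE_\delta)\big|^+_{\partial D}=\bnu\times(\mu_D^{-1}\nabla\times\bE_\delta)\big|^-_{\partial D}=\delta\,\bnu\times(\nabla\times\bE_\delta)\big|^-_{\partial D},
\end{equation*}
so the desired quantity is exactly $\gamma_t$ applied, from inside $D$, to the vector field $\bv_\delta:=\mu_D^{-1}\nabla\times\bE_\delta=\delta\,\nabla\times\bE_\delta$. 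Note $\bv_\delta\in H^1(\mathrm{curl},D)$: indeed $\bv_\delta\in L^2(D)^3$, and the interior equation in \eqref{ME:tran1_E} gives $\nabla\times\bv_\delta=\nabla\times(\mu_D^{-1}\nabla\times\bE_\delta)=k^2\varepsilon_D\bE_\delta\in L^2(D)^3$.

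Next I would invoke the continuity of the tangential trace $\gamma_t\colon H^1(\mathrm{curl},D)\to H^{-1/2}_{\mathrm{div}}(\partial D)$ from Theorem \ref{th_auxility1} to obtain
\begin{equation*}
\big\|\bnu\times(\mu_r^{-1}\nabla\times\bE_\delta)\big|^+_{\partial D}\big\|_{H^{-1/2}_{\mathrm{div}}(\partial D)}\le C\|\bv_\delta\|_{H^1(\mathrm{curl},D)}=C\big(\delta^2\|\nabla\times\bE_\delta\|^2_{L^2(D)^3}+k^4|\varepsilon_D|^2\|\bE_\delta\|^2_{L^2(D)^3}\big)^{1/2},
\end{equation*}
where the interior equation $\nabla\times\bv_\delta=k^2\varepsilon_D\bE_\delta$ has been used to rewrite $\|\nabla\times\bv_\delta\|_{L^2(D)^3}$. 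The merit of this rewriting is that it replaces the a priori singular second-order quantity $\nabla\times\nabla\times\bE_\delta=O(\delta^{-1})$ by the zeroth-order term $k^2\varepsilon_D\bE_\delta$, which is the only way a $\sqrt\delta$-type bound can survive.

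It remains to control the two terms. The first is immediate: by \eqref{inq:esti1_m},
\begin{equation*}
\delta\|\nabla\times\bE_\delta\|_{L^2(D)^3}\le\delta\|\bE_\delta\|_{H^1(\mathrm{curl},D)}=\sqrt\delta\,\big(\sqrt\delta\,\|\bE_\delta\|_{H^1(\mathrm{curl},D)}\big)\le\sqrt\delta\,C_1\big(\|\nabla\times\bE^i\|_{H^1(\mathrm{curl},B_R\backslash\overline{\Omega})}+\|\bE^i\|_{H^1(\mathrm{curl},B_R\backslash\overline{\Omega})}+\|\bJ\|_{L^2(B_R\backslash\overline{D})^3}\big),
\end{equation*}
which is precisely of the asserted order $\sqrt\delta$. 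The second term, $k^2|\varepsilon_D|\,\|\bE_\delta\|_{L^2(D)^3}$, is the main obstacle: the crude consequence $\|\bE_\delta\|_{L^2(D)^3}\le\|\bE_\delta\|_{H^1(\mathrm{curl},D)}\le C_1\delta^{-1/2}(\cdots)$ of \eqref{inq:esti1_m} is far too weak, and one must instead show that the interior $L^2$-mass of $\bE_\delta$ is itself of order $\sqrt\delta$. I expect this to be the genuine difficulty, and to require exploiting the absorption $\Im\varepsilon_D=\tau_0>0$ in the high-contrast regime $\mu_D=\delta^{-1}$; concretely, one should extract from the imaginary part of the interior energy balance underlying Lemma \ref{lem:mediu1} the extra half power of $\delta$ on $\|\bE_\delta\|_{L^2(D)^3}$, rather than only the gradient-type bound recorded in \eqref{inq:esti1_m}.

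Combining the two bounds would then yield the stated estimate for $\delta<\delta_0$. The PEC case is handled in exactly the same way, with Lemma \ref{lem:mediu2} and the parameter choice \eqref{eq:eff2_2} replacing their Case 1 counterparts; there the roles of $\mu_D$ and $\Im\varepsilon_D$ are interchanged, but the structure of the argument — transmission reduction, trace bound, and a sharp interior $L^2(D)$-estimate of order $\sqrt\delta$ — is identical.
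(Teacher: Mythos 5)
Your reduction---transmission condition plus $\mu_D^{-1}=\delta$, the substitution $\bv_\delta=\delta\,\nabla\times\bE_\delta$, the interior equation to rewrite $\nabla\times\bv_\delta=k^2\varepsilon_D\bE_\delta$, the continuity of $\gamma_t\colon H^1(\mathrm{curl},D)\to H^{-1/2}_{\mathrm{div}}(\partial D)$, and \eqref{inq:esti1_m} for the term $\delta\|\nabla\times\bE_\delta\|_{L^2(D)^3}$---is in substance the route the paper intends: the paper offers no separate proof, asserting only that the proposition ``directly follows from Lemma \ref{lem:mediu1}'', and the corresponding step inside that lemma's proof is the bound $\big\|\bnu\times(\delta\nabla\times\hat{\bE}_n)\big|^-_{\partial D}\big\|_{H^{-1/2}_{\mathrm{div}}(\partial D)}\le C\|\delta\hat{\bE}_n\|_{H^1(\mathrm{curl},D)}$, which, measured against the standard trace theorem, silently drops the curl contribution $\|\nabla\times(\delta\nabla\times\bE_\delta)\|_{L^2(D)^3}=k^2|\varepsilon_D|\,\|\bE_\delta\|_{L^2(D)^3}$ that you correctly isolate. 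So your analysis locates the crux more carefully than the paper's one-line justification. But your proposal is incomplete at exactly the point you flag, and the repair you sketch does not work: taking the imaginary part of the interior energy balance, i.e.\ \eqref{eq:inter6 2}, the absorption term $k^2\tau_0\|\bE_\delta\|^2_{L^2(D)^3}$ is balanced by boundary integrals over $\partial\Omega$ and $\partial B_R$ involving $\bE^i$ and $\bE^s_\delta$ and by the source term $k\Re\int_{B_R\backslash\overline{\Omega}}\bJ\cdot\overline{\bE}^s_\delta\,\rmd\bx$, none of which carries a factor of $\delta$; as $\delta\to0$ these tend to the nonzero quantities of the limiting PMC problem. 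This is precisely why the intermediate estimate actually extracted in the proof of Lemma \ref{lem:mediu1} reads $\|\bE_\delta\|^2_{L^2(D)^3}\le C_4\big(\|\bE_\delta\|^2_{L^2(\Omega\backslash\overline{D})^3}+\mbox{data}^2\big)$---an $O(1)$ bound, not $O(\delta)$.

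Consequently, with the inputs you invoke, your chain delivers only $\big\|\bnu\times(\mu_r^{-1}\nabla\times\bE_\delta)\big|^+_{\partial D}\big\|_{H^{-1/2}_{\mathrm{div}}(\partial D)}=O(1)\cdot(\mbox{data})$, one half power of $\delta$ short of the claim; the missing ingredient, $\|\bE_\delta\|_{L^2(D)^3}\lesssim\sqrt{\delta}\,(\mbox{data})$, is asserted as an expectation but established nowhere, and it cannot be obtained by comparing $\bE_\delta$ with the limiting PMC solution either, since Proposition \ref{diff} (the natural vehicle for such a comparison) is itself proved by citing the present proposition, so that route is circular. One further correction: your closing claim that the PEC case is handled ``in exactly the same way'' is off the mark, and instructively so. In Case 2 the trace needed in Proposition \ref{diff} is that of $\bE_\delta$ itself, so $\|\bnu\times\bE_\delta\|_{H^{-1/2}_{\mathrm{div}}(\partial D)}\le C\|\bE_\delta\|_{H^1(\mathrm{curl},D)}\le C\sqrt{\delta}\,(\mbox{data})$ follows at once from \eqref{inq:esti1_m'}, with no analogue of the troublesome interior $L^2$ term; the genuine difficulty you identified is specific to Case 1, where the trace is taken of $\nabla\times\bE_\delta$ and the equation converts its curl into the zeroth-order quantity $k^2\varepsilon_D\bE_\delta$ carrying no power of $\delta$.
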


Subsequently, we demonstrate that the solution of \eqref{eq:sca1} can be approximated by the solution of \eqref{ME:tran1} with respect to the parameter $\delta$.

\begin{prop}\label{diff}
Let $(\bE,\bH)\in H^1_{\rm loc}(\mathrm{curl},\mathbb{R}^3\backslash\overline{D})\times H^1_{\rm loc}(\mathrm{curl},\mathbb{R}^3\backslash\overline{D})$ be the solution of \eqref{eq:sca1} and $(\bE_\delta,\bH_\delta)\in H^1_{\rm loc}(\mathrm{curl},\mathbb{R}^3)\times H^1_{\rm loc}(\mathrm{curl},\mathbb{R}^3)$ be the solution of \eqref{ME:tran1} with the incident wave $\bE^i$ and source $\bJ$. Then,  for a sufficiently large $R$ and sufficiently small $\delta$, we have
\begin{align*}
\|\bE-\bE_\delta\|_{H^1(\mathrm{curl},B_R\backslash\overline{D})}+\|\bH-\bH_\delta\|_{H^1(\mathrm{curl},B_R\backslash\overline{D})}\leq & C\sqrt{\delta}\big(\|\bE^i\|_{H^1(\mathrm{curl}, B_R\backslash\overline{\Omega})}+\|\bJ\|_{L^2(B_R\backslash\overline{\Omega})^3}\nonumber\\
&\quad+\|\nabla\times\bE^i\|_{H^1(\mathrm{curl}, B_R\backslash\overline{\Omega})}\big).
\end{align*}
\end{prop}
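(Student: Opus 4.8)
The plan is to show that the difference of the obstacle field and the medium field solves an auxiliary problem of exactly the type \eqref{MD:tran2}, one whose only nontrivial datum is carried on $\partial D$, and then to close the estimate by combining the stability bound \eqref{ieq:est} of Theorem \ref{th:exitence1} with Proposition \ref{prop:der_case1}. Concretely, I would set $\bW:=\bE-\bE_\delta$ and $\bH_W:=\bH-\bH_\delta$ in $\Omega\backslash\overline{D}$, and $\bW^s:=\bE^s-\bE_\delta^s$, $\bH_W^s:=\bH^s-\bH_\delta^s$ in $\mathbb{R}^3\backslash\overline{\Omega}$. Since $\bE$ and the restriction of $\bE_\delta$ to $\Omega\backslash\overline{D}$ obey the identical equation $\nabla\times(\mu_r^{-1}\nabla\times\,\cdot\,)-k^2\varepsilon_r\,\cdot=\mathbf{0}$, the difference $\bW$ satisfies that same homogeneous equation; likewise, because both $\bE^s$ and $\bE_\delta^s$ solve $\nabla\times\nabla\times\,\cdot\,-k^2\,\cdot\,=\bJ$ in the exterior with the same source, $\bW^s$ solves the source-free equation and inherits the Silver--M\"uller radiation condition.

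Next I would check that the transmission data on $\partial\Omega$ cancel. Subtracting the two pairs of transmission conditions across $\partial\Omega$, both of which carry the same incident contributions $\bnu\times\bE^i$ and $\bnu\times(\nabla\times\bE^i)$, yields $\bnu\times\bW=\bnu\times\bW^s$ and $\bnu\times\bH_W=\bnu\times\bH_W^s$ on $\partial\Omega$, so the data $\bff$ and $\bh$ in \eqref{MD:tran2} vanish, and the exterior source term cancels as well. Hence $(\bW,\bH_W,\bW^s,\bH_W^s)$ solves \eqref{MD:tran2} with $\bff=\bh=\mathbf{0}$, $\bJ=\mathbf{0}$, and a single nontrivial boundary datum $\bq$ on $\partial D$.

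The heart of the argument is identifying and bounding $\bq$. On $\partial D$ the obstacle field obeys the PMC condition $\bnu\times\bH=\mathbf{0}$, equivalently $\bnu\times(\mu_r^{-1}\nabla\times\bE)=\mathbf{0}$, so taking the trace from the exterior side of $\partial D$ gives
\[
\bq=\bnu\times\bH_W\big|^+_{\partial D}=-\,\bnu\times\bH_\delta\big|^+_{\partial D}=-\frac{1}{\mathrm{i}k}\,\bnu\times(\mu_r^{-1}\nabla\times\bE_\delta)\big|^+_{\partial D}.
\]
By Proposition \ref{prop:der_case1} this is controlled as
\[
\|\bq\|_{H^{-1/2}_{\mathrm{div}}(\partial D)}\le C\sqrt{\delta}\big(\|\bE^i\|_{H^1(\mathrm{curl},B_R\backslash\overline{\Omega})}+\|\nabla\times\bE^i\|_{H^1(\mathrm{curl},B_R\backslash\overline{\Omega})}+\|\bJ\|_{L^2(B_R\backslash\overline{\Omega})^3}\big).
\]

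Finally I would apply the well-posedness estimate \eqref{ieq:est} to the difference problem: with $\bff=\bh=\bJ=\mathbf{0}$ it reduces to
\[
\|\bW\|_{H^1(\mathrm{curl},\Omega\backslash\overline{D})}+\|\bH_W\|_{H^1(\mathrm{curl},\Omega\backslash\overline{D})}+\|\bW^s\|_{H^1(\mathrm{curl},B_R\backslash\overline{\Omega})}+\|\bH_W^s\|_{H^1(\mathrm{curl},B_R\backslash\overline{\Omega})}\le C\|\bq\|_{H^{-1/2}_{\mathrm{div}}(\partial D)},
\]
and combining with the bound on $\bq$ gives the claim, after noting that in $B_R\backslash\overline{\Omega}$ the total-field difference $\bE-\bE_\delta$ coincides with $\bW^s$. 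The main obstacle is the bookkeeping in the third step: one must take the trace from the exterior side of $\partial D$, use the PMC condition for $\bE$ against the transmission condition for $\bE_\delta$, and exploit that the factor $\mu_D^{-1}=\delta$ combines with the $\delta^{-1/2}$-type growth of $\|\bE_\delta\|_{H^1(\mathrm{curl},D)}$ from Lemma \ref{lem:mediu1} to produce precisely the $\sqrt{\delta}$ decay packaged in Proposition \ref{prop:der_case1}; the analogous statement in the PEC case would proceed identically, transferring the small datum instead to the tangential trace of $\bE$ on $\partial D$.
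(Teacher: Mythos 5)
Your proposal is correct and follows essentially the same route as the paper's own proof: both recast the difference field as the unique solution of \eqref{MD:tran2} (resp.\ \eqref{MD:tran2-2} in the PEC case) with $\bff=\bh=\bJ=\mathbf{0}$ and sole nontrivial datum $\bq$ on $\partial D$, bound $\|\bq\|_{H^{-1/2}_{\mathrm{div}}(\partial D)}$ by $C\sqrt{\delta}$ via Proposition \ref{prop:der_case1} (resp.\ the trace theorem combined with Lemma \ref{lem:mediu2}), and close with the stability estimates of Theorem \ref{th:exitence1} (resp.\ Theorem \ref{th:exitence2}). The only slip is cosmetic: in the PEC case the small datum is the tangential trace of $\bE_\delta$ rather than of $\bE$ (which vanishes on $\partial D$ by the boundary condition), exactly as in Part II of the paper's proof.
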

\begin{proof} Certainly, we will divide this proof into two parts corresponding to different boundary conditions for $D$.

\medskip
\noindent {\bf Part I.}
Consider the boundary condition on $\partial D$ of \eqref{eq:sca1}(i.e., $\mathfrak{B}(\bE)=\bnu\times\big(\mu^{-1}_r\nabla\times \bE\big)=\bf{0}$) and the material parameters in the region $D$ associated with the system \eqref{ME:tran1} described as in Case 1 of Theorem \ref{thm:main1}.  Denote $\widetilde{\bE}=\bE_\delta-\bE$, $\widetilde{\bH}=\bH_\delta-\bH$, $\widetilde{\bE}^s=\bE_\delta^s-\bE^s$ and $\widetilde{\bH}^s=\bH_\delta^s-\bH^s$. It is easy to verify that $(\widetilde{\bE},\widetilde{\bH},\widetilde{\bE}^s,\widetilde{\bH}^s)\in H^1(\mathrm{curl}, \Omega\backslash\overline{D})\times H^1(\mathrm{curl}, \Omega\backslash\overline{D})\times  H^1_{\mathrm{loc}}(\mathrm{curl}, \mathbb{R}^3\backslash\overline{\Omega})\times H^1_{\mathrm{loc}}(\mathrm{curl}, \mathbb{R}^3\backslash\overline{\Omega})$ is the unique solution of \eqref{MD:tran2} with the boundary conditions: $\bff=\bh=\bJ=0$ and $\bq=\bnu\times(\mu_r^{-1}\nabla\times\widetilde{\bE})$.
According to  Proposition \ref{prop:der_case1}, Theorem \ref{th:exitence1} and  Lemma \ref{lem:mediu1}, we get
\begin{align*}
&\|\widetilde\bE\|_{H^1(\mathrm{curl},\Omega\backslash\overline{D})}+\|\widetilde\bH\|_{H^1(\mathrm{curl},\Omega\backslash\overline{D})}+\|\widetilde\bE^s\|_{H^1(\mathrm{curl}, B_R\backslash\overline{\Omega})}+\|\widetilde\bH^s\|_{H^1(\mathrm{curl}, B_R\backslash\overline{\Omega})}\nonumber\\
&\qquad\quad\leq C\|\bq\|_{H^{-1/2}_{\mathrm{div}}(\partial D)}= C \|\bnu\times(\mu_r^{-1}\nabla\times\bE_\delta)\big|^-_{\partial D}\|_{H^{-1/2}_{\mathrm{div}}(\partial D)}\\
&\qquad\quad\leq C\sqrt{\delta}\big(\|\bE^i\|_{H^1(\mathrm{curl}, \mathbb{R}^3\backslash\overline{\Omega})}+\|\nabla\times\bE^i\|_{H^1(\mathrm{curl}, \mathbb{R}^3\backslash\overline{\Omega})}+\|\bJ\|_{L^2(B_R\backslash\overline{\Omega})^3}\big)
\end{align*}
for some positive constants $C$  that do not depend on $\delta$.

\medskip
\noindent {\bf Part II.}
Consider the boundary condition on $\partial D$ of \eqref{eq:sca1}(i.e., $\mathfrak{B}(\bE)=\bnu\times \bE=\bf{0}$) and the material parameters in the region $D$ associated with the system \eqref{ME:tran1} described as in Case 2 of Theorem \ref{thm:main1}. Denote $\widetilde{\bE}=\bE_\delta-\bE$, $\widetilde{\bH}=\bH_\delta-\bH$, $\widetilde{\bE}^s=\bE_\delta^s-\bE^s$ and $\widetilde{\bH}^s=\bH_\delta^s-\bH^s$. We can easily obtain that $(\widetilde{\bE},\widetilde{\bH},\widetilde{\bE}^s,\widetilde{\bH}^s)\in H^1(\mathrm{curl}, \Omega\backslash\overline{D})\times H^1(\mathrm{curl}, \Omega\backslash\overline{D})\times  H^1_{\mathrm{loc}}(\mathrm{curl}, \mathbb{R}^3\backslash\overline{\Omega})\times H^1_{\mathrm{loc}}(\mathrm{curl}, \mathbb{R}^3\backslash\overline{\Omega})$ is the unique solution of \eqref{MD:tran2-2}, where $\bff=\bh=\bJ=0$ and $\bq=\bnu\times\widetilde{\bE}\big|_{\partial D}=\bnu\times\bE_\delta\big|_{\partial D}$. By using Lemma \ref{lem:mediu2}, trace Theory and Theorem \ref{th:exitence2}, it is straightforward to imply that
\begin{align*}
&\|\widetilde\bE\|_{H^1(\mathrm{curl},\Omega\backslash\overline{D})}+\|\widetilde\bH\|_{H^1(\mathrm{curl},\Omega\backslash\overline{D})}+\|\widetilde\bE^s\|_{H^1(\mathrm{curl}, B_R\backslash\overline{\Omega})}+\|\widetilde\bH^s\|_{H^1(\mathrm{curl}, B_R\backslash\overline{\Omega})}\nonumber\\
&\qquad\quad\leq C'\|\bq\|_{H^{-1/2}_{\mathrm{div}}(\partial D)}= C' \|\bnu\times\bE_\delta\big|^-_{\partial D}\|_{H^{-1/2}_{\mathrm{div}}(\partial D)}\leq C''\|\bE_\delta\|_{H^1(\mathrm{curl}, D)}\\
&\qquad\quad\leq C'''\sqrt{\delta}(\|\bE^i\|_{H^1(\mathrm{curl}, \mathbb{R}^3\backslash\overline{\Omega})}+\|\nabla\times\bE^i\|_{H^1(\mathrm{curl}, \mathbb{R}^3\backslash\overline{\Omega})}+\|\bJ\|_{L^2(B_R\backslash\overline{\Omega})^3}),
\end{align*}
where $C',C''$ and $C'''$ are positive constants without depending on $\delta$. So we finish the proof.
\end{proof}

After completing the necessary preparations outlined above, we will now proceed to prove Theorem \ref{thm:main1}.

\begin{proof}[Proof of Theorem \ref{thm:main1}]
 Let $\widetilde{\bE}=\bE_\delta-\bE$ and $\widetilde{\bE}^s=\bE_\delta^s-\bE^s$. Note that $\widetilde{\bE}^s=\widetilde{\bE}$. We adopt these explicit expressions for the scattering amplitude of $\bE_\delta^s$ and $\bE^s$ (cf.\cite{{Colton2013}}):
\begin{eqnarray*}
\bE_\delta^{\infty}(\hat{\bx})=\frac{\mathrm{i}k}{4\pi}\int_{\partial \Omega}\Big\{\bnu(\by)\times\bE_\delta^s(\by)+\Big(\bnu(\by)\times\frac{\nabla\times\bE^s_\delta(\by)}{\mathrm{i}k}\Big)\times\hat{\bx}\Big\}e^{-\mathrm{i}k\hat{\bx}\cdot\by}\rmd \sigma(y),\quad\hat{\bx}\in \mathbb{S}^{2}, \\
\bE^{\infty}(\hat{\bx})=\frac{\mathrm{i}k}{4\pi}\int_{\partial \Omega}\Big\{\bnu(\by)\times\bE^s(\by)+\Big(\bnu(\by)\times\frac{\nabla\times\bE^s(\by)}{\mathrm{i}k}\Big)\times\hat{\bx}\Big\}e^{-\mathrm{i}k\hat{\bx}\cdot\by}\rmd \sigma(y),\quad\hat{\bx}\in \mathbb{S}^{2}.
\end{eqnarray*}
Then $\widetilde{\bE}^{\infty}(\hat{\bx})=\frac{\mathrm{i}k}{4\pi}\int_{\partial \Omega}\big\{\bnu(\by)\times\widetilde{\bE}^s(\by)+\big(\bnu(\by)\times\frac{\nabla\times\widetilde{\bE}^s(\by)}{\mathrm{i}k}\big)\times\hat{\bx}\big\}e^{-\mathrm{i}k\hat{\bx}\cdot\by}\rmd \sigma(y).$ Thus, combined with Proposition \ref{diff}, we can derive the following estimate
\begin{align*}
\big\|\widetilde{\bE}^{\infty}(\hat{\bx})\big\|_{L(\mathbb{S}^2)^3}\leq&
\Big\|\frac{\mathrm{i}k}{4\pi}\hat{\bx}\times\int_{\partial\Omega}\Big(\bnu(\by)\times\frac{\nabla\times\widetilde{\bE}^s(\by)}{\mathrm{i}k}\Big)\times\hat{\bx}\,\,e^{-\mathrm{i}k\hat{\bx}\cdot\by}\rmd \sigma(y)\Big\|_{L(\mathbb{S}^2)^3}\\
&\quad+\Big\| \frac{\mathrm{i}k}{4\pi}\hat{\bx}\times\int_{\partial\Omega}\bnu(\by)\times\bE^s(\by)\,\,e^{-\mathrm{i}k\hat{\bx}\cdot\by} \mathrm{d}\sigma(\by) \Big\|_{L(\mathbb{S}^2)^3}\\
\leq& M_1\|\widetilde{\bE}^s\|_{L^2(B_R\backslash\overline{\Omega})^3}+M_2\|\nabla\times\widetilde{\bE}^s\|_{L^2(B_R\backslash\overline{\Omega})^3}\\
\leq& M_3\|\widetilde{\bE}^s\|_{H^1(\mathrm{curl},B_R\backslash\overline{\Omega})}\leq M_4 \sqrt{\delta}\Big(\|\bE^i\|_{H^1(\mathrm{curl}, \mathbb{R}^3\backslash\overline{\Omega})}\nonumber\\
&\quad+\|\nabla\times\bE^i\|_{H^1(\mathrm{curl}, \mathbb{R}^3\backslash\overline{\Omega})}+\|\bJ\|_{L^2(B_R\backslash\overline{\Omega})^3}\Big),
\end{align*}
where $M_1,\,M_1,\, M_3$ and $M_4$ are positive constants depending only on $\omega$, $k$, $\gamma$, $B_R\backslash\overline{\Omega}$ and $B_R\backslash\overline{D}$. We finish the proof.
\end{proof}
\begin{proof}[Proof of Theorem \ref{thm:main2}]Using analogous arguments to those presented in the proofs of Theorem \ref{thm:main1} for Case 1 and Case 2, we make the necessary adjustments to accommodate our current scenario. To avoid redundancy, we provide a summary of the proof process specifically tailored for the case where $D$ represents the complex PMC obstacles.

\medskip
 \noindent {\bf Step 1:} Demonstrate the uniqueness of the solution for a modified scattering problem \eqref{MD:tran2-2}, where the boundary conditions on $\partial D$ are revised as follows:
{\small\begin{align*}
\bnu\times \bH\big|_{ \bigcup_{l=1}^{N}\partial D_l\cap\partial D} =\bq\in H^{-1/2}_{\mathrm{div}}\big(\cup_{l=1}^{N}\partial D_{k}\cap\partial D\big).
\end{align*}}
The uniqueness of this modified problem is established using a similar approach as in Theorem \ref{th:unique}.

\medskip
 \noindent {\bf Step 2:} Prove the existence of a solution for the modified scattering problem \eqref{MD:tran2-2} and establish an estimate relating the solution to the boundary data and $\bJ$.
The process is similar to that described in the proof of Theorem \ref{th:exitence1}.

\medskip
\noindent {\bf Step 3:} Derive the related estimates of the the effective medium scattering problems \eqref{ME:tran1} with $D=\cup_{l=1}^N D_l$. By multiplying $\nabla\times(\widetilde{\mu}_r\,\nabla\times \,\bE_\delta)-k^2\widetilde{\varepsilon}_r\, \bE_\delta=\mathrm{i}k\bJ$ by $\overline{\bE}_\delta$ in $D$, we obtain the equation
$$\bigcup_{l=1}^N\left\{\int_{D_l}\mu^{-1}_{D_l}|\nabla\times\bE_\delta|^2\mathrm{d}\bx+\int_{\partial D_l}\delta\bnu\times( \nabla\times \bE_\delta)\cdot\overline{\bE}_\delta\mathrm{d}\sigma-\int_{{D_l}}k^2\varepsilon_{D_l}|\bE_\delta|^2\mathrm{d}\bx\right\}=0.
$$
It is important to emphasize the following transmission conditions on the relevant boundaries:
\begin{itemize}
\item On $\partial D_l\cap\partial D$, the conditions are $\bnu\times\bE_\delta\big|^-_{\partial D_l\cap \partial D}=\bnu\times\bE_\delta\big|^+_{\partial D_l\cap \partial D}$ and $\bnu\times(\mu^{-1}_{D_l}\nabla\times\bE_\delta)\big|^-_{\partial D_l\cap \partial D}=\bnu\times(\mu_r^{-1}\nabla\times\bE_\delta)\big|^+_{\partial D_l\cap \partial D}$.
\item On $\partial D_l\backslash\overline{\partial D}$,  the conditions are $\bnu\times\bE_\delta\big|_{\partial D_l\cap D_j}=\bnu\times\bE_\delta\big|_{\partial D_l\cap\partial D_j}$ and $\bnu\times(\mu^{-1}_{D_l}\nabla\times\bE_\delta)\big|_{\partial D_l\cap \partial D_j}=\bnu\times(\mu^{-1}_{D_j}\nabla\times\bE_\delta)\big|_{\partial D_l\cap \partial D_j}$, where $\overline{D}_l\cap \overline{D}_j\neq \emptyset$ and $l\neq j$, $j=1,2,\cdots,N$.
\end{itemize}
By combining the integrals in $\Omega \backslash D$ and $B_R\backslash\overline{\Omega}$ with the fact that
$$\|\bE_\delta\|^2_{H^1(\mathrm{curl}, D)}=\sum^N_{l=1}\|\bE_\delta\|^2_{H^1(\mathrm{curl}, D_l)},$$
we can derive an inequality similar to \eqref{eq:inter8 2}.
The remaining process is similar to that described in the proof of Lemma \ref{lem:mediu1}.

\medskip
\noindent {\bf Step 4:} Obtain estimates similar to those in Proposition \ref{diff} by following a process similar to the one described in Proposition \ref{diff}.

\medskip
\noindent {\bf Step 5:} Prove  the difference between the two far fields can be controlled by the incident field $\bE^i$ and $\bJ$, indicating that its coefficient is the 1/2 power of $\delta$.
\end{proof}

\section*{Acknowledgements}

The work of H. Diao is supported by National Natural Science Foundation of China  (No. 12371422) and the Fundamental Research Funds for the Central Universities, JLU (No. 93Z172023Z01). The work of H. Liu is supported by the Hong Kong RGC General Research Funds (projects 12302919, 12301420 and 11300821), the NSFC/RGC Joint Research Fund (project  N\_CityU101/21), the France-Hong Kong ANR/RGC Joint Research Grant, A-HKBU203/19. The work of Q. Meng is supported by the Hong Kong RGC Postdoctoral Fellowship Scheme (No. 9061028).


%

\end{document}